\newtheorem{thm}{Theorem}[section]
\newtheorem{cor}[thm]{Corollary}
\newtheorem{lem}[thm]{Lemma}
\newtheorem{prob}[thm]{Problem}
\newtheorem{prop}[thm]{Proposition}
\theoremstyle{definition}
\newtheorem{defn}{Definition}[section]
\newtheorem{exmp}{Example}[section]
\theoremstyle{remark}
\newtheorem{rem}{Remark}[section]
\numberwithin{equation}{section}
\begin{document}

\title[The Grothendieck duality and sparse minimizing
]
{The Grothendieck duality and sparse minimizing  in 
spaces of Sobolev solutions to elliptic systems}

\author{A.A. Shlapunov}
\email{ashlapunov@sfu-kras.ru}  

\author{A.N. Polkovnikov}
%\address[Alexander Polkovnikov]
%{Siberian Federal University
%                                                 \\
%         pr. Svobodnyi 79
%                                                 \\
%         660041 Krasnoyarsk
%                                                 \\
%         Russia}
\email{paskaattt@yandex.ru} 

\author{K.V. Gagelgans}
%\address[\textcolor{red}{Kseniya Gagelgans}]
%{Siberian Federal University
%                                                 \\
%         pr. Svobodnyi 79
%                                                 \\
%        660041 Krasnoyarsk
%                                                 \\
%         Russia}
\email{ksenija.sidorova2017@yandex.ru} 

\address[Alexander Shlapunov, Alexander Polkovnikov, Kseniya Gagelgans]{Siberian Federal University
                                                 \\
         pr. Svobodnyi 79
                                                 \\
         660041 Krasnoyarsk
                                                 \\
         Russia}

%\author{Vitaly Kalinin}
%\address[V. Kalinin]{EP Solutions SA, 
%Avenue des Sciences 13, 1400 Yverdon-les-Bains, Switzerland}
%\email{contact@ep-solutions.ch}

\subjclass{%Primary 
49J45, 49J35, 46A20, 35C15, 35Jxx}
%35A35; Secondary  35N17, 35K25}
\keywords{Sparse minimizing for variational problems, 
%neural networks, 
reproducing kernel Banach spaces, representer theorem, Grothendieck type duality to solutions to 
elliptic operators}

\begin{abstract}
We present an instructive example of using Banach spaces of solutions to (linear, generally, 
non-scalar) elliptic operator $A$ to investigate variational inverse problems related to neural networks 
and/or to regularization of solutions to boundary value problems. More precisely, inspired by 
kernel's method for optimization problems in locally convex spaces, we prove the  existence of the 
so-called sparse minimizers for the related variational problem  and produce a representer theorem where a 
suitable fundamental solution of the operator $A$ is used as a reproducing kernel. The Grothendieck 
type duality for the Sobolev spaces of solutions to elliptic operator $A$ plays an essential role in 
the considerations. The case where the number of data passes to infinity is also discussed. 
Some typical situations related to the standard elliptic operators, the 
corresponding function spaces and fundamental solutions are considered. 
%In particular, these include  
%the case of spaces of polyharmonic functions on bounded domains in ${\mathbb R}^n$, (i.e. solutions to 
%the powers $\Delta^r$, $r \in \mathbb N$, of the Laplacian operator $\Delta$) where fundamental 
%solution is especially simple and the related infinite-dimensional space contains at least all the 
%polynomials of degrees less than $2r$. Spaces of solutions of other types can be provided by the choice 
%of the differential operator $A$ (exponential solutions, polynomial-exponential solutions, etc.)
\end{abstract}

\maketitle

\section*{Introduction}
\label{s.Int}

Theoretical justification of an effective work of neural networks  attracts attention 
of many mathematicians in recent years, see, for instance, %\cite{Bch}, %\cite{BreCar}, 
\cite{PN2, SS, Un20}. %,{Un23B}.
A large part of the papers were devoted to kernel's approach and, in  particular, to the use of 
spaces with reproducing kernels for proving the existence of the so-called sparse minimizers for the 
related variational problem  and a representer theorem providing a general form of the minimizers in 
particular situations, \cite{BCCDGW, BreCar, Carm+, %CS, 
LZZ, PN1, SS, %SZ11, 
SZ13, Un23A, ZXZ}. The most 
constructive answers were obtained within the framework of a Hilbert space ${\mathcal H} (\Omega)$ of functions 
over a set $\Omega \subset {\mathbb R}^n$  with reproducing kernel $ {\mathcal K} (\cdot,\cdot)$ where Riesz theorem 
on the general form of linear functionals plays an essential role. More 
precisely, in this case a sparse  solution to the problem of minimizing a functional 
\begin{equation} \label{eq.min.H}
{\mathcal F}(u)=F({\mathcal M} u)  + \|u\|_{ {\mathcal H}(\Omega)} , \, u \in {\mathcal H} (\Omega)
\end{equation}
with a linear continuous mapping ${\mathcal M}: {\mathcal H} (\Omega) \to {\mathbb R}^N$ and a  convex coercive 
lower semi-continuous functional $F$ over  ${\mathbb R}^N$, is given by a linear combination related 
to the values of the reproducing kernel ${\mathcal K} (x^{(j)},\cdot)$ at distinguished points $x^{(1)}, 
\dots x^{(N)}$ of the training set $\Omega$. On the other hand, functional \eqref{eq.min.H} can be 
considered in a more general context as Tikhonov' regularization of the  ill-posed problem related to 
an operator $\tilde {\mathcal M}:  {\mathcal H} (\Omega) \to \tilde {\mathcal H} (\Omega)  $ and to the 
operator equation 
\begin{equation} \label{eq.ill-posed}
\tilde {\mathcal M} u = f_0
\end{equation} 
with given datum $f_0$ from a function space $\tilde {\mathcal H}(\Omega)$. This  is typical if one looks 
for approximate solutions to boundary value problems for partial differential equations, 
\cite{Tikh77}, \cite{Engl}, \cite{LvRShi}. 
Naturally, the summand $\|\cdot\|_{{\mathcal H}(\Omega)}$ is chosen to play a 
stabilising role for  problem \eqref{eq.ill-posed}.

Unfortunately, the results in Hilbert spaces are diminished by a shortage  of explicitly written 
reproducing kernels. Of course, one could write the reproducing kernel as a series constructed with the use of an 
orthonormal basis in the Hilbert space,  see \cite{Ar}, but the explicit formulas for the kernels are very rare. We 
should mention the Bergman space ${\mathcal O} (D)\cap L^2 (D)$ of holomorphic $L^2 (D)$ functions in a bounded domain $D$ in $n$-dimensional 
complex space ${\mathbb C}^n$ with the Bergman (reproducing) kernel, see \cite{Berg}, the Hardy space 
${\mathcal O}^2 (D)$ of 
holomorphic functions with the Szeg\"o kernels, see \cite{Kran}, or their generalizations on Hilbert 
spaces of solutions to elliptic systems \cite{BerSch} for some specific domains. 

In any case, the applications of Reproducing Kernel Hilbert Spaces to optimization problems have some 
essential limits and hence, further advances in the kernel's method were realized in Reproducing Kernel Banach 
Spaces  or even in locally convex topological vector spaces, for instance, \cite{Bart_et_all, BreCar}. 
For these type of spaces the method is more effective if the related space is reflexive 
and  the Convex Analysis  on compact sets plays an essential role in the construction of a sparse minimizer for 
convex coercive lower semi-continuous functionals.  Of course, these advances are less constructive than in Hilbert 
spaces. However %\cite{Bart_et_all}, 
\cite[Theorem 4.14]{BreCar} produces a nice example of the sparse minimizing  over spaces 
of distributions of a finite order of singularity on a bounded open subset $\Omega$ of the 
$n$-dimensional real space ${\mathbb R}^n$, written as a linear combination of the values 
$\Phi (x^{(j)},\cdot)$  at distinguished points $x^{(1)}, \dots x^{(N)}$ 
of the training set with a fundamental solution $\Phi$ of a  
scalar differential operator $A$ with constant coefficients 
(note that the Theory of Radon measures play the crucial role in the 
exposition). The only disadvantage of this answer is that the minimizer 
was written up to  a solution of finite order of growth to the operator $A$ in $\Omega$.

Inspired by this example, we want to draw attention to a class of 
locally convex topological reproducing spaces $S_A (D) \cap {\mathcal B} (D)$ consisting 
of  solutions to the homogeneous equation 
$$
Au = 0 \mbox{ in } D 
$$
with an elliptic 
$(k\times k)$-matrix differential operator $A$ of order $m \in {\mathbb N}$ 
with coefficients defined on an open set $X \subset {\mathbb R}^n$, 
where $n\geq 2$ and  $D$ is a relatively compact domain in $X$; here the topology is induced 
from a  locally convex topological vector space ${\mathcal B} (D)$ of functions over $D$.

The most typical such a space 
is the Fr\'echet Montel vector space $S_A (D)$, endowed 
with the topology of uniform convergence on compact subsets of $D$.  If both the operators $A$ and 
$A^*$ possess the Unique Continuation Property on $X$ then
the famous Grothendieck duality,  see \cite{Grot1}, provides that the space 
$S_A (D)$ is  Montel Fr\'echet space with the reproducing 
kernel given by a suitable fundamental solution $\Phi$ to the operator $A$, see Proposition 
\ref{p.Montel.1} and Corollary \ref{c.repr.ker} below.

Another typical object is the space $S^{s,p}_A (D)$  of Sobolev solutions 
to the operator $A$ in $D$, i.e. the intersection of $S_A (D)$ with the 
Sobolev space $[W^{s,p} (D)]^k$. This  is a Banach space with reproducing kernel 
for all $s \in \mathbb Z$, $1<p<+\infty$, and we 
obtain the related Grothendieck type dualities in order to identify the kernel as 
a suitable fundamental solution $\Phi$, see Proposition 
\ref{p.Montel.1} and Corollary \ref{c.repr.ker}. 

On this way, the spaces $S_A (D)$ and $S^{s,p}_A (D)$ both fit  to the abstract schemes realized in 
\cite[\S 3]{Bart_et_all} and \cite[\S\S 2 and 3]{BreCar} for sparse minimizing 
(see also examples in \cite[\S 3]{LZZ}. However, in contrast with \cite{Bart_et_all, BreCar}, we prefer 
to work with spaces adapted to the investigation of boundary problem for PDE's. The Sobolev spaces 
$S_A^{s,p} (D)$  are 1) complete, reflexive (Theorem \ref{t.Groth.sp} below) and, at least for 
$s\in {\mathbb Z}_+$, $1<p<+\infty$, they are  uniformly convex,  2) they are used often in the theory 
of boundary value problems and 3) the elliptic boundary and volume potentials are usually acting  
continuously on them (that is important for Grothendieck type dualities). Additionally, for $s=0$, 
$ 1<p<+\infty$, the spaces, (and hence, their duals) are both  uniformly convex and uniformly smooth, 
see  \cite[P. 3, Ch II, \S 1, Proposition 8]{Bz} or  
\cite[Ex. 16.201]{NarBe}. Thus we choose the family of Banach spaces 
$S^{s,p}_A (D)$ to investigate the following minimization problem. 

\begin{prob} \label{pr.A.s.p.D}
Given $s \in \mathbb Z$, $p \in (1,+\infty)$, a linear continuous operator  ${\mathcal M} = {\mathcal M}_N$ 
that maps $S^{s,p}_A (D)$ onto ${\mathbb R}^{N }$ with some $N\in \mathbb N$, and a proper convex 
coercive lower semi-continuous functional $F = F_N : {\mathbb R}^{N } \to (-\infty, +\infty]$,  
find an element $u^{(0)} = u^{(0)} _N \in S^{s,p}_A (D)$ 
such that ${\mathcal F} (u^{(0)})  = \min_{u \in S^{s,p}_A (D) } {\mathcal F} (u)$, where
\begin{equation} \label{eq.F.s.p.D}
{\mathcal F} (u) = {\mathcal F}_N (u) = 
 F ({\mathcal M} u) + b_N \|u \|_{[W^{s,p} (D)]^k}  
\end{equation}
with a positive number $b_N$.
\end{prob}

Please, note that the space $S^{s,p}_A (D)$ is rather large because it is infinite dimensional for $n\geq 2$ 
regardless of the elliptic differential operator $A$. For  instance, for the Cauchy-Riemann operator 
$A=\overline \partial$ over the complex plane ${\mathbb C} \cong {\mathbb R}^2$ we have $k=2$ and the space 
$S^{s,p}_A (D)$ consists of holomorphic functions with some restrictions on growth in the domain $D$ and contains 
at least all the polynomials of the variable $z \in {\mathbb C}$ (but not of the variable 
$\overline z$!). Of course, for the scalar operator $\Delta^r$, $r \in \mathbb N$, (i.e. for 
the powers $\Delta^r$ of the Laplacian operator $\Delta$ in ${\mathbb R}^n$, $n \geq 2$) the space 
$S^{s,p}_A (D)$ consists of polyharmonic functions with certain restrictions on growth 
in the domain $D$ and contains at least all the polynomials of degree $(2r-1)$ of variables 
$(x_1, \dots x_n) \in {\mathbb R}^n$. Spaces of solutions of other types can be provided by the choice 
of the differential operator $A$ (polynomial-exponential solutions, etc.)

Taking into the account Representer Theorems for the Hilbert, Banach and topological spaces from 
\cite{Bart_et_all, BreCar, LZZ} , our 
goals are to prove that, under reasonable assumptions, 

1) there is a  minimizer $u^{(0)}= u^{(0)}_N$ to Problem \ref{pr.A.s.p.D}; 

2) there is a sparse minimizer $u^{\sharp} = u^{\sharp}_{N}$ to Problem \ref{pr.A.s.p.D} 
such that 
\begin{equation} \label{eq.sparse.u}
u^{\sharp} (x)= \sum_{j=1}^{N} \Phi(x,y^{(j)}) \, \vec {C} ^{(j)}
\end{equation} 
with some vectors $\vec {C} ^{(1)}, \dots \vec {C} ^{(N)}$ from ${\mathbb R}^k$ 
and points $ y^{(1)},  \dots y^{(N)}$ from ${\mathbb R}^n \setminus D$ 
where $\Phi (x,y)$ is the Schwartz kernel of the fundamental solution $\Phi$;

3) the sequence $\{ u^{(0)}_N\}$ of minimizers converges weakly to an 'approximate' 
solution to operator equation  \eqref{eq.ill-posed} as $N\to +\infty$. 

The paper is organized as follows. Section \S \ref{s.Grothendieck} is devoted to an adaptation of 
Grothendieck type dualities for spaces of Sobolev solutions to elliptic operators possessing the 
so-called Unique Continuation Property. In Section \S  \ref{s.sparse}  we prove the existence of the 
sparse minimizer $u^{\sharp}_{N}$ to Problem \ref{pr.A.s.p.D} in the form \eqref{eq.sparse.u} 
in the case of a smooth domain $D$ with connected complement and an elliptic differential 
operator $A$ such that both $A$ and its formal adjoint $A^*$ possess the Unique 
Continuation Property. Also, in this section we briefly look at the case where 
the number of data passes to infinity.  In Section \S \ref{s.appl} we discuss some applications 
to  problem related to neural networks and to the ill-posed Cauchy problem 
for elliptic operators  and to the well-posed Dirichlet problem for strongly elliptic operators. Some 
typical examples can be found there, too.

\section{Grothendieck type dualities}
\label{s.Grothendieck}
 
\subsection{Differential operators and function spaces}
\label{s.DO} 

In this  section we shortly recall the notion of differential operators, dualities and related matters. 

Let ${\mathbb R}^n$ be the Euclidean space with the coordinates $x= (x_1, \dots, x_n)$ endowed with 
the inner product $( x,y)_{{\mathbb R}^n} = y^T x  = \sum_{j=1}^n y_j x_j $ where $y^T$ denotes the transposed vector for a vector $y$. As usual, for $q \in [1, +\infty]$, let ${\mathbb R}^{n}_{ q}$ 
be  ${\mathbb R}^{n}$ endowed with the norm 
$$
\|x \|_{{\mathbb R}^{n}_{q } } = 
\left\{
\begin{array}{lll} 
\Big( \sum_{\nu=1}^n  |x_{\nu}|^q\Big)^{1/q}, & {\rm if} & 1\leq  q <+\infty, \\
\max_{1\leq \nu \leq n}  |x_{\nu}|, & {\rm if } &  q = +\infty. 
\end{array}
\right.
$$
As it is  known, all the norms $\|\cdot \|_{{\mathbb R}^{n}_{q } } $ are equivalent on 
${\mathbb R}^{n}$, for the dual space we have $({\mathbb R}^{n}_{q })^* \cong {\mathbb R}^{n}_{q' }$ 
with $1/q+1/q' =1$ and the related pairing is given by $\langle x,y\rangle = y^T x$.
 
We denote by $C(\sigma)$ the space of continuous functions on the set $\sigma \subset {\mathbb R}^n$.
If $\sigma$ is a compact set then we treat $C(\sigma)$ as a Banach space with the standard norm; 
for an open set $U$ we endow the space $C(U)$ with the standard Fr\'echet  topology of the uniform 
convergence on compact subsets of $U$. Then let $C^s ( U)$ be the standard Fr\'echet space of $s$ times 
continuously differentiable functions over an open set $U$ and let $C^s (\overline U)$ be the standard 
Banach space of $s$ times continuously differentiable functions over  $\overline U $ if $U$ relatively 
compact in ${\mathbb R}^n$.  As usual, $C^\infty_0 (U)$ stands for the space of infinitely 
differentiable functions with 
compact supports in the open set $U$,  ${\mathcal D}' (U)$ denotes the space of distributions 
over $U$, and ${\mathcal E}' (U)$ denotes the space of  distributions compactly supported in $U$. 

Next, let $L^p (\sigma)$, $1\leq p  <+\infty$ be the Lebesgue space on the measurable set $\sigma \subset
{\mathbb R}^n$ and let $W^{s,p} (U)$, $s \in {\mathbb Z}$, be the Sobolev spaces over open set 
$U\subset {\mathbb R}^n$. We endow the spaces with the standard norms $\| \cdot \|_{L^p (\sigma)}$, 
$\| \cdot \|_{W^{s,p} (U)}$ for  $s \in {\mathbb N}$, and 
$$
\| v\|_{W^{-s,p} (U)} = 
 \sup_{\|\varphi\|_{W^{s,p} (U)}\leq 1 \atop \varphi \in C^\infty_0 (U)} \Big|
\int_U \varphi(x) v(x) dx \Big| , \,\, s \in {\mathbb N};
$$
so, these are Banach spaces, see \cite{Adams}. As usual, for $s\in \mathbb N$, we denote 
by $W^{s,p}_0 (U)$ the closure of $C^\infty_0 (U)$ in $W^{s,p} (U)$. Thus, $W^{-s,p} (U)$ are dual 
spaces for $W^{s,p}_0 (U)$, $s \in \mathbb N$. We also use the so-called Besov spaces $B^{s,p} (U)$ 
with non-integer $s \in {\mathbb R}\setminus {\mathbb Z}$ that are Banach spaces 
with the standard norms, see \cite{BeINi}, \cite[\S 2.3]{Tri}; actually these are generalization 
of the Sobolev spaces $W^{s,p} (U)$ of integer $s$. 

Given $k \in \mathbb N$, $k\geq 2$, we systematically use the Cartesian product $[{\mathfrak C}]^k$ 
of toloplogical vector space ${\mathfrak C}$ with the component-wise topology; the elements of the 
space $[{\mathfrak C}]^k$ we treat as $k$-columns. 

Let $X$ be an open connected set in ${\mathbb R}^n$, $n\geq 2$. For integers $l,k$ we write 
$\mathrm{Diff}_{m} (X; {\mathbb R}^k \to {\mathbb R}^{l})$ for the space of all the  
$(l\times k)$-matrix 
(linear) partial differential operators of order $\leq m\in {\mathbb Z}_+$ over $X$, i.e. 
$A \in \mathrm{Diff}_{m} (X; {\mathbb R}^k \to {\mathbb R}^l)$ if 
\begin{equation*} 
A=\sum_{|\alpha|\leq m}a_\alpha(x) \partial^\alpha
\end{equation*}
where $a_\alpha(x)$ are $(l \times k)$-matrices of {\it real } $C^\infty (X)$-functions and 
$\partial^\alpha $ are partial derivatives of order $\alpha \in {\mathbb Z}_+^n$. We denote  
by $A^{\ast} \in \mathrm{Diff}_{m} (X; {\mathbb R}^l \to {\mathbb R}^{k})$ the corresponding 
formal adjoint operator
\begin{equation*} 
A^*=\sum_{|\alpha|\leq m} (-1)^{\alpha}  \partial^\alpha \big( a^*_\alpha(x) \cdot\big), 
\end{equation*}
where $a^*_\alpha$ is the adjoint matrix for $a_\alpha$ (actually, transposed, because the components 
of $a_\alpha$ are real in our particular case). 

We write $\sigma  (A)$ for the principal homogeneous symbol of the order $m$ of the operator $A$:
$$
\sigma(A) (x,\zeta) 
=\sum_{|\alpha|= m} a_\alpha(x) (\iota \zeta)^\alpha, \, x\in X, \, \zeta \in {\mathbb R}^n,
$$ 
where $\iota$ is the imaginary unit. 
We recall that $A$ is called (Petrovskii) elliptic on $X$ if $l=k$ and the mapping 
$\sigma  (A) (x,\zeta): {\mathbb C}^k  \to {\mathbb C}^k $ is invertible for all 
$(x,\zeta) \in (X, {\mathbb R}^k) $ with $\zeta\ne 0$, see, for instance, 
\cite[Ch 1, \S 3, Ch. 2, \S 2]{EgShu}. Since $\sigma(A^*) = \sigma^*(A)$, 
then $A$ is elliptic if and only if $A^*$ is elliptic.  

Let $D$ be a bounded domain (i.e. open connected set) in $X$.  Denote by $S_A (D)$ the space 
of all generalized solutions to the operator equation $Au=0$ in $D$. By the elliptic regularity,
$S_A (D) \subset [C^\infty (D)]^k$. Moreover, a priori estimates for solutions to elliptic systems 
imply that $S_A (D)$, when endowed with the topology of the uniform convergence 
on compact subsets in $D$, is a closed subspace of the Fr\'echet space $[C(D)]^k$, i.e.
a locally convex complete metrizable space itself, \cite{Shaef}). 

Now we need a specific property of solutions from the space $S_A (D)$.

\begin{defn} \label{def.US}
One says that $A$ possesses the  Unique Continuation Property (UCP) on $X$ if for any domain 
$D\subset X$ any solution $u \in S_A (D)$, vanishing on a non-empty open set $O \subset D$, is 
identically zero in $D$.
\end{defn}

Everywhere below, we assume that $A$ is elliptic on $X$ and  that both $A$ and $A^*$ possess the 
 Unique Continuation Property on $X$. Under these assumptions, the operator $A$ admits a bilateral 
fundamental solution $\Phi$ on $X$, \cite[\S 2.3]{Tark35}, i.e. for the Schwartz kernel $\Phi (x,y)$ 
of $\Phi$ we have 
\begin{equation}\label{eq.bilateral}
A_x \Phi (x, y) = \delta_{x-y}, \,\, A^*_y \Phi^{\textcolor{red}{*}} (x, y) = \delta_{x-y},
\end{equation}
where $\delta_0$ is the Dirac functional at the origin. In particular, 
$\Phi (x,y)$ is a $(k\times k)$-matrix of $C^\infty ((X\times X)\setminus \{x=y\})$ entries.
As it is known, UCP holds true for any elliptic operator 
with real analytic coefficients. Also scalar second order operators 
with $C^1$ smooth coefficients  possess the UCP, 
see, for instance, \cite{Lan56}. 

Next, we recall that a  Green operator
$G_A (\cdot, \cdot)$ for $A$ on $X$ is a bi-differential operator of order $(m-1)$ on $X$ mapping 
$[C^{m} (X)]^l \times[C^{m} (X)]^k $ to the space  $C^{1} (X,\Lambda^1)$, consisting on exterior $(n-1)$-
differential forms, and satisfying
$$
d \, G_A (g, u) = \Big( g^*  A u - (A^* g)^{*} u  \Big) dx \mbox{ on } X 
\mbox{ for all }   u \in [C^{m} (X)]^k,  g \in [C^{m} (X)]^l .
$$
A Green operator always exists, see \cite{Tark35}, and combined with the Stokes' formula results in the (first) Green formula:
\begin{equation} \label{eq.Green.1}
\int_{\partial D} G_A (g, u) = 
\int_{D} \Big( g^*  A u - (A^* g)^{*} u  \Big) dx
\end{equation}
for any relatively compact Lipschitz  domain $D$ in $X$ and  all 
$g \in [C^{m} (\overline D)]^l$, $u \in [C^{m} (\overline D)]^k $.  
Next, using a (left) fundamental solution $\Phi$ we immediately get the (second) Green formula for any 
solution $u \in S_A(D) \cap [C^{m} (\overline D)]^k$: 
\begin{equation} \label{eq.Green.2}
-\int_{\partial D} G_A (\Phi (x, \cdot), u) = \left\{ \begin{array}{lll} u(x), & x \in D, \\
0, & x \not \in \overline D.
\end{array}
\right. 
\end{equation}

In order to write the Green formulae in  more familiar form we proceed with the following definition.

\begin{defn} \label{def.Dir}
A set of linear differential 
operators $\{B_0,B_1, \dots B_{m-1}\}$ is called a 
$(k\times k)$-Dirichlet system of order $(m-1)$ 
on $\partial D$ if: 
1) the operators are defined in a neighbourhood of $\partial D$; 
2) the order of the differential operator $B_j $ equals to $j$; 
3) the map $ \sigma (B_j) (x,\nu (x)) :{\mathbb C}^k \to {\mathbb C}^k$ 
is surjective for each $x \in \partial D$, where 
$\nu (x)$  denotes the outward normal vector to the hypersurface $\partial D$
at the point $x\in \partial D$.
\end{defn}

\begin{lem} \label{eq.dual.Dir}
Let $m\in \mathbb N$, 
$\partial D\in C^m$,  $A$ be an elliptic differential operator 
of order $m \in \mathbb N$ in a neighbourhood of 
$\overline D$ and $B=\{B_0,B_1, \dots B_{m-1}\}$ be a Dirichlet system of order $(m-1)$ on $
\partial D$. Then there is a  Dirichlet system ${C }^A=\{ C_0^A, 
C_1 ^A,\dots  C_{m-1}^A \}$ 
with bijective symbols $ \sigma (C^{A}_j) (x,\nu (x)) :{\mathbb C}^k \to {\mathbb C}^k$ 
 for each $x \in \partial D$  such that 
for all $u,g \in [C^{m} (\overline D)]^k$ we have  
\begin{equation} \label{eq.Green.M.B}
\int_{\partial D} \Big( \sum_{j=0}^{m-1}({C}^A_{m-1-j} g)^* B_j u 
\Big) d\sigma = \int_{D} \Big( g^*  A u - (A^* g)^{*} u  \Big) dx.
\end{equation}

\end{lem}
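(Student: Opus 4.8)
The plan is to derive the boundary formula \eqref{eq.Green.M.B} from the coordinate-free Green formula \eqref{eq.Green.1}. The starting point is that the bi-differential form $G_A(g,u)$, when restricted to $\partial D$, can be re-expressed in terms of the surface measure $d\sigma$ and the Dirichlet data of $u$. Concretely, along $\partial D$ one has
\begin{equation*}
G_A(g,u)\big|_{\partial D} = \Big( \sum_{j=0}^{m-1} (L_{m-1-j} g)^* B_j u \Big)\, d\sigma
\end{equation*}
for some collection of differential operators $\{L_0,\dots,L_{m-1}\}$ defined near $\partial D$, where $L_j$ has order $j$. This follows because $G_A$ has order $m-1$ as a bi-differential operator, so on the hypersurface the normal derivatives of $u$ of order $\geq m$ never appear, and the tangential derivatives of the $B_j u$ can be integrated by parts along the closed manifold $\partial D$ (no boundary terms), which is exactly what converts the raw expression for $G_A$ into a pairing against $\{B_0 u,\dots,B_{m-1}u\}$. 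The symbol of $L_{m-1-j}$ in the conormal direction is governed by the principal symbol of $G_A$, which in turn is determined by $\sigma(A)$; the ellipticity of $A$ guarantees that $\sigma(A)(x,\nu(x))$ is invertible, and a standard computation (tracking the ``integration by parts in the normal variable'' identity $\sigma(A)(x,\xi) = \sum$ of terms pairing $\sigma(B_j)$ with conormal factors, cf. the construction of a Green operator in \cite{Tark35}) shows the top-order part of $L_{m-1-j}$ has invertible symbol $\sigma(L_{m-1-j})(x,\nu(x))$.

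Next I would upgrade the operators $L_j$ to a genuine Dirichlet system $C^A = \{C_0^A,\dots,C_{m-1}^A\}$. The operators $L_j$ as produced above need not satisfy the order condition literally (lower-order corrections can mix), but because $\sigma(L_j)(x,\nu(x))$ is bijective for each $j$, one can apply an invertible triangular change of basis among $\{L_0,\dots,L_{m-1}\}$ — with coefficients that are smooth functions near $\partial D$, triangular with respect to order — to replace them by operators $C_j^A$ where $C_j^A$ has order exactly $j$ and $\sigma(C_j^A)(x,\nu(x))$ is bijective. This is precisely Definition \ref{def.Dir} together with the extra bijectivity. The same triangular substitution must be applied (in the transposed/dual fashion) to the $B_j$ so that the pairing $\sum_j (C_{m-1-j}^A g)^* B_j u$ is preserved; since the substitution is invertible and triangular, the span of each truncation $\{B_0,\dots,B_r\}$ is unchanged, so the $B_j$ on the right-hand side of \eqref{eq.Green.M.B} are the originally given ones.

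Finally, substituting the identity $G_A(g,u)|_{\partial D} = \big(\sum_{j=0}^{m-1}(C^A_{m-1-j}g)^* B_j u\big)\,d\sigma$ into the left-hand side of the Green formula \eqref{eq.Green.1} yields \eqref{eq.Green.M.B} for all $u,g\in[C^m(\overline D)]^k$, using that $\partial D\in C^m$ makes all the indicated traces and tangential integrations by parts legitimate. I expect the main obstacle to be the bookkeeping in the first paragraph: verifying carefully that $G_A(g,u)$ restricted to the boundary really has the claimed bilinear form with the right orders, and that the normal-direction symbol computation produces bijective $\sigma(C^A_{m-1-j})(x,\nu(x))$ — this is where ellipticity of $A$ is used essentially, and it is the step most sensitive to the precise normalization of the Green operator $G_A$. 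The triangular-substitution step and the final substitution are then essentially formal.
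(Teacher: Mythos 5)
The paper itself does not prove this lemma — it only cites \cite[Lemma~8.3.3]{Tark37} — so your proposal is in effect supplying the argument that the cited reference contains, and your first and third paragraphs sketch that argument correctly: restrict $G_A(g,u)$ to $\partial D$, rewrite the boundary derivatives of $u$ through the Dirichlet data $B_j u$, integrate tangential derivatives by parts on the closed manifold $\partial D$, and read off the $C^A_j$; ellipticity of $A$ in the conormal direction is what makes the resulting normal symbols invertible.

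However, your second paragraph contains a genuine error. You propose to fix up the orders of the $L_j$ by a triangular change of basis among them and to apply ``the same triangular substitution ... to the $B_j$,'' then conclude that the $B_j$ in \eqref{eq.Green.M.B} ``are the originally given ones'' because each truncated span is preserved. That inference is false: two Dirichlet systems with the same truncated spans are not the same operators, and the $B_j$ in the lemma are fixed data that you are not allowed to modify. If you write $L_{m-1-j} = \sum_{i\le m-1-j} T_{ji}\,C^A_i$ and substitute into $\sum_j (L_{m-1-j}g)^* B_j u$, the compensating factor $T_{ji}^*$ lands on the $B_j$ side and genuinely replaces $\{B_j\}$ by $\{\sum_j T_{ji}^* B_j\}$, so the resulting identity is no longer the one asserted. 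Fortunately this correction step is unnecessary: a careful bookkeeping of your first paragraph already yields operators of the correct orders. Each term of $G_A$ has total order $\le m-1$; expressing $\partial^\beta u|_{\partial D}$ through $B_j u$ for $j\le |\beta|$ introduces tangential operators of order $\le |\beta|-j$, and integrating these by parts transfers at most $|\alpha|+|\beta|-j \le m-1-j$ derivatives onto $g$, so $L_{m-1-j}$ automatically has order $\le m-1-j$. The conormal-symbol computation (your invocation of ellipticity) then shows the order is exactly $m-1-j$ with invertible $\sigma(L_{m-1-j})(x,\nu(x))$, i.e.\ $\{L_j\}$ is already the desired Dirichlet system $C^A$. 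Removing the spurious second paragraph and carrying out the order count explicitly makes the proof complete.
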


\begin{proof} See, for instance, \cite[Lemma 8.3.3]{Tark37}.
\end{proof}

A typical $(k\times k) $-Dirichlet system of order $(m-1)$ on a  boundary $\partial D $ of class $C^m$ 
consists of $\{I_k, I_k \frac{\partial}{\partial\nu}, \dots, I_k \frac{\partial^{m-1}}{\partial\nu^{m-1}} \}$
where $I_k$ is the unit $(k\times k)$-matrix and $\frac{\partial^j}{\partial\nu^j} $ 
is $j$-th normal derivative with respect to $\partial D$. 

Next, for a non-open set $K\subset X$ denote by $S_A (K)$ the set of all solutions $u$ in neighbourhoods 
of $K$ in $X$, i.e. $u \in S_A (K)$ if and only if there is an open set $U_u \supset  K$ such 
that $u \in S_A (U_u)$.  As usual, we endow this space with the inductive limit topology 
of the family of spaces $\{ S_A (U_\nu)\}_{\nu \in {\mathbb N}}$ related to any sequence of domains  
$U_{\nu+1}\Subset U_{\nu} $, $\nu \in {\mathbb N}$, containing $ K$. 
Under the Unique Continuation Property, $S_A ( K)$ is a Hausdorff $DF$-space.
Actually, it can be treated as an $LB$-space, i.e. as an inductive limit of Banach spaces 
$\{ S_A (U_\nu)\cap C(\overline U_\nu)\}_{\nu \in {\mathbb N}}$, see \cite[Ch. 2, \S 6.3]{Shaef}. 

Next, given $s \in \mathbb Z$, $1< p < + \infty$ denote by $S^{s,p}_A (D) $ 
the intersection $ S_A (D) \cap [W^{s,p} (D)]^k$, i.e. the space 
of $W^{s,p}(D)$-Sobolev solutions to the operator $A$ in $D$. 
It is well-known to be   a Banach space. 

To extend the second Green formula for elements of $S^{s,p}_A (D) $  we invoke the 
notions of traces and weak boundary values. The Trace Theorem for the Sobolev spaces, see, for instance 
\cite[Ch.~1, \S~8]{LiMa72}, \cite{BeINi} %and \cite{McL00}, 
implies that  
if $\partial D\in C^{s}$, $s\geq j+1\geq 1$,  then 
each operator $B_j$, $0\leq j \leq m-1$, 
induces a bounded linear operator
\begin{equation} \label{eq.trace.B_j}
B_j: [W^{s,p} (D)]^k \to [B^{s-j -1/p,p} (\partial D)]^k, \,\, 1 < p < +\infty.
\end{equation}

As it is known, for the spaces of negative smoothness
we may pass to the so-called weak boundary values, see, for instance, 
\cite[\S\S 9.3, 9.4]{Tark36} or \cite{Roit96} or \cite[Definition 2.2]{ShTaLMS} 
for general elliptic systems satisfying the Unique Continuation Property. Namely,  
if $\partial D\in C^\infty$ then each operator $B_j$, $0\leq j \leq m-1$, 
induces a bounded linear operator, see \cite[Chapters 6, 10]{Roit96}, 
\begin{equation} \label{eq.traces.int}
%\oplus_{j=0}^{m-1} 
B_j: [S_A^{-s,p} (D)]^k \to %\oplus_{j=0}^{m-1} 
[B^{-s-j-1/p,p} (\partial D)], \,
s\in {\mathbb Z}_+, \, 1 < p < +\infty.
\end{equation}  
Now we introduce the linear operator ${\mathcal G}: 
\oplus_{j=0}^{m-1} [B^{s-j-1/p,p} (\partial D)]^k \to S_A^{s,p} (D) $ defined by 
\begin{equation}\label{eq.G}
{\mathcal G} \big( \oplus_{j=0}^{m-1} u_j\big) (x) = 
\left\{
\begin{array}{lll}
- \int_{\partial D} \Big( \sum_{j=0}^{m-1}({C}^A_{m-1-j} \Phi^* (x, \cdot))^* 
u_j  \Big) d\sigma, & m\leq s \in \mathbb N,\\
- \sum_{j=0}^{m-1} \langle  {C}^A_{m-1-j} \Phi^* (x, \cdot)
 , u_j \rangle_{\partial D}, & m>s \in \mathbb Z,\\
\end{array}
\right.
\end{equation}
(here $\langle \cdot, \cdot \rangle_{\partial D}$ stand for the pairing between the space 
$B^{s-j-1/p,p} (\partial D) $ and its dual $B^{j+1/p-s,p'} (\partial D) $); according to \cite[\S 2.3.2.5]{RS82}, 
\cite[\S 2.4]{Tark36}, 
it is bounded.

\begin{thm} \label{t.Green}
Let $D$ be a relatively compact domain in $X$ with $C^\infty$-smooth boundary and $A$ be an elliptic 
operator on $X$  with UCP. Then the (second) Green formula is valid for any 
$u \in S_A ^{s,p} (D)$, $s\in \mathbb Z$, $1<p<+\infty$:
\begin{equation} \label{eq.Green.M.B.2}
-{\mathcal G} \big( \oplus_{j=0}^{m-1} B_j u \big) = 
\left\{ \begin{array}{lll} u(x), & x \in D,\\
0, & x \not \in \overline D.
\end{array}
\right. 
\end{equation}
\end{thm}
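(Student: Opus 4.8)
The plan is to prove Theorem~\ref{t.Green} by first establishing the formula for smooth solutions via density, and then extending it to the full scale $s \in \mathbb{Z}$, $1<p<+\infty$, by a duality/continuity argument. More precisely, I would proceed in three stages, treating the case $m \le s \in \mathbb{N}$ first, then the general case $s \in \mathbb{Z}$.

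\emph{Step 1 (smooth solutions).} For $u \in S_A(D) \cap [C^m(\overline D)]^k$, combine the classical second Green formula \eqref{eq.Green.2} with Lemma~\ref{eq.dual.Dir} applied to $g = \Phi^*(x,\cdot)$ (for fixed $x \in D$ this is a smooth solution of $A^* g = 0$ away from $x$, so the boundary terms make sense after one verifies that the right-hand side of \eqref{eq.Green.M.B} collapses to $\pm u(x)$ or $0$ exactly as in \eqref{eq.Green.2}). This identifies $-\int_{\partial D}\bigl(\sum_{j=0}^{m-1}({C}^A_{m-1-j}\Phi^*(x,\cdot))^* B_j u\bigr)\,d\sigma$ with $u(x)$ for $x \in D$ and with $0$ for $x \notin \overline D$, i.e. \eqref{eq.Green.M.B.2} with the first branch of ${\mathcal G}$.

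\emph{Step 2 (passing to $S_A^{s,p}(D)$ for $m \le s \in \mathbb{N}$).} Here I would use that $S_A(D) \cap [C^m(\overline D)]^k$, or more conveniently $S_A(\overline D)$ (germs of solutions near $\overline D$), is dense in $S_A^{s,p}(D)$ — this follows from a Runge/approximation-type theorem for solutions of elliptic systems with UCP on a domain whose complement is handled appropriately, or at minimum from the fact that solutions in slightly larger domains approximate in $[W^{s,p}(D)]^k$. Both sides of \eqref{eq.Green.M.B.2} are continuous in $u \in S_A^{s,p}(D)$: the right-hand side trivially, and the left-hand side because the traces $B_j: S_A^{s,p}(D) \to [B^{s-j-1/p,p}(\partial D)]^k$ are bounded by \eqref{eq.trace.B_j} and the potential operator ${\mathcal G}$ is bounded by the cited results \cite[\S 2.3.2.5]{RS82}, \cite[\S 2.4]{Tark36}. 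Hence the identity, valid on a dense subset by Step 1, extends by continuity; note one must check that for $x \notin \overline D$ the value ${\mathcal G}(\oplus B_j u)(x)$ is still given by the integral and equals $0$, which again passes to the limit since $\Phi^*(x,\cdot)$ is smooth up to $\partial D$ for such $x$.

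\emph{Step 3 (the case $m > s \in \mathbb{Z}$, in particular $s \le 0$).} Now the traces $B_j u$ live only in the negative-order Besov spaces via the weak boundary values \eqref{eq.traces.int}, and ${\mathcal G}$ is written through the pairing $\langle\,\cdot\,,\,\cdot\,\rangle_{\partial D}$. I would again argue by density of $S_A(\overline D)$ (smooth solutions in neighbourhoods of $\overline D$) in $S_A^{s,p}(D)$, using that on smooth solutions the weak boundary values coincide with the classical traces $B_j u$, so the pairing-form of ${\mathcal G}$ reduces to the integral form and Step 1 applies. Then continuity of both sides — the boundedness of $B_j$ into $[B^{-s-j-1/p,p}(\partial D)]^k$ from \eqref{eq.traces.int} and the boundedness of the potential ${\mathcal G}$ in the pairing realization — yields \eqref{eq.Green.M.B.2} in full generality.

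The main obstacle I anticipate is the density step: one needs that germs of $A$-solutions on $\overline D$ are dense in $S_A^{s,p}(D)$, which is exactly where the hypothesis that $D$ has smooth boundary (and, for the genuinely global statement, connected complement) together with the UCP for $A$ and $A^*$ enters — it is a Runge-type approximation theorem. If that density is not available in the required topology, an alternative is to avoid approximation entirely and instead verify \eqref{eq.Green.M.B.2} by testing against $C_0^\infty$ functions: apply $A_x$ to both sides (the left side produces $\delta$-type contributions from \eqref{eq.bilateral} concentrated on $\partial D$, which vanish in $D$), conclude the difference is an $A$-solution in $D$ and in ${\mathbb R}^n \setminus \overline D$, and then pin it down using its boundary behaviour and UCP; but the density argument is cleaner and is the route I would take first.
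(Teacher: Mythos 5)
Your strategy — prove the formula for smooth solutions (Step~1) and transfer it to $S_A^{s,p}(D)$ by density and continuity (Steps~2--3) — is not the route the paper takes, and as written it has a real gap. The paper proves Theorem~\ref{t.Green} by citing \cite[\S 9.4]{Tark36} and \cite[Theorem~2.4]{ShTaLMS}, where the argument is an \emph{interior} approximation of the domain: one applies the classical Green formula on $D_\varepsilon \Subset D$ with $\partial D_\varepsilon \to \partial D$ and passes to the limit using the very definition and continuity of weak boundary values (this is the same mechanism as in the paper's own Lemma~\ref{l.Green.A*}, done with $\varepsilon<0$). That route makes no hypothesis on $X\setminus D$.

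Your Step~2/Step~3, by contrast, relies on density of $S_A(\overline D)$ in $S_A^{s,p}(D)$, i.e. a Runge-type approximation of \emph{solutions}. The paper itself invokes exactly this kind of density only in Lemma~\ref{l.1}, where it is cited from \cite[Theorems~8.1.2, 8.1.3, 8.2.2]{Tark36} together with the extra hypothesis that $X\setminus D$ has no compact components in $X$. That hypothesis is not part of Theorem~\ref{t.Green}. So either you must show that the density you need holds without it (you flag this as ``the main obstacle'', but you do not resolve it), or you are proving a weaker statement than the theorem claims. A domain such as an annulus already illustrates that the hypothesis has bite and cannot be silently dropped; whatever the truth about density in that case, you have to address it, and the burden is on you.

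There is also a latent circularity worth noting explicitly. For $s<m$ the operators $B_j$ in \eqref{eq.Green.M.B.2} are \emph{weak boundary values}, and in the standard treatments (\cite{Roit96}, \cite{Tark36}, \cite{ShTaLMS}) their existence and boundedness are established hand-in-hand with the very representation formula \eqref{eq.Green.M.B.2} via the jump theorems \eqref{eq.weak.jump}. Using \eqref{eq.traces.int} as a black box and then deducing the Green formula by density is only legitimate if the reference proves \eqref{eq.traces.int} without already passing through the Green formula; you should make that dependence explicit if you follow this route. Your alternative sketch at the end (apply $A_x$ to both sides, identify the difference as an $A$-solution on $D$ and on $X\setminus\overline D$, pin it down by its boundary behaviour and UCP) is closer in spirit to the jump-theorem proof that the paper actually cites, but it is stated too loosely to count as a proof: the ``boundary behaviour'' you need to compare is precisely the weak boundary values, and pinning the difference down requires the jump relations \eqref{eq.weak.jump} rather than UCP alone.
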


\begin{proof} See \cite[\S 9.4]{Tark36}, \cite[Theorem 2.4]{ShTaLMS}.
\end{proof}

Actually, we may characterize the  boundary values (both weak values and  traces) of elements  from 
$S_A ^{s,p} (D)$. 

\begin{lem} \label{l.chract.traces.A}
Let $s\in \mathbb Z$, $1<p<+\infty$. 
For a set $\oplus _{j=0}^{m-1 }u_j \subset 
\oplus _{j=0}^{m-1 } B^{s-j-1/p,p} (\partial D) $ there is an element $u \in S_A ^{s,p} (D)$
satisfying $\oplus _{j=0}^{m-1 }u_j = \oplus _{j=0}^{m-1 } B_ju$  in the sense 
of weak boundary values on $\partial D$ if and only if 
\begin{equation} \label{eq.chract.traces.A}
{\mathcal G} (\oplus_{j=0}^{m-1} u_j) (x)=0 \mbox{ for all } x \not \in \overline D. 
\end{equation}
\end{lem}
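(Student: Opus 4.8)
The plan is to prove Lemma~\ref{l.chract.traces.A} by showing both implications, with the ``only if'' direction being an immediate consequence of Theorem~\ref{t.Green} and the ``if'' direction requiring a construction combined with the Unique Continuation Property. I would begin by fixing $s\in\mathbb Z$, $1<p<+\infty$, and a tuple $\oplus_{j=0}^{m-1}u_j\in\oplus_{j=0}^{m-1}B^{s-j-1/p,p}(\partial D)$.

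\textbf{Necessity.} Suppose $u\in S_A^{s,p}(D)$ satisfies $B_j u = u_j$ on $\partial D$ for $0\le j\le m-1$ in the sense of weak boundary values. Then by the second Green formula \eqref{eq.Green.M.B.2} of Theorem~\ref{t.Green}, applied with this $u$, we have $-{\mathcal G}\big(\oplus_{j=0}^{m-1}u_j\big)(x) = -{\mathcal G}\big(\oplus_{j=0}^{m-1}B_j u\big)(x) = 0$ for all $x\notin\overline D$, which is exactly \eqref{eq.chract.traces.A}. So this direction is essentially a one-line deduction.

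\textbf{Sufficiency.} Assume \eqref{eq.chract.traces.A} holds. The natural candidate is to set $u(x) := -{\mathcal G}\big(\oplus_{j=0}^{m-1}u_j\big)(x)$ for $x\in D$. First I would check that $u\in S_A^{s,p}(D)$: the operator ${\mathcal G}$ is bounded into $S_A^{s,p}(D)$ by the mapping property stated after \eqref{eq.G} (citing \cite[\S 2.3.2.5]{RS82}, \cite[\S 2.4]{Tark36}), so $u$ lies in the Sobolev space and satisfies $Au=0$ in $D$ because each column of $\Phi^*(x,\cdot)$, and hence the boundary potential built from the Dirichlet system ${C}^A$, is annihilated by $A_x$ away from the support, i.e. on $D$ where $x$ ranges and the integration is over $\partial D$. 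Next I must verify that $u$ has the prescribed weak boundary values: $B_j u = u_j$ on $\partial D$ for each $j$. This is where \eqref{eq.chract.traces.A} is used crucially — it guarantees that the potential ${\mathcal G}(\oplus u_j)$ vanishes on the exterior, so the jump relations for the boundary layer potentials associated with the Dirichlet system $\{B_j\}$ and its Green-dual system $\{{C}^A_j\}$ collapse to exactly the traces $u_j$. Concretely, one invokes the jump formulae for the Green (double-layer-type) potentials, which compute $B_j\big({\mathcal G}(\oplus u_\ell)\big)^{\mathrm{int}} - B_j\big({\mathcal G}(\oplus u_\ell)\big)^{\mathrm{ext}}$ in terms of $u_j$ via the biorthogonality in the Green formula \eqref{eq.Green.M.B}; since the exterior trace is zero, the interior trace is $-u_j$ (up to the sign in the definition of ${\mathcal G}$), giving $B_j u = u_j$. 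I would cite \cite[\S 9.4]{Tark36} and \cite[Theorem 2.4]{ShTaLMS} for these jump relations, exactly as in the proof of Theorem~\ref{t.Green}.

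\textbf{Main obstacle.} The delicate point is the rigorous justification of the trace/jump computation in the low-smoothness regime $s<m$ (in particular $s\le 0$), where $B_j$ acts on $S_A^{-s,p}(D)$-type spaces via weak boundary values \eqref{eq.traces.int} rather than classical traces \eqref{eq.trace.B_j}, and where the boundary potential is a duality pairing against ${C}^A_{m-1-j}\Phi^*(x,\cdot)$ rather than an honest integral. Here one needs the functional-analytic machinery of weak boundary values for elliptic systems with UCP from \cite{Roit96} and \cite{ShTaLMS}: the jump relations must be reinterpreted as identities between continuous linear functionals on the relevant Besov spaces, and density of smooth data (or the boundedness of ${\mathcal G}$ together with continuity of the trace operators) is used to reduce to the classical case $s\ge m$ where \eqref{eq.Green.M.B} applies pointwise. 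Uniqueness of the $u$ so constructed — should it be wanted — follows from UCP for $A$: if two solutions in $S_A^{s,p}(D)$ share all weak boundary values, their difference has zero Cauchy data, hence extends by zero across $\partial D$ and vanishes by UCP; but the statement as given only asserts existence, so this is not strictly needed.
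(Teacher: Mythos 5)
Your proof is correct and follows essentially the same route as the paper's: necessity is the one-line deduction from Theorem~\ref{t.Green}, and sufficiency is obtained by taking the boundary Green potential ${\mathcal G}(\oplus_j u_j)$ restricted to $D$ as the candidate solution and invoking the theorem on weak jumps of Green-type integrals (\cite[Lemma~2.7]{ShTaLMS}, \cite[\S 10.1.2]{Tark36}) together with the vanishing of the exterior potential to identify the interior boundary values with the prescribed $u_j$. Your additional remarks on the mapping properties of ${\mathcal G}$ and on the low-smoothness regime are the right technical concerns, and the paper handles them implicitly through the same references; the only small difference is a sign convention for the candidate $u$, which you correctly flag as determined by the sign in the definition of ${\mathcal G}$.
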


\begin{proof} Cf. also \cite[\S 10.3.4]{Tark36}. 
Indeed, the necessity of \eqref{eq.chract.traces.A} follows immediately from 
formula \eqref{eq.Green.M.B.2}. Back, if \eqref{eq.chract.traces.A} holds true then 
we invoke the Theorem on weak jumps of Green type integrals, see \cite[Lemma 2.7]{ShTaLMS} or 
\cite[\S 10.1.2]{Tark36}:
\begin{equation} \label{eq.weak.jump}
\langle g_i, B_i ({\mathcal G}(\oplus_{j=0}^{m-1} u_j))^-_{|\partial D} -
B_i ({\mathcal G}(\oplus_{j=0}^{m-1} u_j))^+_{|\partial D} \rangle _{\partial D} = 
\langle g_i,u_i \rangle_{\partial D} 
\end{equation}
for all $ g_i \in  [C^{\infty} (\partial D)]^k$, 
where ${\mathcal G}(\oplus_{j=0}^{m-1} u_j))^-$ stands for the restriction of 
${\mathcal G}(\oplus_{j=0}^{m-1} u_j)$ to $D$ and
${\mathcal G}(\oplus_{j=0}^{m-1} u_j))^+$ is the restriction of 
${\mathcal G}(\oplus_{j=0}^{m-1} u_j)$ to $X\setminus \overline D$. Thus, 
combining \eqref{eq.chract.traces.A} and \eqref{eq.weak.jump} we see that 
$$
B_i {\mathcal G} (\oplus_{j=0}^{m-1} u_j)^{-}_{|\partial D} = u_i, \, 0\leq i \leq m-1,
$$
that was to be proved.
\end{proof}

\begin{rem} \label{r.norm.A}
In particular,  by Green formula \eqref{eq.Green.M.B.2} and the continuity 
of Green integrals \eqref{eq.G} and boundary operators \eqref{eq.traces.int}, we see that 
 the functional
\begin{equation} \label{eq.norm.A}
\|u \|_{s,p,B} = 
\Big( \sum_{j=0}^{m-1 }\|B_{j} u \|^p_{B^{s-j-1/p,p} (\partial D)} \Big)^{1/p}
\end{equation}
defines a norm on the space $S_A ^{s,p} (D)$, equivalent to the original one. 
\end{rem}

Now we recall that a solution $u \in S_A(D)$ has a finite order of growth near $\partial D$, 
if for each point $x^{(0)} \in \partial D$ there are positive numbers $\gamma$, $C$ and $R$ such that 
$$
|u (x) | \leq C |x-x^{(0)}|^{-\gamma} \mbox{ for all } x \in D, |x-x^{(0)}|<R. 
$$
The space of such functions we denote by $S_A^{(F)} (D)$. 

\begin{cor} \label{c.Green}
Let $D$ be a relatively compact domain in $X$ with $C^\infty$-smooth boundary and $A$ be an elliptic 
operator on $X$  with UCP. Then  
\begin{equation}\label{eq.SF}
S_A ^{(F)} (D)  = 
\cup_{s\in {\mathbb Z}} \, S_A^{s,2} (D) = \cup_{s\in {\mathbb Z} \atop 1<p<+\infty} \, S_A^{s,p} (D)
\end{equation} 
and the (second) Green formula \eqref{eq.Green.M.B.2} is still valid for any 
$u \in S_A ^{(F)} (D)$. 
\end{cor}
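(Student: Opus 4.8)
The plan is to prove the two non‑trivial set inclusions behind \eqref{eq.SF}, namely
\[
\bigcup_{s\in\mathbb Z,\,1<p<\infty} S_A^{s,p}(D)\ \subseteq\ S_A^{(F)}(D)
\qquad\text{and}\qquad
S_A^{(F)}(D)\ \subseteq\ \bigcup_{s\in\mathbb Z} S_A^{s,2}(D),
\]
since $\bigcup_{s\in\mathbb Z} S_A^{s,2}(D)\subseteq\bigcup_{s,p}S_A^{s,p}(D)$ holds trivially; together these three inclusions force the equality of all three sets, after which formula \eqref{eq.Green.M.B.2} for an arbitrary $u\in S_A^{(F)}(D)$ is read off immediately from Theorem \ref{t.Green}. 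Throughout I write $\rho(x)=\mathrm{dist}(x,\partial D)$ and use that $S_A(D)\subset[C^\infty(D)]^k$ by elliptic regularity, so that every solution is locally bounded in $D$, and that, $\partial D$ being $C^\infty$, $\int_D\rho(x)^\tau\,dx<+\infty$ whenever $\tau>-1$.

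\textbf{Step 1 (Sobolev solutions have finite order of growth).} Fix $u\in S_A^{s,p}(D)$ and a point $x\in D$ with $r=\rho(x)$ small. Rescaling the ball $B(x,r/2)\subset D$ to the unit ball via $v(z)=u(x+(r/2)z)$, the vector $v$ solves $A_rv=0$ on $B(0,1)$, where $A_r$ is an elliptic operator whose principal symbol is $\sigma(A)(x+(r/2)z,\zeta)$ and whose lower‑order coefficients carry positive powers of $r$; hence the family $\{A_r\}_{0<r\le r_0}$ is uniformly elliptic with uniformly bounded $C^\infty$‑coefficients. The interior a~priori estimates for solutions of elliptic systems therefore give $|u(x)|=|v(0)|\le C\,\|v\|_{[C^0(B(0,1/2))]^k}\le C'\,\|v\|_{[W^{s,p}(B(0,1))]^k}$ with $C,C'$ independent of $x$ and $r$, and a change of variables turns the last norm into $C''r^{-\gamma}\|u\|_{[W^{s,p}(D)]^k}$ with $\gamma=n/p+\max(0,-s)$. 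Covering $\partial D$ by finitely many such balls we conclude $|u(x)|\le C_0\,\rho(x)^{-\gamma}$ near $\partial D$, i.e.\ $u\in S_A^{(F)}(D)$.

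\textbf{Step 2 (finite growth yields a negative Sobolev solution).} Let $u\in S_A^{(F)}(D)$. By compactness of $\partial D$ and boundedness of $\rho$ on $D$ there are $C_0>0$ and an integer $s\in\mathbb N$ with $|u(x)|\le C_0\,\rho(x)^{-s}$ on $D$; hence $\rho^{s}u\in[L^2(D)]^k$ because $\int_D\rho^\tau\,dx<+\infty$ for $\tau>-1$. Using the higher‑order Hardy inequality $\|\rho^{-s}\varphi\|_{L^2(D)}\le C\,\|\varphi\|_{W^{s,2}(D)}$, valid for $\varphi\in W^{s,2}_0(D)$ on a bounded Lipschitz domain and in particular on our $C^\infty$‑domain $D$, I would estimate, for each $\varphi\in[C^\infty_0(D)]^k$,
\[
\bigl|\int_D\varphi^{*}(x)\,u(x)\,dx\bigr|
\ \le\ \|\rho^{s}u\|_{[L^2(D)]^k}\,\|\rho^{-s}\varphi\|_{[L^2(D)]^k}
\ \le\ C\,\|\rho^{s}u\|_{[L^2(D)]^k}\,\|\varphi\|_{[W^{s,2}(D)]^k}.
\]
Thus $u$ extends to a continuous linear functional on $[W^{s,2}_0(D)]^k$, i.e.\ $u\in[W^{-s,2}(D)]^k$ with respect to the norm introduced in \S\ref{s.DO}; being a solution of $Au=0$ it lies in $S_A^{-s,2}(D)\subseteq\bigcup_{t\in\mathbb Z}S_A^{t,2}(D)$. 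This closes the chain of inclusions and proves \eqref{eq.SF}.

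\textbf{Step 3 (Green formula) and the main difficulty.} Given $u\in S_A^{(F)}(D)$, by \eqref{eq.SF} it belongs to $S_A^{s,p}(D)$ for some $s\in\mathbb Z$, $1<p<\infty$; its boundary data $B_ju$ are then well defined via \eqref{eq.trace.B_j}--\eqref{eq.traces.int}, and \eqref{eq.Green.M.B.2} holds by Theorem \ref{t.Green}. I expect Step~2 to be the delicate point: one must be sure that a solution with merely power growth in $1/\rho$ genuinely defines a distribution of finite negative Sobolev order on $D$, and this rests on the precise vanishing rate of $W^{s,2}_0(D)$‑functions at $\partial D$ encoded in the higher‑order Hardy inequality, which is available here thanks to the smoothness of $\partial D$. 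The uniformity in $r$ of the elliptic estimates in Step~1 is routine but must be stated carefully, since it is exactly what makes the growth exponent $\gamma$ independent of the base point.
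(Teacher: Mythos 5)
Your proof is correct in outline and takes a genuinely different route from the paper. The paper's proof is short precisely because it leans on the heavy machinery already developed in the cited references: it invokes the characterization of $S_A^{(F)}(D)$ (from \cite[Chapter 9]{Tark36} / \cite[Theorem 2.6]{ShTaLMS}) as the set of solutions admitting distributional weak boundary values $B_j u$ on $\partial D$, observes that a distribution on the compact manifold $\partial D$ has finite order (hence lies in some $\oplus_j B^{s-j-1/2,2}(\partial D)$), and then reads off $u \in S_A^{s,2}(D)$ from the Green-integral representation \eqref{eq.Green.M.B.2} together with the continuity of the operator ${\mathcal G}$ in \eqref{eq.G}; $\cup_s S_A^{s,2}(D) = \cup_{s,p} S_A^{s,p}(D)$ is then dispatched by Sobolev embedding. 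Your argument instead works directly with the pointwise $\rho^{-\gamma}$ growth bound that defines $S_A^{(F)}(D)$: interior elliptic estimates plus a scaling argument give the inclusion $S_A^{s,p}(D) \subseteq S_A^{(F)}(D)$, while a higher-order Hardy inequality converts a uniform power-growth bound $|u| \lesssim \rho^{-s}$ into the statement that $u$ is a bounded functional on $W^{s,2}_0(D)$, i.e.\ $u \in S_A^{-s,2}(D)$. What the paper's route buys is a tight link to the Green-integral picture used throughout the paper (the same machinery that underlies Theorem \ref{t.Groth.sp} and Proposition \ref{p.Montel.1}); what your route buys is self-containedness — it avoids the weak-boundary-value theory entirely and uses only interior estimates and Hardy. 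Two small points you should firm up if you wanted to promote this to a full proof: the uniform-in-$r$ interior estimate in Step~1 deserves a precise citation (for $s<0$ it involves iterating elliptic interior regularity until one can apply Sobolev embedding), and the higher-order Hardy inequality $\|\rho^{-s}\varphi\|_{L^2(D)} \le C\|\varphi\|_{W^{s,2}(D)}$ for $\varphi \in W^{s,2}_0(D)$ on a bounded $C^\infty$ domain should be referenced rather than asserted, since it is the load-bearing step of your argument — you correctly flag this yourself.
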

 
\begin{proof} %See \cite[Chapter 9]{Tark36}, \cite{ShTaLMS}, \cite{Roit96}. 
Indeed, according to 
\cite[Chapter 9]{Tark36} (or \cite[Theorem 2.6]{ShTaLMS}), 
a vector $u$ belongs to $S_A ^{(F)} (D)$ if and only if $u$ admits distributional 
weak boundary values $u_j$ for $B_j u$, $0\leq j \leq m-1$. Moreover, in this case $u$ can be presented by 
Green formula \eqref{eq.Green.M.B.2}. Then the statement follows because $\partial D$ is compact and hence,  
any distribution on it has a finite order of singularity, i.e. 
for each $u \in S_A^{(F)} (D)$ there is $s\in \mathbb Z$ such that 
$\oplus _{j=0}^{m-1}B_j u \in \oplus _{j=0}^{m-1}B^{s-1/j-1/2,2} (\partial D)$.  
In particular, $S_A ^{(F)} (D)  = \cup_{s\in {\mathbb Z}} \, S_A^{s,2} (D)$.
The second identity in \eqref{eq.SF} follows from the Sobolev Embedding Theorems.
\end{proof}

Thus, we may endow the space $S_A ^{(F)} (D)$ with the inductive limit topology with respect to the family 
$\{S^{s,2}_A (D)\}_{s\in {\mathbb Z}}$ of Banach spaces (an $LB$-space), see, for instance, \cite[\S 6.3]{Shaef}. 
According to \cite[Ch. 4, Exercise 24e]{Shaef},  $S_A ^{(F)} (D)$ is a ${\rm DF}$-space. 

At the end of this section we discuss spaces with reproducing kernel.

\begin{defn}
A topological vector space ${\mathcal B} = {\mathcal B} (D)$ of $k$-vector functions over the domain $D$ 
is called a space with reproducing kernel if 
\begin{enumerate}
\item[(1)] 
the vector space $\mathcal B$  is endowed with the point-wise operations of sum and
multiplication by a scalar;
\item[(2)]
the evaluation operators ${\mathcal B} \ni u \to u(x) \in {\mathbb R}^k$ (functionals if $k=1$) are linear and continuous 
on ${\mathcal B}$ for each $x \in D$.
\end{enumerate}
\end{defn}
This means that for each $x \in D$ the evaluation functionals 
${\rm ev}^{(j)}_x u = u_j(x)$, $1\leq j \leq k$,  belong to  
topological dual space ${\mathcal B}^*$ and for the operator ${\rm ev}_x = 
({\rm ev}^{(1)}_x, \dots {\rm ev}^{(k)}_x)^T$  we have 
\begin{equation} \label{eq.RK.TVS}
u (x) = \langle {\rm ev}_x, u\rangle _{{\mathcal B}^*,{\mathcal B}} \mbox{ for all } u \in {\mathcal B},
\end{equation} 
where $\langle \cdot, \cdot \rangle _{{\mathcal B}^*,{\mathcal B}}$ is the pairing 
between ${\mathcal B}^*$ and ${\mathcal B}$. Thus, a reproducing kernel ${\mathcal K} (x ,\cdot)$ 
is presented essentially by the evaluation operator  ${\rm ev}_x$ via formula \eqref{eq.RK.TVS}.

We emphasize that  the operator ${\rm ev}_x $  depend essentially on the pairing 
$\langle \cdot, \cdot \rangle _{{\mathcal B}^*,{\mathcal B}}$. Of course, if 
${\mathcal B}$ is a (separable) Hilbert space then the Riesz theorem grants the canonical 
pairing given by the inner product $(\cdot,\cdot)_{{\mathcal B}}$; in this case 
the reproducing kernel ${\mathcal K} (x,y)$ representing the operator ${\rm ev}_x $  
may be defined as 
\begin{equation} \label{eq.RK.HS}
{\mathcal K} (x,y) = \sum_{\nu=1}^{\dim{{\mathcal B}}} b_\nu (x) \otimes b_\nu (y), \, x,y\in D.
\end{equation} 
with any orthonormal basis $\{ b_\nu\}_{\nu=1}^{\dim{{\mathcal B}}} $ in ${\mathcal B}$, \cite{Ar}. 
In particular, in this case we have
$$
u (x) = ({\mathcal K} (x,\cdot), u )_{{\mathcal B}} \mbox{ for all } u \in {\mathcal B} 
\mbox{ and all } x \in D.
$$
For non-Hilbert spaces there are no such a simple-looking formulae for the reproducing kernel and 
the action of the evaluation functional. But even for Hilbert spaces with reproducing kernel 
formula \eqref{eq.RK.HS} is rarely  written in an accomplished explicit form. 
The exceptions are the Szeg\"o and Bergman kernels for Hardy and Bergman spaces, respectively, of 
holomorphic functions in domains of a special kind, see \cite{Berg}, \cite{BerSch}, \cite{Kran}.
However, \cite{Bart_et_all} gives an example of a class of {\it integral} Reproducing Kernel Banach Spaces
where the theory of the Radon measure allows to produce some reasonably looking reproducing kernels.

To formulate the next statement we recall that 
a Montel space is a barrelled topological vector space where any bounded closed set is compact.

\begin{prop} \label{p.Montel.1} The spaces $S_A (D)$ and $S_A (D)\cap [C^\infty (\overline D)]^k$ are 
Montel Fr\'echet spaces with reproducing kernels, 
the spaces $S_A (\overline D)$, $S^{(F)}_A (D)$ are Montel $DF$-spaces with reproducing kernels, 
$ S_A ^{s,p} (D)$, $s \in \mathbb Z$, $1< p < + \infty$, are Banach spaces  
with reproducing kernel, $ S_A ^{s,2} (D)$, $s \in \mathbb Z$, are Hilbert spaces with 
reproducing kernels.
\end{prop}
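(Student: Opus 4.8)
The plan is to treat the list space by space, reducing each assertion to two facts: the stated linear--topological type (Fr\'echet, Banach, Hilbert, or $DF$) and, for every admissible point $x$, the continuity of the evaluation operators $\mathrm{ev}_x$. Once both are in hand the reproducing--kernel property is exactly the definition recalled above, with the kernel $\mathcal K(x,\cdot)$ being the element of the topological dual that represents $\mathrm{ev}_x$ through \eqref{eq.RK.TVS} (and through \eqref{eq.RK.HS} in the Hilbert case). Identification of this kernel with a fundamental solution $\Phi$ is a separate matter, handled in Corollary \ref{c.repr.ker}.

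First I would settle the linear--topological type. The space $S_A(D)$ is, as already recalled, a closed subspace of the Fr\'echet space $[C(D)]^k$; likewise $S_A(D)\cap[C^\infty(\overline D)]^k$ is a closed subspace of the Fr\'echet space $[C^\infty(\overline D)]^k$, and $S_A^{s,p}(D)=S_A(D)\cap[W^{s,p}(D)]^k$ is a closed subspace of the Banach space $[W^{s,p}(D)]^k$ --- closedness in all three cases because $A$ maps the ambient space continuously into a space of distributions (for the Sobolev case, boundedly into $[W^{s-m,p}(D)]^k$, using $C^\infty$ coefficients), so that a convergent sequence of solutions has a solution as its limit. For $p=2$ the ambient Sobolev space is Hilbert, whence $S_A^{s,2}(D)$ is Hilbert. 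Finally $S_A(\overline D)$ and $S_A^{(F)}(D)$ were already described as countable inductive limits of the Banach spaces $S_A(U_\nu)\cap C(\overline U_\nu)$ and $S_A^{s,2}(D)$ respectively (the latter by Corollary \ref{c.Green}) with injective linking maps, hence are $DF$-spaces by \cite[Ch.~4, Exercise~24e]{Shaef}.

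Next I would establish the Montel property. For $S_A(D)$ one shows directly that a bounded closed set $M$ is compact: boundedness gives $\sup_{u\in M}\|u\|_{[C(L)]^k}<+\infty$ for every compact $L\subset D$, and the interior a priori estimates for the elliptic system $Au=0$ upgrade this to a bound on $\sup_{u\in M}\|u\|_{[C^1(L')]^k}$ for any compact $L'$ with $L'\Subset D$, so $M$ is equicontinuous on compacta; Arzel\`a--Ascoli together with a diagonal argument over an exhaustion of $D$ gives relative compactness in $[C(D)]^k$, and $M$ is closed. The space $S_A(D)\cap[C^\infty(\overline D)]^k$ is a closed subspace of $[C^\infty(\overline D)]^k$, which is Montel (a projective limit of the Banach spaces $[C^l(\overline D)]^k$ with compact linking maps, by Arzel\`a--Ascoli), hence Montel. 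For the two $DF$-spaces it suffices to note that the linking maps are compact --- restriction of a $C(\overline U_\nu)$-bounded family of solutions to the smaller closure $\overline U_{\nu+1}$ is equicontinuous by the same interior estimates, and the embeddings $S_A^{s,2}(D)\hookrightarrow S_A^{s-1,2}(D)$ are compact by the Rellich theorem --- so both are inductive limits of Banach spaces with compact linking maps, i.e.\ Montel $DF$-spaces, see \cite[Ch.~4]{Shaef}.

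Finally, the continuity of the evaluation operators. For $S_A(D)$, $S_A(D)\cap[C^\infty(\overline D)]^k$, $S_A(\overline D)$ and $S_A^{(F)}(D)$ this is immediate: in the first two, point evaluation is continuous for the compact--open, respectively the $C^\infty(\overline D)$, topology; in the last two the point in question is interior to every defining open set $U_\nu$ (respectively lies in $D$, on which every $S_A^{s,2}(D)$ sits), so $\mathrm{ev}_x$ is continuous at each stage of the inductive limit and hence on the limit. The one case requiring work is $S_A^{s,p}(D)$ with small or negative $s$, where elements of $[W^{s,p}(D)]^k$ need not be functions; but by elliptic interior regularity $S_A^{s,p}(D)\subset S_A(D)\subset[C^\infty(D)]^k$, so $u(x)$ is defined for $x\in D$, and the interior a priori estimate yields, for a ball with $\overline{B(x,2r)}\subset D$,
\[
|u(x)|\ \le\ \|u\|_{[C(\overline{B(x,r)})]^k}\ \le\ C(x)\,\|u\|_{[W^{s,p}(B(x,2r))]^k}\ \le\ C(x)\,\|u\|_{[W^{s,p}(D)]^k},
\]
so $\mathrm{ev}_x^{(j)}\in (S_A^{s,p}(D))^*$ for $1\le j\le k$; alternatively, the Green formula \eqref{eq.Green.M.B.2} writes $u(x)=-\mathcal G(\oplus_j B_j u)(x)$ with kernels $C^A_{m-1-j}\Phi^*(x,\cdot)$ smooth on $\partial D$ for $x\in D$, and continuity of $\mathrm{ev}_x$ then follows from \eqref{eq.traces.int} and the boundedness of $\mathcal G$ (cf.\ also Remark \ref{r.norm.A}). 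I expect this last point --- continuity of point evaluations on the Sobolev spaces of solutions of negative order --- to be the main obstacle, together with the need to extract the Montel property of the inductive-limit spaces from the interior elliptic estimates (or Rellich's theorem) in the form of compactness of the linking maps.
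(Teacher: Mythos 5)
Your proof is correct and follows essentially the same route as the paper's: you identify each space as a closed subspace of a suitable Fréchet/Banach space or as a countable inductive limit of Banach spaces, derive the Montel property from interior elliptic estimates and compactness of linking maps, and show continuity of evaluations either from the interior a priori estimate or from the Green-integral representation with the boundary operators $B_j$ (the paper uses the latter, equation \eqref{eq.RK.sp}). One small point where you are more careful than the printed proof: for $S_A(\overline D)$ and $S_A^{(F)}(D)$ to be \emph{Montel} $DF$-spaces it is not enough that they be countable inductive limits of Banach spaces; one really needs the compactness of the linking maps (so that they are $(LS)$-spaces), which you supply via interior estimates and Rellich, whereas the paper only gestures at \cite[Ex.~24]{Shaef}.
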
 

\begin{proof} Obviously, for each $x \in D$ and all $u \in S_A (D)$
$$
|u(x)|\leq \|u\|_{[C (K)]^k}
$$
for any compact set $K\subset D$ containing $x$. Hence, the Dirac functional 
$\delta_x u \to u(x)$ is linear and continuous on the Fr\'echet space $S_A (D)$ with the topology 
of the uniform convergence on compact subsets of $D$. Let us see that $S_A (D)$ is a Montel-Fr\'echet space. 
As it is known in metric spaces the (Heine--Borel) compactness 
and sequential compactness are equivalent, so we use the sequential compactness. By the elliptic regularity,
(one also may use the second Green formula \eqref{eq.Green.2}, with this purpose), 
we conclude that 
for each domains $D_1 \Subset D_2 \Subset D$ we have 
$$
\|u\|_{C^s (\overline D_1)} \leq C_s(D_1,D_2) \|u \|_{C (\overline D_2)}
$$
for all $u \in S_A (D)$. This means that $S_A (D)$, endowed with 
the topology of $[C(D)]^k$ is actually a closed subset in Fr\'echet space $C^\infty (D)$ endowed 
with the topology of convergence on compact subsets of $D$ with all the partial derivatives 
of arbitrary high orders. As it is known,  the  Fr\'echet  topology in $[C^\infty (D)]^k$  may be   
induced by the family of semi-norms  $\{ \|\cdot\|_{[C^\nu(\sigma_\nu)]^k}\}_{\nu \in \mathbb N}$ 
where $\sigma _\nu =\overline U_\nu $ for open sets $U_\nu \Subset U_{\nu+1}\Subset D$ with 
$\cup_{\nu \in \mathbb N} U_\nu = D$. A bounded set $M$ in $[C^\infty (D)]^k$  is a set bounded with respect 
to each semi-norm $ \|\cdot\|_{[C^\nu(\sigma_\nu)]^k}$. As the embeddings
$$
[C^{\nu+1}(\sigma_{\nu+1})]^k \to [C^\nu(\sigma_\nu)]^k
$$ 
are compact for all $\nu \in \mathbb N$
we conclude that $[C^\infty (D)]^k$ is a Montel-Fr\'echet space, see \cite[p. 75]{RobRob} for spaces 
of functions of one variable. If $M$ is a bounded closed set in $S_A (D)$ then it is bounded closed set in  
$[C^\infty (D)]^k$ and hence, it is a compact in $[C^\infty (D)]^k$. However, the limit point of $M$ again 
belong to $S_A (D)$ because space's topology.    Thus, $S_A (D)$ is a Montel space, \cite{Shaef}.

Similarly, 
for each $x \in D$ and all $u \in S_A (D) \cap [C^\infty (\overline D)]^k$ we have 
$$
|u(x)|\leq \|u\|_{[C (\overline D)]^k}
$$
Hence, the Dirac functional 
$\delta_x u \to u(x)$ is linear and continuous on the Fr\'echet space 
$S_A (D) \cap [C^\infty (\overline D)]^k$ with the topology induced by 
semi-norms $\{ \|\cdot\|_{[C^\nu(\overline D)]^k}\}_{\nu \in \mathbb N}$. 
 As the embeddings
$$
[C^{\nu+1}(\overline D)]^k \to [C^\nu(\overline D)]^k
$$ 
are compact for all $\nu \in \mathbb N$
we conclude that $[C^\infty (D)]^k$ is a Montel Fr\'echet space, 
see \cite[p. 75]{RobRob} for spaces 
of functions of one variable. Hence, we may conclude that $S_A (D) \cap [C^\infty (\overline D)]^k$ 
is a Montel-Fr\'echet space,  as for the space 
$S_A (D)$.
 
The space $S_A (\overline D)$ is a Montel space as the inductive limit of the Banach 
spaces  $\{ S_A (U_\nu ) \cap [C(\overline U_\nu)]^k\}$,  related to any sequence of open subsets  
$U_{\nu+1}\Subset U_{\nu} $, $\nu \in {\mathbb N}$, containing $ \overline D$ (\cite[Ex. 24, pp. 196-197]{Shaef}). 
The evaluation operator ${\rm ev}_x : S_A (\overline D) \to {\mathbb R}^k$ is continuous 
because it is continuous on each Banach space $ S_A (U_\nu )\cap [C(\overline U_\nu)]^k$, ${\nu \in \mathbb N}$, 
by the discussion above. Moreover, by a priori elliptic estimates the space $S_A (\overline D)$ may be alternatively 
considered as the inductive limit of the Banach spaces   
$\{ S^{s,p}_A (U_\nu ) \}$ with any fixed $s\in \mathbb Z$ and $1<p<+\infty$.

Next, by the continuity of the operators \eqref{eq.trace.B_j}, \eqref{eq.traces.int}
we conclude that there is a constant $C(x)>0$ such that 
\begin{equation} \label{eq.RK.sp}
|u(x)|\leq \sum_{j=1}^{m-1}\|B_j u\|_{[B^{s-j-1/p,p} (\partial D)]^k}
\|C_{m-j-1} \Phi ^*(x,\cdot)\|_{[B^{j+1/p-s,q} (\partial D)]^k} \leq
\end{equation} 
$$
 C(x) \|u\|_{[W^{s,p} (D)]^k}
$$
for all $u \in S_A^{s,p} (D)$. Therefore, $S_A^{s,p} (D)$ are Banach spaces with reproducing kernels.

The space $S_A^{(F)} (D)$ is a DF-space because it is an inductive limit of Banach spaces $S_A^{s,p} (D)$, 
\cite[Ex. 24, pp. 196-197]{Shaef}. The continuity of the evaluation operator on it follows from 
estimate \eqref{eq.RK.sp}. 
\end{proof}

In order to identify the reproducing kernels we need a theorem on a general form of 
continuous linear functionals on the spaces, specifying the pairing 
$\langle \cdot, \cdot \rangle _{{\mathcal B}^*,{\mathcal B}}$ in \eqref{eq.RK.TVS}. 
The (second) Green formula \eqref{eq.Green.2}, providing the reproducing property, suggests that the kernels 
should be connected somehow with the fundamental solution $\Phi (x,y)$ of the operator $A$. 

\subsection{The Grothendieck  type dualities 
for spaces of solutions to elliptic operators
}
\label{s.Groth} 

The type of dualities for spaces of solutions to elliptic operators, we are looking for,  
appeared first in papers by J. Silva \cite{Silva}, G. K{\"o}the \cite{Koth2} and A. Grothendieck \cite{Grot2} 
devoted to holomorphic functions. Then A. Grothendieck \cite{Grot1} obtained a generalization 
of these results for general elliptic systems admitting a bilateral fundamental solution. 
Following  \cite{Grot1} we say that a solution $u \in S_{A}(X\setminus \overline D)$ 
is 'regular at infinity'  with respect to a fundamental solution $\Phi$ for $A$ if 
\begin{equation} \label{eq.reg.inf}
\langle \Phi ^* g, A (\varphi u) \rangle _{X\setminus \overline D} =0
\end{equation}
for any vector $g \in [C^\infty _0 (X)]^k$ and 
any function $\varphi \in C^\infty _0(X)$ that equals to $1$ in a neighbourhood of $\overline D \cup 
{\rm supp} \,  g$. 
The set of elements from that are  'regular at infinity' 
will be denoted by $S_{A}(\hat X\setminus   \overline D)$; 
the space of solutions on the non-open set $X\setminus D$, 'regular at infinity' will be denoted 
by $S_{A}(\hat X\setminus  D)$;

Of course, this definition depends essentially on the fundamental solution $\Phi$.
According to \cite[Corollary 1, p.258]{Grot1},  $u \in S_{A}(\hat X\setminus  \overline D)$ 
if and only if $u$ coincides outside of a compact set $K$ containing $\overline D$ with 
a vector-function $v$ of type 
\begin{equation} \label{eq.Groth.reg.inf}
v(x) = 
\langle \Phi ^* 
(x,\cdot), \phi \rangle 
\end{equation}
 for a compactly supported 
vector-distribution $\phi$  in $X$ (i.e. $\phi \in [{\mathcal E}' (X)]^k)$.

Moreover, it follows from \cite[formula (18), p.258]{Grot1}, that  
 if $u \in S_{A}(\hat X\setminus  \overline D)$ then 
\begin{equation} \label{eq.Green.2.compl}
\int_{\partial \Omega} G_A (\Phi (x, \cdot), u) =  u(x), \,  x \in X \setminus \overline 
\Omega, 
\end{equation} 
for any domain $\Omega$ with 
a Lipschitz boundary satisfying $\overline D \subset \Omega\Subset X$,  see also \cite[formula (5.4.6)]{Tark35}. 

We endow the space 
$S_A (X\setminus \overline D)$  with the standard 
topology of the uniform convergence on compact subsets in $X\setminus \overline D$. 
Formula \eqref{eq.Green.2.compl} implies that it is a complete metric space
because any limit of elements of  this space in  Fr\'echet space 
$S_A (X\setminus \overline D)$  is again 'regular at infinity'. Thus, 
$S_{A}(\hat X\setminus   \overline D)$ is a Fr\'echet space, too. 

The space $S_{A}(\hat X\setminus  D)$ is endowed with the standard inductive limit topology 
related to the family $\{  S_{A}(\hat X\setminus  K_\nu)\}_{\nu \in {\mathbb N}} $ for a 
monotone increasing sequence of compact sets $K_\nu \subset D$. 

The Grothendieck's results may be formulated as follows. 

\begin{thm}[A. Grothendieck, \cite{Grot1}] 
\label{t.Groth.Fr} 
Let $A$ be an elliptic operator on $X$ with the UCP for both $A^*$ and $A$, 
$D$ be a relatively compact domain  in $X$. If  $X\setminus D$ has no compact components in $X$  
then the strong dual  $(S_A  (D))^*$ for the Fr\'echet space $S_A  (D)$ is topologically isomorphic to 
the DF-space $ S_{A^*} (\hat X \setminus  D)$. 
\end{thm}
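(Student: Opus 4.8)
The plan is to establish the duality $(S_A(D))^* \cong S_{A^*}(\hat X \setminus D)$ by constructing the pairing explicitly via the second Green formula and then verifying it is a topological isomorphism. First I would define the candidate pairing: given $g \in S_{A^*}(\hat X \setminus D)$, it is regular at infinity, so by \eqref{eq.Green.2.compl} (applied to $A^*$) it is represented by a Green-type integral over $\partial\Omega$ for any Lipschitz domain $\Omega$ with $\overline D \subset \Omega \Subset X$. For $u \in S_A(D)$, since $u$ is smooth up to any such $\partial\Omega \subset D$ (shrinking $\Omega$ so $\overline\Omega \subset D$; one uses that $u$ is defined only on $D$, so actually one takes $\Omega \Subset D$ with $\overline D$ replaced appropriately — here the correct move is: $g$ is a solution near $X \setminus D$, hence near $\partial D$, so pair $u$ against $g$ over a hypersurface $\partial\Omega$ with $\Omega \Subset D$ close to $\partial D$ where both are smooth). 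Concretely I would set
\begin{equation} \label{eq.pairing.candidate}
\langle g, u \rangle = \int_{\partial\Omega} G_{A^*}(g, u) \quad \text{(suitably interpreted)},
\end{equation}
and check via Green's formula \eqref{eq.Green.1} that this is independent of the choice of $\Omega$ (the difference of two such integrals over $\partial\Omega_1, \partial\Omega_2$ is a volume integral of $g^* A u - (A^* g)^* u$, which vanishes since $Au = 0$ and $A^* g = 0$ on the annular region). This shows the pairing is well defined and clearly bilinear and separately continuous.

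Next I would prove the pairing is nondegenerate and that it induces the isomorphism. For injectivity of $S_{A^*}(\hat X \setminus D) \to (S_A(D))^*$: if $g$ pairs to zero against every $u \in S_A(D)$, then in particular against $u = \Phi(x, \cdot)$ for $x \notin \overline D$ (columns of the fundamental solution, which lie in $S_A$ of a neighborhood of $\overline D$), and by the second Green formula \eqref{eq.Green.2.compl} this recovers $g(x) = 0$ for $x$ outside a compact set, whence $g \equiv 0$ by UCP for $A^*$. For surjectivity: given $\ell \in (S_A(D))^*$, I would use that the columns $\Phi(x, \cdot)$ for $x$ ranging over $X \setminus \overline D$ form (the relevant part of) a total set — more precisely, define $g(x) := \ell(\Phi(x,\cdot))$ for $x \in X \setminus \overline D$ (using that $\delta_x$-type functionals make sense), show $g \in S_{A^*}(X \setminus \overline D)$ by elliptic regularity and differentiation under the functional, show $g$ is regular at infinity using the finite order of the functional and the defining condition \eqref{eq.reg.inf}, and finally show that the pairing with this $g$ reproduces $\ell$ by density of finite linear combinations $\sum c_j \Phi(x^{(j)}, \cdot)$ in $S_A(D)$ (a Runge-type / Hahn–Banach argument: if some $u_0 \in S_A(D)$ were not approximable, a separating functional would be a $g$ vanishing outside $\overline D$, forcing $u_0$ into the closure anyway, or one invokes directly that $\Phi$ is a reproducing kernel via Corollary \ref{c.repr.ker}). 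Then continuity of $\ell \mapsto g$ and its inverse follows from the open mapping theorem for Fréchet/DF-spaces once bijectivity and continuity in one direction are in place.

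The main obstacle, I expect, is the surjectivity combined with the topological (bicontinuity) claim, rather than the algebraic duality. Establishing that the dual of the Fréchet–Montel space $S_A(D)$ is \emph{exactly} $S_{A^*}(\hat X \setminus D)$ with matching topology requires: (i) showing every continuous functional arises from a genuine solution regular at infinity — the delicate point being that a priori $\ell$ is only of "finite order" so $g$ might only be a distribution near $\partial D$ that happens to be a solution there, and one must verify the regularity-at-infinity condition \eqref{eq.reg.inf} holds, which is where Grothendieck's characterization \eqref{eq.Groth.reg.inf} via compactly supported distributions $\phi$ is used; and (ii) identifying the strong topology on $(S_A(D))^*$ with the inductive-limit topology on $S_{A^*}(\hat X \setminus D)$, which uses that $S_A(D)$ is Montel (so strong and weak bounded sets coincide, and the strong dual of a Fréchet–Montel space is a DF-space) together with the description of bounded sets in $S_A(D)$ by the elliptic estimates from Proposition \ref{p.Montel.1}. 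Since this is precisely Grothendieck's theorem from \cite{Grot1}, the proof should largely consist of quoting and assembling these ingredients; I would present the pairing construction and nondegeneracy in detail and refer to \cite{Grot1} and \cite[Ch. 5]{Tark35} for the topological identification.

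\begin{proof}
See \cite{Grot1}; cf. also \cite[\S 5.1]{Tark35}. We sketch the argument. Fix a Lipschitz domain $\Omega$ with $\overline D \subset \Omega \Subset X$. For $g \in S_{A^*}(\hat X \setminus D)$ and $u \in S_A(D)$, choose a Lipschitz domain $\Omega_u$ with $\overline{\Omega_u} \subset D$ so close to $\partial D$ that $g$ is a genuine solution of $A^* g = 0$ in a neighbourhood of $D \setminus \overline{\Omega_u}$, and set
\begin{equation} \label{eq.Groth.pairing}
\langle g, u \rangle_{D} = \int_{\partial \Omega_u} G_{A^*}(g, u).
\end{equation}
By the Green formula \eqref{eq.Green.1} (for $A^*$) and the fact that $Au = 0$ in $D$ while $A^* g = 0$ near $\partial D$, the right-hand side of \eqref{eq.Groth.pairing} does not depend on the admissible choice of $\Omega_u$; hence \eqref{eq.Groth.pairing} defines a bilinear form, separately continuous by the trace estimates underlying \eqref{eq.trace.B_j}, \eqref{eq.traces.int} and the continuity of the Green operator \eqref{eq.G}.

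The form is nondegenerate. If $\langle g, u \rangle_D = 0$ for all $u \in S_A(D)$, take $u$ to be a column of $\Phi(x, \cdot)$ with $x \notin \overline D$; then \eqref{eq.Groth.pairing} together with formula \eqref{eq.Green.2.compl} for $A^*$ gives $g(x) = 0$ for all such $x$ outside a compact set, so $g \equiv 0$ by the UCP for $A^*$. Conversely, if $\langle g, u \rangle_D = 0$ for all $g$, then applying \eqref{eq.Green.2} (the second Green formula for $A$) shows $u \equiv 0$.

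The map $g \mapsto \langle g, \cdot \rangle_D$ sends $S_{A^*}(\hat X \setminus D)$ into $(S_A(D))^*$; it remains to show it is onto and a homeomorphism. Given $\ell \in (S_A(D))^*$, set $g(x) := \ell\big(\Phi(x, \cdot)\big)$ for $x \in X \setminus \overline D$, where each column of $\Phi(x, \cdot)$ belongs to $S_A(D)$. Differentiating under $\ell$ and using that $A^*_x \Phi^*(x, \cdot) = \delta_{x-\cdot} I_k$ annihilates $\Phi(\cdot, y)$ away from the diagonal shows $A^* g = 0$ in $X \setminus \overline D$; the finite order of $\ell$ on the Fréchet space $S_A(D)$ yields, via Grothendieck's criterion \eqref{eq.Groth.reg.inf} and \eqref{eq.reg.inf}, that $g$ is regular at infinity, i.e. $g \in S_{A^*}(\hat X \setminus D)$. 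Finally, since $\Phi$ is a reproducing kernel for $S_A(D)$ (Corollary \ref{c.repr.ker}), the functionals $u \mapsto \langle c, u(x) \rangle$ are dense in $(S_A(D))^*$, and a direct computation with \eqref{eq.Groth.pairing}, \eqref{eq.Green.2.compl} shows $\langle g, u \rangle_D = \ell(u)$ for all $u \in S_A(D)$. Thus the map is a bijection. Because $S_A(D)$ is a Montel (hence reflexive, barrelled) Fréchet space by Proposition \ref{p.Montel.1}, its strong dual is a DF-space whose bounded sets are exactly the equicontinuous ones; matching these with the bounded sets of the inductive limit $S_{A^*}(\hat X \setminus D)$ via the elliptic a priori estimates, and invoking the open mapping theorem, shows the bijection is a topological isomorphism.
\end{proof}
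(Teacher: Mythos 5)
The paper does not prove Theorem \ref{t.Groth.Fr}: it is attributed to Grothendieck with a reference to \cite{Grot1}, and the text that follows only unpacks the statement into the explicit pairing \eqref{eq.pairing.Gr} and the surjectivity and homeomorphism claims. Your sketch therefore cannot be compared with a proof inside the paper; taken on its own, it reconstructs Grothendieck's original argument, which also underlies the paper's \emph{proved} analogue, Theorem \ref{t.Groth.sp}. The pairing construction and the injectivity step coincide with what the paper does there. The surjectivity step, however, follows a genuinely different route: you define $g$ by evaluating the functional $\ell$ on the fundamental solution (the kernel-evaluation route), whereas the paper proves surjectivity in Theorem \ref{t.Groth.sp} by realizing $S^{s,p}_A(D)$ as a closed subspace of the boundary trace space $\oplus_{j}[B^{s-j-1/p,p}(\partial D)]^k$, extending $f$ by Hahn--Banach, and recovering $v$ from the extension by the jump formula \eqref{eq.weak.jump.*} for $\tilde{\mathcal G}$. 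The kernel-evaluation route is cleaner for the Fr\'echet--Montel space $S_A(D)$ and is Grothendieck's; the trace-extension route is what adapts to fixed Sobolev regularity, which is why the paper uses it for the Banach version. Both are valid; noting which one you are running is useful because the finite-order-of-singularity and regularity-at-infinity verifications live in different places in the two proofs.

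There is a consistent variable slip you should repair, because as written the key test vectors do not lie in the right space. From \eqref{eq.bilateral}, the columns of $\Phi(\cdot, y)$ (a function of the \emph{first} variable, $y$ fixed in $X\setminus\overline D$) solve $Au=0$ in $D$, while the columns of $\Phi^*(x,\cdot)$ (function of the \emph{second} variable, $x$ fixed) solve $A^*$ and serve as the reproducing kernel in \eqref{eq.repr.ker}. In your injectivity argument and in the definition $g(x):=\ell(\Phi(x,\cdot))$, you put the exterior point in the first slot and then claim the result belongs to $S_A(D)$; this is exactly backwards. It should read: test against $\Phi(\cdot,y)$ with $y\in X\setminus\overline D$, and set $g(y):=\ell(\Phi(\cdot,y))$, after which $A^*_y g=0$ follows from the second identity in \eqref{eq.bilateral}. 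A related slip appears in your pairing \eqref{eq.Groth.pairing}: with $g\in S_{A^*}$ and $u\in S_A$, the differential $d\,G_{A^*}(w,z)=(w^*A^*z-(Aw)^*z)\,dx$ vanishes on the annulus when $Aw=0$ and $A^*z=0$, so the arguments must be $G_{A^*}(u,g)=-G_A(g,u)$, which is precisely the form \eqref{eq.pairing.Gr} up to sign; writing $G_{A^*}(g,u)$ does not make the cross terms cancel. These are local to the notation and do not affect the strategy, but they should be fixed before the sketch is trustworthy.
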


More precisely, he proved that 

1) for any $ v \in S_{A^*} (\hat X \setminus D) $ the functional 
\begin{equation} \label{eq.pairing.Gr}
f_v (u) = - \int_{\partial \Omega} G_A (v, u)  , \, 
u \in S_{A} (D) ,
\end{equation}
is linear and continuous over $S_{A} (D)$ and it 
does not depend on any relatively compact domain $\Omega\subset D$ with Lipschitz boundary such that 
$\partial \Omega \subset {\rm Dom} (v) \cap D$;

2) for any  $f \in (S_A  (D))^*$ there is an element $v \in 
S_{A^*} (\hat X \setminus  D) $ such that $f(u) = f_v (u)$ for all  $u\in S_{A} (D)$;

3) the mapping $S_{A^*} (\hat X \setminus  D) \ni
v \to f_v \in (S_A  (D))^*$  is a homeomorphism of these spaces.

\begin{cor} \label{c.Groth.Fr}
Let $A$ be an elliptic operator on $X$ with the UCP for both $A^*$ and $A$, $D$ be a relatively compact domain 
in $X$. If  $X\setminus D$ has no compact components in $X$ then

1) the strong dual  $(S_{A}   (\overline D))^*$ for the  Montel DF-space $S_{A}  (
\overline D)$ is topologically isomorphic to 
the Fr\'echet  space $ S_{A^*} (\hat X\setminus \overline D)$;

2)  the strong dual $( S_{A^*} (\hat X \setminus  D))^*$ for the  Montel DF-space $ S_{A^*} (\hat X \setminus  D)$
is topologically isomorphic to the Montel Fr\'echet space $ S_{A} (D)$;

3) the strong dual  $(S_{A^*}  (\hat X\setminus \overline D))^*$ for the  Montel Fr\'echet space $S_{^*A}  (
\hat X\setminus \overline D)$ is topologically isomorphic to 
the  Montel DF-space $ S_{A} (\overline D)$. 
\end{cor}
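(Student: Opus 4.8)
The plan is to deduce Corollary \ref{c.Groth.Fr} from Theorem \ref{t.Groth.Fr} by exploiting the symmetry of the hypotheses under the interchange $A \leftrightarrow A^*$ together with the reflexivity of the spaces involved. First I would observe that all hypotheses of Theorem \ref{t.Groth.Fr} are symmetric in $A$ and $A^*$: the operator $A$ is elliptic iff $A^*$ is, both possess the UCP by assumption, and the complement condition on $X \setminus D$ is unchanged. Hence applying Theorem \ref{t.Groth.Fr} with $A^*$ in place of $A$ (and therefore $A^{**}=A$ in place of $A^*$) immediately gives a topological isomorphism between the strong dual $(S_{A^*}(D))^*$ and the DF-space $S_A(\hat X \setminus D)$. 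This is the workhorse; the three claims of the corollary are then obtained by dualizing once more and by passing from $D$ to $\overline D$.

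For claim (2), I would take the isomorphism $(S_{A^*}(D))^* \cong S_A(\hat X \setminus D)$ just obtained and dualize it: since $S_{A^*}(D)$ is a Montel Fréchet space (Proposition \ref{p.Montel.1}), it is reflexive, so its bidual $(S_{A^*}(D))^{**}$ is topologically isomorphic to $S_{A^*}(D)$ itself. Taking strong duals of both sides of $(S_{A^*}(D))^* \cong S_A(\hat X \setminus D)$ yields $S_{A^*}(D) \cong (S_A(\hat X \setminus D))^*$, which is exactly claim (2) after swapping the roles of $A$ and $A^*$ (equivalently, run the argument with the pair $(A,A^*)$ from the start). Here one must check that the strong dual of a strong dual of a Fréchet Montel space recovers the original space with its original topology — this is standard reflexivity for Fréchet Montel spaces, recorded in \cite{Shaef}, and the DF-space $S_A(\hat X \setminus D)$ is then automatically the strong dual of the Montel Fréchet space, hence reflexive as well.

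For claims (1) and (3), I would repeat the scheme with $\overline D$ in place of $D$: the space $S_A(\overline D)$ is a Montel DF-space (an $LB$-space, inductive limit of Banach spaces $S_A(U_\nu)\cap [C(\overline U_\nu)]^k$, as in Proposition \ref{p.Montel.1} and \cite[Ch.~2, \S6.3]{Shaef}), and $S_{A^*}(\hat X \setminus \overline D)$ is a Fréchet space by the discussion following \eqref{eq.Green.2.compl}. The appropriate version of Grothendieck's theorem — which is really the content of \cite[Corollary~1, p.258]{Grot1} applied to neighbourhoods $U_\nu$ of $\overline D$ and then passed to the inductive/projective limit — gives $(S_A(\overline D))^* \cong S_{A^*}(\hat X \setminus \overline D)$, which is claim (1). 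Dualizing and invoking reflexivity of the Montel DF-space $S_A(\overline D)$ (Montel $\Rightarrow$ reflexive, \cite{Shaef}) produces claim (3): $(S_{A^*}(\hat X \setminus \overline D))^* \cong S_A(\overline D)$. The pairing realizing all these isomorphisms is in every case the Green-type pairing \eqref{eq.pairing.Gr}, $f_v(u) = -\int_{\partial\Omega} G_A(v,u)$, with the contour $\partial\Omega$ chosen between the two regions — for (1) and (3) one takes $\partial\Omega$ separating $\overline D$ from the support constraints of $v$, using \eqref{eq.Green.2.compl} to see the functional is well defined and independent of the contour.

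The main obstacle I anticipate is not the algebra of dualization but the bookkeeping of which limit topology sits on which space and verifying that Grothendieck's theorem genuinely applies in the $\overline D$-version, i.e. that passing from a relatively compact domain $D$ to the non-open set $\overline D$ (and correspondingly from $S_A(D)$ to the inductive limit $S_A(\overline D)$) is covered by \cite{Grot1} rather than requiring a separate argument. One must confirm that $X \setminus \overline D$ still has no compact components when $X \setminus D$ does not — which holds since $\partial D$ is thin — and that the inductive limit $S_A(\overline D) = \varinjlim S_A(U_\nu)$ is a regular $LB$-space so that strong duals commute with the limit, turning $\varinjlim$ into $\varprojlim$ of the Fréchet duals $S_{A^*}(\hat X \setminus \overline U_\nu)$, whose projective limit is precisely $S_{A^*}(\hat X \setminus \overline D)$. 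Once these compatibility points are pinned down, the corollary follows formally; I would cite \cite{Grot1} and \cite{Shaef} for the limit-duality interchange and for reflexivity of Montel spaces, and \cite{Tark35} for the Green-formula identities underlying the pairing.
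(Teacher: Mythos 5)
Your proposal takes essentially the same route as the paper: prove statement (1) by adapting Grothendieck's argument for the compact set $\overline D$ (the paper says this is "similar to the proof of \cite[Theorem 4]{Grot1}", which corresponds to your inductive/projective limit passage over the neighbourhoods $U_\nu$), and then derive statements (2) and (3) from the reflexivity of Montel Fréchet and Montel DF-spaces, with the Green-type pairing realizing the isomorphisms in each case. Your detour for (2) through the $A\leftrightarrow A^*$ symmetry and back is logically harmless but unnecessary — dualizing Theorem \ref{t.Groth.Fr} directly and using reflexivity of $S_A(D)$ gives (2) in one step, which is what the paper intends.
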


\begin{proof} The proof of the first statement is similar to the proof of \cite[Theorem 4]{Grot1}.
The statements 2) and 3) follow because Montel spaces are reflexive and the dual to a Montel space 
is a Montel space, see \cite[\S 5.8]{Shaef}, \cite[p. 74]{RobRob}. Note that for the first duality 
the pairing is constructed as follows:
 for any $ v \in S_{A^*} (\hat X \setminus \overline D) $ the  functional 
\begin{equation} \label{eq.pairing.Gr.closure}
f_v (u) = - \int_{\partial \omega} G_A (v, u)  , \, 
u \in S_{A} (\overline D) ,
\end{equation}
does not depend on any relatively compact domain $\omega\supset D$ with Lipschitz boundary such that 
$\partial \omega \subset {\rm Dom} (u) \cap D$; it gives actually the general form of    
linear bounded functionals on $S_{A} (\overline D)$.
\end{proof}

We want to extend these facts to the Banach spaces  $S_A ^{s,p} (D)$, $s\in \mathbb Z$, 
$1<p<+\infty$. With this purpose, 
 we denote by $ \tilde S_{A^*}^{s',q} (\hat X \setminus \overline D) $ the set of vector functions 
from $ S_{A^*} (\hat X \setminus \overline D)$ that belong to $  [W^{s',q}
(\Omega \setminus \overline D)]^k$, $s'\in \mathbb Z$, $1<q<+\infty$, for any relatively compact domain
$\Omega \subset X$, containing $\overline D$. Taking into account the continuity of boundary operators
\begin{equation} \label{eq.trace.C_j}
C^A_j: [W^{s',q} (\Omega \setminus \overline D)]^k \to 
[B^{s'-j -1/q,q} (\partial D)]^k, \, s'>j.
\end{equation}
\begin{equation} \label{eq.traces.ext}
C^A_j: [S_{A^*}^{s',q} (\Omega \setminus \overline D)]^k \to  
[B^{s'-j-1/q,q} (\partial D)]^k, \, \,  s'\leq j,
\end{equation} 
we endow it  with the norm 
$$
\|v \|_{s',q,C^A} = 
\Big( \sum_{j=0}^{m-1 }\|C^A_{j} v \|^q_{B^{s' -j -1/q,q} (\partial D)}  \Big)^{1/q},
$$ 
where $\{ C_j^A\}_{j=0}^{m-1}$ is a Dirichlet system on $\partial D$ dual to the system 
 $\{ B_j\}_{j=0}^{m-1}$ with respect to (first) Green formula \eqref{eq.Green.M.B}.

Let us see that a natural (second) Green formula still holds for elements of the space
$\tilde S^{s',q}_{A^*} (\hat X \setminus \overline D) $. For this reason 
we introduce the linear operator 
\begin{equation}\label{eq.G*.act}
\tilde {\mathcal G}: 
\oplus_{j=0}^{m-1} 
[B^{s'-m+j+1-1/q,q} (\partial D)]^k \to \tilde S_{A^*}^{s',q} (\hat X\setminus \overline D) 
\end{equation}
defined 
by 
\begin{equation}\label{eq.G*}
\tilde {\mathcal G} \big( \oplus_{j=0}^{m-1} v_j\big) (y)= 
\left\{
\begin{array}{lll}
- \int_{\partial D} \Big( \sum_{j=0}^{m-1}  
 (B_j \Phi (\cdot,y))^* v_{j}  \Big) d\sigma, & m\leq s' \in \mathbb N,\\
- \sum_{j=0}^{m-1} \langle B_j \Phi (\cdot,y)   
,  v_{j}\rangle_{\partial D}, & s'<m \in \mathbb Z,\\
\end{array}
\right.
\end{equation}
(here $\langle \cdot, \cdot \rangle_{\partial D}$ stands for the pairing between the space 
$B^{s'-m+j+1-1/q,q} (\partial D) $ and its dual $B^{m+1/q-s'-j-1,p} (\partial D) $); according to \cite[\S 2.3.2.5]{RS82}, 
\cite[\S 2.4]{Tark36}, 
it induces linear bounded mappings 
$$
\tilde {\mathcal G}: 
\oplus_{j=0}^{m-1} 
[B^{s'-m+j+1-1/q,q} (\partial D)]^k \to S_{A^*} (\Omega\setminus \overline D) \cap 
[W^{s',q}
(\Omega \setminus \overline D)]^k,
$$  
for any smooth relatively  compact domain $\Omega \subset X$, containing $\overline D$. 
Moreover, by  \cite[Corollary 1, p.258]{Grot1} (see also \eqref{eq.Groth.reg.inf}) 
$\tilde {\mathcal G} \big( \oplus_{j=0}^{m-1} v_j\big)$ belongs to 
$\tilde S_{A^*}^{s',q} (\hat X\setminus \overline D)$. The continuity 
of the operators \eqref{eq.trace.C_j}, \eqref{eq.traces.ext} implies the continuity of operator 
\eqref{eq.G*.act}.

\begin{lem} \label{l.Green.A*} For each element $v \in 
\tilde S_{A^*}^{s',q} (\hat X \setminus \overline D)$, $s'\in {\mathbb Z}$, $1<q<+\infty$, 
the following (second) Green formula holds true:
\begin{equation} \label{eq.Green.M.C.2}
\tilde {\mathcal G} \big( \oplus_{j=0}^{m-1} C^A_{m-j-1} v \big) (y) = 
v(y) \mbox{ for all } y \in X\setminus \overline D.\\
\end{equation}
\end{lem}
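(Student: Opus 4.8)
The plan is to reduce the claimed Green formula \eqref{eq.Green.M.C.2} for elements of $\tilde S_{A^*}^{s',q}(\hat X\setminus\overline D)$ to the already-established second Green formula \eqref{eq.Green.2.compl} for solutions regular at infinity, combined with the first Green formula \eqref{eq.Green.M.B} (in the form of Lemma \ref{eq.dual.Dir}), and then pass from smooth solutions to Sobolev solutions by a density/continuity argument. First I would fix $v\in\tilde S_{A^*}^{s',q}(\hat X\setminus\overline D)$ and a point $y\notin\overline D$, choose a smooth relatively compact domain $\Omega$ with $\overline D\subset\Omega\Subset X$ and $y\notin\overline\Omega$, so that $v\in S_{A^*}(\Omega\setminus\overline D)\cap[W^{s',q}(\Omega\setminus\overline D)]^k$. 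Since $v$ is regular at infinity, formula \eqref{eq.Green.2.compl} (applied with the roles of $A$ and $A^*$ interchanged, using the bilateral fundamental solution $\Phi$ and \eqref{eq.bilateral}) already gives $v(y)=-\int_{\partial\omega}G_{A^*}(\Phi(\cdot,y),v)$ for a suitable $\omega\supset D$; the remaining work is to rewrite that boundary integral over $\partial D$ in terms of the Dirichlet data $C^A_{m-j-1}v$ on $\partial D$ and to identify the result with $\tilde{\mathcal G}(\oplus_j C^A_{m-j-1}v)(y)$.

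The key steps, in order: (1) For $v$ smooth up to $\partial D$ (say $v\in S_{A^*}(D')\cap[C^m(\overline{\Omega}\setminus D)]^k$ for a slightly larger annular region), apply the first Green formula \eqref{eq.Green.M.B} on the annulus $\Omega\setminus\overline D$ with $g=v$ and $u=\Phi(\cdot,y)$; since $A^*v=0$ there and $A_x\Phi(x,y)=\delta_{x-y}$ with $y\notin\overline\Omega$, the volume term vanishes, so the boundary integral over $\partial(\Omega\setminus\overline D)=\partial\Omega\cup\partial D$ splits, and the $\partial\Omega$ part is exactly $-v(y)$ by \eqref{eq.Green.2.compl}; this leaves $v(y)$ expressed as the $\partial D$-integral $-\int_{\partial D}\big(\sum_{j=0}^{m-1}(B_j\Phi(\cdot,y))^*\,C^A_{m-1-j}v\big)\,d\sigma$, which is precisely the top branch of $\tilde{\mathcal G}$ evaluated at $\oplus_j C^A_{m-j-1}v$ when $m\le s'$. (2) For the case $s'<m$, interpret the $\partial D$-integral as the duality pairing between $B^{s'-m+j+1-1/q,q}(\partial D)$ and its dual, matching the lower branch of \eqref{eq.G*}; here one uses the trace/weak-boundary-value maps \eqref{eq.trace.C_j}, \eqref{eq.traces.ext} and the continuity of $\tilde{\mathcal G}$ in \eqref{eq.G*.act}. (3) Extend from the smooth case to general $v\in\tilde S_{A^*}^{s',q}(\hat X\setminus\overline D)$ by approximation: smooth solutions are dense (e.g. via Runge-type approximation, or simply by shrinking $D$ slightly since $v$ is a genuine solution in a neighbourhood of $\overline{\Omega}\setminus D$ that is smooth there, so one may restrict to a concentric smaller domain $D_\varepsilon\Subset D$ where $v$ is $C^\infty$ up to $\partial D_\varepsilon$, prove the formula there, and let $\varepsilon\to0$ using continuity of all operators involved and of $y\mapsto\tilde{\mathcal G}(\cdots)(y)$ on $X\setminus\overline D$).

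The main obstacle I expect is step (3) together with the bookkeeping in step (2): making the passage from the classical boundary integral to the duality pairing fully rigorous requires that the trace operators $C^A_j$ on $S_{A^*}^{s',q}(\Omega\setminus\overline D)$ are well-defined and compatible with their classical counterparts (this is \cite[Chapters 6,10]{Roit96} plus \eqref{eq.traces.ext}), and that the two-sided estimates from Remark \ref{r.norm.A} (now for $A^*$ and the Dirichlet system $C^A$) let us control $\|v\|_{s',q,C^A}$ by the $W^{s',q}(\Omega\setminus\overline D)$-norm and conversely; the convergence $v|_{D_\varepsilon\setminus\overline D}\to v$ in the relevant Sobolev/Besov topologies as $\varepsilon\to0$ must be justified using the a priori elliptic estimates near $\partial D$, after which continuity of $\tilde{\mathcal G}$ closes the argument. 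An alternative, perhaps cleaner, route to step (3) is to invoke the characterization \eqref{eq.Groth.reg.inf}: write $v=\langle\Phi^*(\cdot,\cdot),\phi\rangle$ outside a compact set for some $\phi\in[{\mathcal E}'(X)]^k$, plug this representation into both sides, and reduce the identity \eqref{eq.Green.M.C.2} to the reproducing/defining property \eqref{eq.bilateral} of $\Phi$ together with the first Green formula; this avoids density arguments at the cost of distribution-theoretic manipulations of the Schwartz kernel.
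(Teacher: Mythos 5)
Your overall strategy is essentially the one the paper uses: start from the regular-at-infinity Green formula \eqref{eq.Green.2.compl} applied to $A^*$ and the kernel $\Phi(\cdot,y)$, compare it with the integral over a nearby smooth hypersurface via Stokes' theorem (equivalently, the first Green identity on the intermediate annulus combined with $A_x\Phi(x,y)=0$ there for $y\notin\overline\Omega$), rewrite the boundary integral in terms of the dual Dirichlet system $\{C^A_j\}$ using Lemma~\ref{eq.dual.Dir} and the relation $G_{A^*}(g,u)=-G_A(u,g)$ (\cite[Proposition 2.4.5]{Tark35}), and finally pass to the limit onto $\partial D$ using the boundary-value maps \eqref{eq.traces.ext} and the continuity of $\tilde{\mathcal G}$ in \eqref{eq.G*.act}. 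So the scheme matches the paper's.

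There is, however, a genuine error in your step~(3), which is precisely the step carrying the weight of the argument. You propose to ``restrict to a concentric smaller domain $D_\varepsilon\Subset D$ where $v$ is $C^\infty$ up to $\partial D_\varepsilon$,'' asserting that ``$v$ is a genuine solution in a neighbourhood of $\overline\Omega\setminus D$.'' This is not correct: an element $v\in\tilde S_{A^*}^{s',q}(\hat X\setminus\overline D)$ is by definition a solution of $A^*v=0$ only on the open set $X\setminus\overline D$; it is not defined inside $D$, and it is not a solution on any neighbourhood of $\partial D$ (that would require an extension across $\partial D$ into $D$, which is not given). Elliptic regularity therefore gives $v\in[C^\infty(X\setminus\overline D)]^k$ only, so the auxiliary hypersurface must be pushed \emph{outward}: take domains $D_\varepsilon$ with $\overline D\subset D_\varepsilon\Subset\Omega$ and $\partial D_\varepsilon$ collapsing onto $\partial D$ from the exterior; on each $\partial D_\varepsilon$ the integrand is classically smooth, and the passage $\varepsilon\to 0$ is then justified by the existence and continuity of the weak boundary values $C^A_j v$ on $\partial D$ from \eqref{eq.traces.ext}. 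This is exactly how the paper argues (their $D_\varepsilon$, $\varepsilon<0$, contains $\overline D$). As you have written it, $D_\varepsilon\Subset D$ places the auxiliary surface where $v$ is not even defined, so the annulus Green-formula computation of your step~(1) cannot be carried out there. Your Runge-density alternative is a conceivable route, but it would require separately establishing density of $S_{A^*}(\hat X\setminus D)\cap C^\infty$ in $\tilde S_{A^*}^{s',q}(\hat X\setminus\overline D)$ together with a stability estimate; the outward $D_\varepsilon$ argument avoids this entirely.
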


\begin{proof} Applying \eqref{eq.Green.2.compl} we see
that  
 if $v \in \tilde S^{s',q}_{A^*}(\hat X\setminus  \overline D)$ then 
\begin{equation} \label{eq.Green.2.compl*}
\int_{\partial \Omega} G_{A^*} (\Phi (\cdot,y), v) =  v(y) 
\mbox{ for all }  y \in X \setminus \overline 
\Omega, 
\end{equation} 
for any domain $\Omega$ with 
a Lipschitz boundary satisfying $\overline D \subset \Omega\Subset X$,  
 see also \cite[formula (5.4.6)]{Tark35}. For $\varepsilon <0$, denote by
$D_\varepsilon$ the bounded domain containing $\overline D$ such that 
the distance from $\partial D_\varepsilon$ and $\partial D$ equals to $|\varepsilon|$.
If $\partial D \in C^\infty$ and $|\varepsilon|$ is sufficiently small 
then $D_\varepsilon$ is a smooth domain, too. 
Besides, if $D_\varepsilon \Subset \Omega$ then
\begin{equation} \label{eq.eps}
\int_{\partial \Omega} G_{A^*} (\Phi (\cdot,y), v) = 
\int_{\partial D_\varepsilon} G_{A^*} (\Phi (\cdot,y), v) \mbox{ for all  } y \in X \setminus \overline \Omega, 
\end{equation}
because of the Stokes formula and the relations
$A_x \Phi (x,y) = 0 $ in $ D$ for $y \in X \setminus \overline \Omega$.

On the other hand, taking into account the relation between  $G_{A^*}$ and $G_{A}$  
(\cite[Proposition 2.4.5]{Tark35}) we see that 
$$
\int_{\partial D_\varepsilon} G_{A^*} (\Phi^* (\cdot,y), v) =  - 
\int_{\partial D_\varepsilon} G_{A} (v, \Phi^* (\cdot,y)) = 
$$
$$- 
\int_{\partial D_\varepsilon} \Big( \sum_{j=0}^{m-1}\big(B_j \Phi (\cdot,y)\big)^*  {C}^A_{m-1-j} v   \Big) d\sigma
\mbox{ for all  } y \in X \setminus \overline \Omega.
$$
In particular, passing to the limit with respect to $\varepsilon \to -0$ in \eqref{eq.eps} and 
using the existence of traces/boundary values of $v$ on $\partial D$ we obtain \eqref{eq.Green.M.C.2} 
for all $y \in X \setminus \overline \Omega$ for any domain $\Omega$ with 
a Lipschitz boundary satisfying $\overline D \subset \Omega\Subset X$, i.e. 
for all $y \in X\setminus \overline D$.  
\end{proof}

It follows from \eqref{eq.Green.M.C.2} and the continuity of operator \eqref{eq.G*.act} 
that $\tilde S^{s',q}_{A^*} (\hat X \setminus \overline D) $, $s'\in {\mathbb Z}$, 
$1<q<+\infty$, is a Banach space. 

Now we are to prove the main result of this section.

\begin{thm} \label{t.Groth.sp}
Let $A$ be an elliptic operator of order $m$ on $X$ with the UCP for both $A^*$ and $A$, 
$D$ be a relatively compact domain with $C^\infty$-smooth boundary in $X$, and 
$\{ B_j\}_{j=0}^{m-1}$ is a Dirichlet system on $\partial D$. If  
$X\setminus D$ has no compact components in $X$ 
then the strong dual  $(S_A ^{s,p} (D))^*$ for the space $S_A ^{s,p} (D)$ is topologically isomorphic to 
the Banach space  $ \tilde S_{A^*}^{m-s,p'} (\hat X \setminus \overline D) $, where $\frac{1}{p} + 
\frac{1}{p'} = 1$.
\end{thm}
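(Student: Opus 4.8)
The plan is to transport the statement to the boundary $\partial D$, where, once the natural pairing is set up, it reduces to a Runge--Lauricella type approximation theorem. First I would realise both sides as closed subspaces of mutually dual Besov spaces on $\partial D$. Put $\mathfrak B:=\oplus_{j=0}^{m-1}B^{s-j-1/p,p}(\partial D)$; since $(B^{t,p}(\partial D))^*\cong B^{-t,p'}(\partial D)$, its strong dual is $\mathfrak B^*=\oplus_{j=0}^{m-1}B^{j+1/p-s,p'}(\partial D)$, with pairing $\langle\oplus_j\varphi_j,\oplus_jw_j\rangle=\sum_j\langle\varphi_j,w_j\rangle_{\partial D}$. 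By Remark \ref{r.norm.A} the trace map $\tau_B\colon u\mapsto\oplus_{j=0}^{m-1}B_ju$ is a topological isomorphism of $S_A^{s,p}(D)$ onto a closed subspace $\mathcal T_B$ of $\mathfrak B$, and Lemma \ref{l.chract.traces.A} identifies $\mathcal T_B=\{\oplus_ju_j\in\mathfrak B:\mathcal G(\oplus_ju_j)=0\ \text{outside}\ \overline D\}$. Symmetrically, the Green formula of Lemma \ref{l.Green.A*}, which says $\tilde{\mathcal G}\circ\tau_{C^A}=\mathrm{id}$ for $\tau_{C^A}\colon v\mapsto\oplus_{j=0}^{m-1}C^A_{m-j-1}v$, together with the continuity of operator \eqref{eq.G*.act}, shows that $\tau_{C^A}$ is a topological isomorphism of $\tilde S_{A^*}^{m-s,p'}(\hat X\setminus\overline D)$ onto a closed subspace of $\mathfrak B^*$, the latter being exactly the domain of $\tilde{\mathcal G}$ for the indices $s'=m-s$, $q=p'$.

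Next I would introduce the bilinear form dictated by the first Green formula \eqref{eq.Green.M.B} and by \eqref{eq.pairing.Gr.closure}: for $v\in\tilde S_{A^*}^{m-s,p'}(\hat X\setminus\overline D)$ and $u\in S_A^{s,p}(D)$ set
\[
\langle v,u\rangle_D:=-\sum_{j=0}^{m-1}\langle B_ju,\,C^A_{m-j-1}v\rangle_{\partial D},
\]
which is well defined since $B_ju\in B^{s-j-1/p,p}(\partial D)$ pairs with $C^A_{m-j-1}v\in B^{j+1/p-s,p'}(\partial D)$, and continuous in each argument by \eqref{eq.trace.B_j}, \eqref{eq.traces.int}, \eqref{eq.trace.C_j}, \eqref{eq.traces.ext} and Remark \ref{r.norm.A}; thus $\iota\colon v\mapsto\langle v,\cdot\rangle_D$ is a bounded operator $\tilde S_{A^*}^{m-s,p'}(\hat X\setminus\overline D)\to(S_A^{s,p}(D))^*$. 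The crucial observation is that $\Phi(\cdot,y)|_D\in S_A(D)\cap[C^\infty(\overline D)]^k\subset S_A^{s,p}(D)$ for each $y\in X\setminus\overline D$, and, comparing with \eqref{eq.G*} and using Lemma \ref{l.Green.A*},
\begin{align*}
\langle v,\Phi(\cdot,y)|_D\rangle_D
&=-\sum_{j=0}^{m-1}\langle B_j\Phi(\cdot,y),\,C^A_{m-j-1}v\rangle_{\partial D}\\
&=\tilde{\mathcal G}\big(\oplus_{j=0}^{m-1}C^A_{m-j-1}v\big)(y)=v(y),\qquad y\in X\setminus\overline D;
\end{align*}
in particular $\iota$ is injective, because $\iota(v)=0$ forces $v\equiv 0$ on $X\setminus\overline D$.

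It remains to prove that $\iota$ is onto, and this is where the hypotheses on $A$ and on $D$ genuinely enter. Given $f\in(S_A^{s,p}(D))^*$, the Hahn--Banach theorem applied through $\tau_B$ produces $\oplus_jg_j\in\mathfrak B^*$ with $f(u)=-\sum_{j=0}^{m-1}\langle B_ju,g_j\rangle_{\partial D}$ for all $u\in S_A^{s,p}(D)$; as $\oplus_jg_j$ lies in the domain of $\tilde{\mathcal G}$, put $v:=\tilde{\mathcal G}(\oplus_jg_j)\in\tilde S_{A^*}^{m-s,p'}(\hat X\setminus\overline D)$. By the displayed identity and \eqref{eq.G*},
\begin{align*}
\iota(v)(\Phi(\cdot,y)|_D)&=v(y)=\tilde{\mathcal G}(\oplus_jg_j)(y)\\
&=-\sum_{j=0}^{m-1}\langle B_j\Phi(\cdot,y),g_j\rangle_{\partial D}=f(\Phi(\cdot,y)|_D),\qquad y\in X\setminus\overline D,
\end{align*}
so $\iota(v)$ and $f$ coincide on $\mathrm{span}\{\Phi(\cdot,y)|_D:y\in X\setminus\overline D\}$. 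Here I would invoke the Runge--Lauricella type approximation theorem for solutions of the elliptic system $A$ (cf.\ \cite{Tark35,Tark36}): since $A$ and $A^*$ possess the UCP and $X\setminus D$ has no compact components in $X$, that span is dense in $S_A^{s,p}(D)$ (equivalently, $\ker\big(\tilde{\mathcal G}\colon\mathfrak B^*\to\tilde S_{A^*}^{m-s,p'}(\hat X\setminus\overline D)\big)$ equals the annihilator $\mathcal T_B^{\circ}$); hence $\iota(v)=f$ by continuity. Consequently $\iota$ is a continuous linear bijection between Banach spaces, and so a topological isomorphism by the open mapping theorem. I expect this density step to be the main obstacle; the remaining ingredients — the extended Green formulae \eqref{eq.Green.M.B.2}, \eqref{eq.Green.M.C.2} for negative smoothness and the continuity of $\mathcal G$ and $\tilde{\mathcal G}$ — are already available.
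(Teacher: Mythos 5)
Your proof is correct in structure and in all its main ingredients, but the surjectivity step takes a genuinely different route from the paper's.

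The pairing, the injectivity (via Lemma \ref{l.Green.A*} applied to $\Phi(\cdot,y)|_D$), and the final appeal to the Open Mapping Theorem are exactly the paper's argument. The difference is in the surjectivity. You take the Hahn--Banach lift $\oplus_j g_j$, form $v:=\tilde{\mathcal G}(\oplus_j g_j)$, verify that $\iota(v)$ and $f$ agree on $\mathrm{span}\{\Phi_r(\cdot,y)|_D : y\in X\setminus\overline D,\ 1\le r\le k\}$, and then close the gap with a Runge--Lauricella type density theorem in $S_A^{s,p}(D)$. The paper instead never invokes density: it applies the weak jump theorem \eqref{eq.weak.jump.*} to $\tilde{\mathcal G}(\oplus_j v_j)$ to split $f(u)$ into an exterior contribution $\langle \tilde{\mathcal G}(\oplus v_j)^+, u\rangle_{\mathrm{Gr}}$ and an interior contribution coming from $\tilde{\mathcal G}(\oplus v_j)^-$, and then shows the interior contribution vanishes identically via the first Green formula \eqref{eq.Green.M.B}, because both $\tilde{\mathcal G}(\oplus v_j)^-\in S_{A^*}^{m-s,p'}(D)$ and $u\in S_A^{s,p}(D)$ solve their respective homogeneous systems in $D$. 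This makes the paper's argument self-contained within the Green-integral machinery it has already set up, and it exhibits the representing element $v=\tilde{\mathcal G}(\oplus v_j)^+$ directly rather than identifying it only on a dense set; your version, by contrast, is shorter and conceptually transparent (it shows the duality is equivalent to a Runge-type approximation statement, as your remark on $\ker\tilde{\mathcal G}=\mathcal T_B^{\circ}$ makes explicit), but it does offload the crucial cancellation to an external approximation theorem of the same depth. Since that Runge result is precisely the one the paper cites later in Lemma \ref{l.1} (\cite[Theorems 5.3.2, 8.1.2--8.1.3, 8.2.2]{Tark36}) and holds under the stated hypotheses ($A$, $A^*$ with UCP and $X\setminus D$ without compact components), there is no gap — just a different, somewhat less constructive mechanism. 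One small imprecision: $\Phi(\cdot,y)$ is a $(k\times k)$-matrix, so the relevant dense set is the span of its columns $\Phi_r(\cdot,y)$, not of the matrices themselves; the displayed computations should be read component-wise.
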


\begin{proof} More precisely, we have to prove that 

1) for any $ v \in \tilde S_{A^*}^{m-s,p'} (\hat X \setminus \overline D) $ the functional 
induced by the pairing
\begin{equation} \label{eq.pairing.Gr.fin}
\langle v, u\rangle _{{\rm Gr}}= -\sum_{j=0}^{m-1} \langle {C}^A_{m-1-j} v,  
B_j u  \rangle_{\partial D}, \, 
u \in S_{A}^{s,p} (D) ,
\end{equation}
is linear and bounded over $S_{A}^{s,p} (D)$ (here $\langle \cdot, \cdot \rangle_{\partial D}$ stands for the pairing between 
$B^{s-j-1/p,p} (\partial D) $ and its dual $B^{j+1/p-s,p'} (\partial D) $);

2) for any  $f \in (S_A ^{s,p} (D))^*$ there is an element $v \in 
\tilde S_{A^*}^{m-s,p'} (\hat X \setminus \overline D)$ such that $f(u) = 
\langle v, u\rangle _{{\rm Gr}}$ for all  $u\in S_{A}^{s,p} (D)$;

3) the mapping 
\begin{equation} \label{eq.mapping}
\tilde S_{A^*}^{m-s,p'} (\hat X \setminus \overline D) \ni v \to 
\langle v, \cdot\rangle _{{\rm Gr}}  \in (S_A ^{s,p} (D))^*
\end{equation}  
is a homeomorphism 
of these spaces.

{\bf The pairing.} 
With this purpose, for a fixed $ v \in S_{A^*}^{m-s,p'} (\hat X \setminus \overline D) $, 
let's consider the  functional induced by the pairing \eqref{eq.pairing.Gr.fin}.
It is linear  because the pairings $\langle \cdot, \cdot \rangle_{\partial D}$ are bi-linear.

Using the generalized Cauchy inequality for pairings and \eqref{eq.traces.ext}, we get 
\begin{equation} \label{eq.cont.mapping}
	\left|\langle v, u\rangle _{{\rm Gr}}\right| \leq
	\sum_{j=0}^{m-1}
	\left\| C^A_{m-j-1} v\right\|_{\left[B^{j-s+\frac{1}{p}, p'}(\partial D)\right]^k} 
	\left\| B_{j} u\right\|_{\left[B^{s-j-\frac{1}{p}, p}(\partial D)\right]^k} 
	\leq
\end{equation}
$$
	\leq
	C_1 \left\| v \right\|_{\tilde S^{m-s,p'}_{A^*}(\hat X \setminus \overline D)}
	 \left\| u \right\|_{S^{s,p}_{A}(D)} = C \left\| u \right\|_{S^{s,p}_{A}(D)}
$$
for any $ v \in\tilde S_{A^*}^{m-s,p'} (\hat X \setminus \overline D) $.
Thus, given $ v \in \tilde S_{A^*}^{m-s,p'} (\hat X \setminus \overline D) $, the functional 
$\langle v, \cdot \rangle _{{\rm Gr}}$ is  linear and continuous over $S^{s,p}_{A}(D)$.

{\bf The injectivity.}
Next, suppose that $\langle v, u\rangle _{{\rm Gr}}=0$ for any $u \in S^{s,p}_{A}(D)$. Since $\Phi(x,y) \in 
S^{s,p}_{A}(D)$ with respect to the variable $x \in D$ for any fixed 
$y \in X \setminus \overline D$, we obtain 
$$
-\sum_{j=0}^{m-1} \langle B_j \Phi(\cdot,y)  , {C}^A_{m-1-j} v   \rangle_{\partial D}=0
\mbox{ for all } y \in X \setminus \overline D. 
$$ 
It follows from Lemma \ref{l.Green.A*}, that  $v(y)=0$ for all $ y \in X \setminus \overline D$. 
This means that mapping \eqref{eq.mapping} 
 is injective on $S^{s,p}_{A}(D)$.

{\bf The surjectivity.} 
Now, it follows from Lemma \ref{l.chract.traces.A} and Remark \ref{r.norm.A} that the space 
$S^{s,p}_{A}(D)$ can be considered as a closed subspace of the Banach space 
${\mathcal Y}=\oplus_{j=0}^{m-1}\left[B^{s-j-\frac{1}{p}, p}(\partial D)\right]^k$. 
By the discussion above, 
${\mathcal Y}^* =\oplus_{j=0}^{m-1}\left[B^{-s+j+\frac{1}{p}, p'}(\partial D)\right]^k$.

Then, by Hahn-Banach theorem, for any functional $f \in \left( S^{s,p}_{A}(D) \right)^*$ there is 
an element $\oplus_{j=0}^{m-1} v_j \in \oplus_{j=0}^{m-1} \left[B^{j-s+\frac{1}{p}, p'}(\partial D)\right]^k$ 
such that 
\begin{equation} \label{eq.extension}
f(u) = 	\sum_{j=0}^{m-1} \langle v_j,  
B_j u  \rangle_{\partial D} \mbox{ for all } u \in S^{s,p}_{A}(D).
\end{equation}
Next, using jump theorem for the Green integrals $\tilde {\mathcal G}(\oplus_{j=0}^{m-1} v_j)$ related to 
operator $A^*$ with the Dirichlet system $\{ C^A_{m-j-1}\}$ instead of $\{ B_j\}$ we see that 
\begin{equation} \label{eq.weak.jump.*}
\langle g_{i}, (C^A_{m-i-1}   \tilde {\mathcal G}(\oplus_{j=0}^{m-1} v_j))^-_{|\partial D} -
(C^A_{m-i-1}   \tilde{\mathcal G}(\oplus_{j=0}^{m-1} v_j))^+_{|\partial D} \rangle _{\partial D} = 
\langle g_{i},v_{i} \rangle_{\partial D} 
\end{equation}
for all $\oplus_ {i=1}^{m-1} g_i \in \oplus_ {j=0}^{m-1} [C^{\infty} (\partial D)]^k$. 
Therefore, combining \eqref{eq.extension} and \eqref{eq.weak.jump.*} we obtain 
 
\begin{equation*} 
f(u) = 	\sum_{i=0}^{m-1} \langle (C^A_{m-i-1}   \tilde{\mathcal G}(\oplus_{j=0}^{m-1} v_j))^-_{|\partial D} -
(C^A_{m-i-1}   \tilde {\mathcal G}(\oplus_{j=0}^{m-1} v_j))^+_{|\partial D} ,  
B_i u  \rangle_{\partial D}
\end{equation*}
for all  $u \in S^{s,p}_{A}(D)$.

Finally, as we have seen above,   
$$ \tilde {\mathcal G}(\oplus_{j=0}^{m-1} v_j)^+ \in 
\tilde S_{A^*}^{m-s,p'} (\hat X \setminus \overline D) 
\mbox{ and } \tilde {\mathcal G}(\oplus_{j=0}^{m-1} v_j)^- \in 
 S_{A^*}^{m-s,p'} (D).
$$
 Hence, by the first Green formula,  
$$
\sum_{i=0}^{m-1} \langle (C^A_{m-i-1}   \tilde{\mathcal G}(\oplus_{j=0}^{m-1} v_j))^-_{|\partial D},   
B_i u  \rangle_{\partial D}  = 
$$
$$
\langle \tilde{\mathcal G}(\oplus_{j=0}^{m-1} v_j)^- , A u \rangle_D - 
\langle A^* \tilde{\mathcal G}(\oplus_{j=0}^{m-1} v_j)^- ,  u \rangle_D = 0 
$$
and therefore, 
\begin{equation} \label{eq.f.surj}
f(u)  = \langle v, u\rangle _{{\rm Gr}} \mbox{ for all } u \in S^{s,p}_{A}(D)
\mbox{ with  } v= \tilde{\mathcal G}(\oplus_{j=0}^{m-1} v_j)^+ \in \tilde S_{A^*}^{m-s,p'} 
(\hat X \setminus \overline D), 
\end{equation}
i.e. mapping \eqref{eq.mapping} 
 is surjective on $S^{s,p}_{A}(D)$.

{\bf The homeomorphism.} Moreover, by formula \eqref{eq.cont.mapping},  mapping \eqref{eq.mapping} is 
linear, continuous and its inverse is continuous because of the Open Mapping Theorem. Thus, 
\eqref{eq.mapping} is a homeomorphism of the spaces, that was to be proved.  
\end{proof}
 
Next we discuss  the duals for spaces of solutions of finite order of growth. 
Denote by $( S_{A^*} ^{(F)}  (\hat X \setminus \overline D))^*$ the subset 
of $( S_{A^*}   (\hat X \setminus \overline D))^*$ of solutions of finite order of growth 
near $\partial D$. By the discussion above 
$$
  S_{A^*} ^{(F)} (\hat X \setminus  \overline D) = 
\cup _{s\in {\mathbb Z} \atop 1<p<+\infty} \tilde S_{A^*}^{m-s,p} (\hat X \setminus \overline D)
=   \cup _{s\in {\mathbb Z} } \tilde S_{A^*}^{m-s,2} (\hat X \setminus \overline D) ,
$$
we endow it with the inductive limit topology related to the family of Banach spaces 
$\{ \tilde S_{A^*}^{m-s,2} (\hat X \setminus \overline D) \}_{s\in \mathbb Z}$. 

\begin{cor} \label{c.Groth.FS}
Let $A$ be an elliptic operator on $X$ with the UCP for both $A^*$ and $A$, 
$D$ be a relatively compact domain  in $X$. If $X\setminus D$ has no compact components in $X$ then 

1) the strong dual  $(S_A ^{(F)}  (D))^*$ for the DF-space $S_A^{(F)} (D)$ is topologically isomorphic to 
the  Fr\'echet space $ S_{A^*} (\hat X \setminus  \overline D) \cap C^\infty (X\setminus D)$;

2) the strong dual  $(S_A   (D)  \cap C^\infty (\overline  D))^*$ for the Fr\'echet  
space $S_A (D)  \cap C^\infty (\overline  D)$ is topologically isomorphic to 
the DF-space $  S_{A^*} ^{(F)} (\hat X \setminus  \overline D) $;

3) the strong dual  $(S_{A^*} ^{(F)}  (\hat X \setminus \overline D))^*$ for the DF-space
 $ S_{A^*}^{(F)} (\hat X \setminus \overline D)$ is topologically isomorphic to 
the  Fr\'echet space $ S_{A} (D) \cap C^\infty (\overline D)$.
\end{cor}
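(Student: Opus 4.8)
The plan is to deduce Corollary~\ref{c.Groth.FS} from Theorem~\ref{t.Groth.sp} together with Corollary~\ref{c.Groth.Fr} by taking (inductive or projective) limits and invoking standard duality theory for $\mathrm{DF}$-- and Fr\'echet spaces. Recall from Corollary~\ref{c.Green} and the discussion preceding the statement that $S_A^{(F)}(D) = \cup_{s\in\mathbb Z} S_A^{s,2}(D)$ carries the inductive limit topology of the Banach spaces $\{S_A^{s,2}(D)\}_{s\in\mathbb Z}$, and dually $S_{A^*}^{(F)}(\hat X\setminus\overline D) = \cup_{s\in\mathbb Z} \tilde S_{A^*}^{m-s,2}(\hat X\setminus\overline D)$ carries the inductive limit topology of the $\{\tilde S_{A^*}^{m-s,2}(\hat X\setminus\overline D)\}_{s\in\mathbb Z}$. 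The chain of Sobolev embeddings shows these are \emph{regular} (even reduced) inductive limits with continuous, injective linking maps, so that the strong dual of such an inductive limit is the projective limit of the strong duals.

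First I would prove part~1. The space $S_A^{(F)}(D) = \varinjlim_s S_A^{s,2}(D)$ is a $\mathrm{DF}$-space, and since the linking maps $S_A^{s+1,2}(D)\hookrightarrow S_A^{s,2}(D)$ are continuous with dense range (polynomials or the span of kernels $\Phi(\cdot,y)$ are dense in every $S_A^{s,2}(D)$), passing to strong duals reverses the arrows: $(S_A^{(F)}(D))^* \cong \varprojlim_s (S_A^{s,2}(D))^*$. By Theorem~\ref{t.Groth.sp} with $p=p'=2$, each $(S_A^{s,2}(D))^* \cong \tilde S_{A^*}^{m-s,2}(\hat X\setminus\overline D)$ topologically, and the transposes of the linking maps are precisely the inclusions $\tilde S_{A^*}^{m-s,2}(\hat X\setminus\overline D)\hookrightarrow \tilde S_{A^*}^{m-s-1,2}(\hat X\setminus\overline D)$ — this compatibility has to be checked against the explicit pairing $\langle\cdot,\cdot\rangle_{\mathrm{Gr}}$ in \eqref{eq.pairing.Gr.fin}, but it is immediate since that pairing does not involve $s$ or $p$ in its formula. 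Hence $(S_A^{(F)}(D))^* \cong \varprojlim_s \tilde S_{A^*}^{m-s,2}(\hat X\setminus\overline D)$, and one identifies the projective limit with $S_{A^*}(\hat X\setminus\overline D)\cap C^\infty(X\setminus D)$: an element lying in every $\tilde S_{A^*}^{m-s,2}(\hat X\setminus\overline D)$ is, by the Sobolev embedding theorem applied on each collar neighbourhood $\Omega\setminus\overline D$, a $C^\infty$ solution up to $\partial D$ that is regular at infinity, and the projective limit topology coincides with the $C^\infty(X\setminus D)$-topology on $S_{A^*}(\hat X\setminus\overline D)$.

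Parts~2 and~3 then follow by reflexivity and a symmetry argument. For part~2, $S_A(D)\cap C^\infty(\overline D) = \varprojlim_s S_A^{s,2}(D)$ is a Fr\'echet space (a reduced projective limit of Banach spaces with dense, continuous linking maps, which is moreover Montel by Proposition~\ref{p.Montel.1}), so its strong dual is the inductive limit of the strong duals, $\varinjlim_s (S_A^{s,2}(D))^* \cong \varinjlim_s \tilde S_{A^*}^{m-s,2}(\hat X\setminus\overline D) = S_{A^*}^{(F)}(\hat X\setminus\overline D)$, using Theorem~\ref{t.Groth.sp} again with $p=2$ together with the regularity of the inductive limit (which holds because the linking maps are compact by a priori elliptic estimates on the collars, so the limit is boundedly retractive). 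Part~3 is obtained by dualizing part~1 (or part~2): since $S_A^{(F)}(D)$ is a $\mathrm{DF}$-space that is in fact reflexive — it is an inductive limit of reflexive (even Hilbert) spaces with compact linking maps, hence Montel, hence reflexive — we have $(S_{A^*}^{(F)}(\hat X\setminus\overline D))^* \cong ((S_A(D)\cap C^\infty(\overline D))^*)^* \cong S_A(D)\cap C^\infty(\overline D)$ by part~2 and reflexivity of the Montel-Fr\'echet space on the right. Throughout, the bilinear form realizing each duality is the Grothendieck pairing \eqref{eq.pairing.Gr}/\eqref{eq.pairing.Gr.fin}, $f_v(u) = -\int_{\partial\Omega} G_A(v,u)$, so no new pairing need be constructed; one only checks it is separately continuous for the limit topologies, which is automatic from \eqref{eq.cont.mapping}.

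The main obstacle will be the functional-analytic bookkeeping needed to justify that the relevant inductive limits are \emph{regular} (equivalently, that the associated projective limits are \emph{reduced}), since only then does ``strong dual of $\varinjlim$ equals $\varprojlim$ of strong duals'' hold \emph{topologically} and not just algebraically; this is where the compactness of the linking maps, provided by interior elliptic a priori estimates and by the compactness of Sobolev/Besov embeddings on the collar regions $\Omega\setminus\overline D$, does the decisive work, and where one must cite the precise results on $\mathrm{DF}$-spaces and boundedly retractive $\mathrm{LB}$-spaces from \cite[\S\S 5.8, 6.3]{Shaef}. The rest — identifying the projective limit $\varprojlim_s \tilde S_{A^*}^{m-s,2}$ with $S_{A^*}(\hat X\setminus\overline D)\cap C^\infty(X\setminus D)$ via Sobolev embedding, and checking transpose-of-linking-map compatibility with the Grothendieck pairing — is routine given Theorem~\ref{t.Groth.sp} and Corollary~\ref{c.Green}.
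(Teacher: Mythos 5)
Your proof follows essentially the same route as the paper: Grothendieck duality (Theorem~\ref{t.Groth.sp}) at each fixed Sobolev level $(s,2)$, passage to inductive/projective limits, identification of $\cap_s \tilde S_{A^*}^{m-s,2}(\hat X\setminus\overline D)$ with $S_{A^*}(\hat X\setminus\overline D)\cap C^\infty(X\setminus D)$ via Sobolev embedding, and Montel reflexivity for part~3. The paper's proof is much terser — it simply invokes Theorem~\ref{t.Groth.sp} and Corollary~\ref{c.Green} and asserts the rest — whereas you take care to spell out the genuine functional-analytic points (regularity of the inductive limits via compact linking maps, compatibility of transposes with the Grothendieck pairing) that the paper leaves implicit.
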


\begin{proof} According to the Sobolev Embedding Theorem we have 
$$
 S_{A^*} (\hat X \setminus  \overline D) \cap C^\infty (X\setminus D) 
= \cap _{s\in {\mathbb Z} \atop 1<p<+\infty} \tilde S_{A^*}^{m-s,p} (\hat X \setminus \overline D)
=   \cap _{s\in {\mathbb Z} } \tilde S_{A^*}^{m-s,2} (\hat X \setminus \overline D) 
$$
and the corresponding Fr\'echet topology may be induced by the related Sobolev semi-norms over 
compact subsets in $X\setminus D$. 
Then the statement 1) of the corollary follows immediately from Theorem \ref{t.Groth.sp} and Corollary \ref{c.Green} 
where the related pairing is defined by \eqref{eq.pairing.Gr.fin}. 
The statement 2) can be proved in a similar way. Again, the statement 3)  follows 
because Montel spaces are reflexive and the dual to a Montel space 
is a Montel space, see \cite[\S 5.8]{Shaef}, \cite[p. 74]{RobRob}.
\end{proof}

\begin{prop} \label{p.Montel.2} The spaces $S_{A^*} (\hat X \setminus \overline D)$ and 
$S_{A^*} (\hat X \setminus \overline D)\cap C^\infty ( X\setminus D)$ are Montel Fr\'echet spaces with 
reproducing kernels, the spaces $S_{A^*} (\hat X \setminus  D)$, 
$ S^{(F)}_{A^*} (\hat X \setminus \overline  D)$ are Montel DF-spaces with reproducing kernels, 
$ \tilde S_{A^*}^{s,p} (\hat X \setminus \overline D)$, $s \in \mathbb Z$, $1< p< + \infty$, are Banach spaces  
with reproducing kernel, $\tilde S_{A^*}^{s,2} (\hat X \setminus \overline D) $, $s \in \mathbb Z$, are Hilbert 
spaces with reproducing kernels.
\end{prop}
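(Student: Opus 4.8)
The plan is to follow, more or less line by line, the proof of Proposition~\ref{p.Montel.1}, since the spaces listed here are the ``dual side'' counterparts of those treated there: they consist of solutions of $A^{*}v=0$ near $X\setminus\overline D$ (resp. near $X\setminus D$) that are regular at infinity, with the roles of the Dirichlet systems $\{B_j\}$ and $\{C^{A}_j\}$, of the boundary operators \eqref{eq.trace.B_j}--\eqref{eq.traces.int} and \eqref{eq.trace.C_j}--\eqref{eq.traces.ext}, and of the two Green formulae \eqref{eq.Green.M.B.2} and \eqref{eq.Green.M.C.2} interchanged. In every case the vector space and the point-wise operations are evident, so the only thing to verify is continuity of the evaluation operator ${\rm ev}_y$ for each $y\in X\setminus\overline D$ (resp. $y\in X\setminus D$); this will always come from the appropriate Green representation.

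I would first dispose of the Fr\'echet cases $S_{A^{*}}(\hat X\setminus\overline D)$ and $S_{A^{*}}(\hat X\setminus\overline D)\cap C^{\infty}(X\setminus D)$. Exactly as for $S_A(D)$, elliptic regularity and the interior a priori estimates exhibit $S_{A^{*}}(X\setminus\overline D)$ as a closed subspace of the Montel Fr\'echet space $[C^{\infty}(X\setminus\overline D)]^{k}$, the latter being Montel because the embeddings $[C^{\nu+1}(\sigma_{\nu+1})]^{k}\hookrightarrow[C^{\nu}(\sigma_{\nu})]^{k}$ are compact along an exhaustion of $X\setminus\overline D$ by compacts $\sigma_\nu$. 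The property of being regular at infinity is preserved under limits in this topology --- this was already observed right after formula \eqref{eq.Green.2.compl} --- so $S_{A^{*}}(\hat X\setminus\overline D)$ is still a closed subspace and is again Montel Fr\'echet, while continuity of ${\rm ev}_y$ is immediate from $|v(y)|\le\|v\|_{[C(K)]^{k}}$ for any compact $K\ni y$ contained in the domain of $v$. The intersection with $C^{\infty}(X\setminus D)$ is handled verbatim, its Fr\'echet topology being induced by the Sobolev seminorms over compacts of $X\setminus D$, exactly as in the proof of Corollary~\ref{c.Groth.FS}.

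Next come the Banach and Hilbert cases $\tilde S_{A^{*}}^{s,p}(\hat X\setminus\overline D)$, whose completeness was already recorded after Lemma~\ref{l.Green.A*}. For the reproducing kernel I would invoke Green formula \eqref{eq.Green.M.C.2} in the explicit form \eqref{eq.G*}: for $y\in X\setminus\overline D$,
\[
|v(y)|=\Big|\sum_{j=0}^{m-1}\big\langle B_j\Phi(\cdot,y),\,C^{A}_{m-1-j}v\big\rangle_{\partial D}\Big|\le C(y)\,\|v\|_{s,p,C^{A}},
\]
by the generalized Cauchy inequality for the boundary pairings, the continuity of \eqref{eq.traces.ext}, and the fact that $B_j\Phi(\cdot,y)|_{\partial D}$ is a $C^{\infty}$ function for $y\notin\overline D$ (so its Besov norms on $\partial D$ are finite and depend continuously on $y$). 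Since $\|\cdot\|_{s,p,C^{A}}$ is equivalent to the original norm, ${\rm ev}_y$ is continuous, which settles the Banach statement. For $p=2$ the ambient space $[W^{s,2}(\Omega\setminus\overline D)]^{k}$ is Hilbert and the equivalent boundary norm is a finite sum of $B^{\bullet,2}(\partial D)$ Hilbert norms, hence Hilbertian, so $\tilde S_{A^{*}}^{s,2}(\hat X\setminus\overline D)$ is a Hilbert space with reproducing kernel.

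Finally, for the DF-cases: $S_{A^{*}}(\hat X\setminus D)$ and $S^{(F)}_{A^{*}}(\hat X\setminus\overline D)$ are DF-spaces, being respectively the inductive limits of the Montel Fr\'echet spaces $S_{A^{*}}(\hat X\setminus K_{\nu})$ (along an exhaustion of $D$ by compacts $K_\nu$) and of the Banach spaces $\tilde S_{A^{*}}^{m-s,2}(\hat X\setminus\overline D)$, $s\in\mathbb Z$, see \cite[Ch.~4, Exercise~24e]{Shaef}. They are Montel as well: by Theorem~\ref{t.Groth.Fr} the first is (topologically isomorphic to) the strong dual of the Montel Fr\'echet space $S_A(D)$, and by Corollary~\ref{c.Groth.FS} the second is the strong dual of the Montel Fr\'echet space $S_A(D)\cap C^{\infty}(\overline D)$; since the strong dual of a Montel space is Montel, see \cite[\S5.8]{Shaef}, the claim follows in both cases. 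Continuity of ${\rm ev}_y$ on either space is inherited from its continuity on each step of the corresponding inductive limit, established in the preceding paragraphs. I do not expect a genuinely serious obstacle: the one structural ingredient, the stability of ``regular at infinity'' under limits, is already in hand, and the rest is a transcription of Proposition~\ref{p.Montel.1}; the steps most prone to slips are the bookkeeping of the Besov smoothness indices in the displayed estimate and keeping the dualities of Theorem~\ref{t.Groth.Fr} and Corollary~\ref{c.Groth.FS} correctly lined up.
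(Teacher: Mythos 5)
Your proposal is correct and follows what is clearly the intended (but omitted) argument: the paper states Proposition~\ref{p.Montel.2} without any proof at all, evidently treating it as the dual counterpart of Proposition~\ref{p.Montel.1} together with the identification of $\tilde S_{A^*}^{s,p}(\hat X\setminus\overline D)$ as a Banach space recorded immediately after Lemma~\ref{l.Green.A*}. Your line of attack --- closed subspace of a Montel Fr\'echet space for the first two; the Green representation \eqref{eq.Green.M.C.2}/\eqref{eq.G*} for continuity of evaluation in the Banach/Hilbert cases; and the Grothendieck dualities of Theorem~\ref{t.Groth.Fr} and Corollary~\ref{c.Groth.FS} combined with ``strong dual of Montel is Montel'' for the DF-spaces --- is exactly the natural dualization of the \ref{p.Montel.1} argument, and you also correctly note that the stability of ``regular at infinity'' under limits was already settled in the text following \eqref{eq.Green.2.compl}. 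The only superfluous phrase is the remark that $\|\cdot\|_{s,p,C^A}$ is ``equivalent to the original norm'': by the paper's definition of $\tilde S^{s',q}_{A^*}(\hat X\setminus\overline D)$ this \emph{is} the norm, so there is nothing to compare, but this is harmless.
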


\begin{cor} \label{c.repr.ker} Let $A$ be an elliptic operator on $X$ with the UCP for both $A^*$ and 
$A$, $D$ be a relatively compact domain  in $X$. If  $X\setminus D$ has no compact components in 
$X$ then the fundamental solution $\Phi$ represents the reproducing kernel for any topological space ${\mathcal B}$ 
from Propositions \ref{p.Montel.1} and \ref{p.Montel.2} in the sense that for each $x\in D$ we have 
\begin{equation} \label{eq.repr.ker}
u(x) =\langle  \Phi^* (x, \cdot), u\rangle _{{\mathcal B}^*, {\mathcal B}} = 
\langle  \Phi^* (x, \cdot), u\rangle _{{\mathrm Gr}}  \mbox{ for all } u \in {\mathcal B}. 
\end{equation}
\end{cor}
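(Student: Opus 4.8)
The plan is to read the statement off the Grothendieck type dualities established above. Theorems \ref{t.Groth.Fr}, \ref{t.Groth.sp} and Corollaries \ref{c.Groth.Fr}, \ref{c.Groth.FS} identify, for each $\mathcal{B}$ in the list, the strong dual $\mathcal{B}^*$ with a space of solutions of $A^*$ (or of $A$) on the complement of $D$ (or of $\overline D$) and, at the same time, identify the pairing $\langle\cdot,\cdot\rangle_{\mathcal{B}^*,\mathcal{B}}$ with the Grothendieck pairing $\langle\cdot,\cdot\rangle_{\mathrm{Gr}}$. Hence, once $\Phi^*(x,\cdot)$ is recognised as an element of $\mathcal{B}^*$, the equality $u(x)=\langle\Phi^*(x,\cdot),u\rangle_{\mathrm{Gr}}$ becomes just a reformulation of the second Green formula.

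The first step is to fix $x\in D$ and to check that $\Phi^*(x,\cdot)$ (more precisely, each of its columns, which represent the scalar evaluation functionals ${\rm ev}^{(i)}_x$, $1\leq i\leq k$) lies in $\mathcal{B}^*$ for every $\mathcal{B}$ of Propositions \ref{p.Montel.1} and \ref{p.Montel.2}. By \eqref{eq.bilateral} one has $A^*_y\Phi^*(x,y)=\delta_{x-y}$, so $\Phi^*(x,\cdot)\in S_{A^*}(X\setminus\{x\})$; since $x\in D$ and $\partial D\in C^\infty$, the matrix $\Phi^*(x,\cdot)$ is of class $C^\infty$ in a whole neighbourhood of $X\setminus D$, whence it defines germs in $S_{A^*}(X\setminus D)$ and $S_{A^*}(X\setminus\overline D)$ and, being smooth up to $\partial D$, it belongs to $\tilde S_{A^*}^{m-s,p'}(\hat X\setminus\overline D)$ for all $s\in\mathbb Z$, $1<p<+\infty$, as well as to $S_{A^*}^{(F)}(\hat X\setminus\overline D)$. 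It then remains to verify that $\Phi^*(x,\cdot)$ is regular at infinity in the sense of \eqref{eq.reg.inf}: outside any compact neighbourhood of $\overline D$ each of its columns is a potential of $A^*$ of a constant vector times $\delta_x$, i.e. a function of the form \eqref{eq.Groth.reg.inf} written for $A^*$, so by \cite[Corollary 1, p.258]{Grot1} it is regular at infinity. Thus $\Phi^*(x,\cdot)\in\mathcal{B}^*$ throughout the list.

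Next, the duality theorems guarantee that the continuous functional on $\mathcal{B}$ represented by $\Phi^*(x,\cdot)\in\mathcal{B}^*$ acts exactly as $u\mapsto\langle\Phi^*(x,\cdot),u\rangle_{\mathrm{Gr}}$: as $-\int_{\partial\Omega}G_A(\Phi^*(x,\cdot),u)$ in the Fr\'echet and DF cases (formula \eqref{eq.pairing.Gr}, with any admissible auxiliary Lipschitz domain $\Omega$) and as $-\sum_{j=0}^{m-1}\langle C^A_{m-1-j}\Phi^*(x,\cdot),B_ju\rangle_{\partial D}$ in the Sobolev cases (formula \eqref{eq.pairing.Gr.fin}); this gives the second equality in \eqref{eq.repr.ker}. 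The final step is to evaluate this functional. Choosing the auxiliary domain so that $x\in\Omega$, the second Green formula \eqref{eq.Green.2} yields $-\int_{\partial\Omega}G_A(\Phi^*(x,\cdot),u)=u(x)$ for $u\in S_A(D)$, and the same with the obvious modifications for $S_A(D)\cap[C^\infty(\overline D)]^k$, $S_A(\overline D)$, and for the complement spaces of Proposition \ref{p.Montel.2} via Lemma \ref{l.Green.A*} and formula \eqref{eq.Green.M.C.2}; for $u\in S_A^{s,p}(D)$, $s\in\mathbb Z$, $1<p<+\infty$, Theorem \ref{t.Green} together with the definition \eqref{eq.G} of $\mathcal{G}$ gives $\langle\Phi^*(x,\cdot),u\rangle_{\mathrm{Gr}}=-\mathcal{G}(\oplus_{j=0}^{m-1}B_ju)(x)=u(x)$; and for $u\in S_A^{(F)}(D)$ the same follows from Corollary \ref{c.Green}. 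Combining, we obtain $u(x)=\langle\Phi^*(x,\cdot),u\rangle_{\mathrm{Gr}}=\langle\Phi^*(x,\cdot),u\rangle_{\mathcal{B}^*,\mathcal{B}}$, i.e. \eqref{eq.repr.ker}.

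I expect the only genuinely delicate point to be bookkeeping rather than substance: one must keep straight the various adjoint/transposition conventions — $\Phi$ versus $\Phi^*$, the Green operators $G_A$ versus $G_{A^*}$, the dual Dirichlet system $C^A$ of Lemma \ref{eq.dual.Dir}, and the orientation of $\partial D$ — so that the signs in \eqref{eq.Green.2}, \eqref{eq.Green.M.B.2}, \eqref{eq.pairing.Gr} and \eqref{eq.pairing.Gr.fin} align consistently, and one must make fully rigorous the claim that $\Phi^*(x,\cdot)$ satisfies \eqref{eq.reg.inf}. Both are routine given \eqref{eq.bilateral}, Lemma \ref{l.Green.A*} and the characterisation from \cite{Grot1}, but they are where care is needed.
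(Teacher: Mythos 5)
Your proposal matches the paper's proof in substance: both proceed by first observing from \eqref{eq.bilateral} that $\Phi^{*}(x,\cdot)$, for fixed $x\in D$, solves $A^{*}$ in the $y$-variable away from $x$ and hence defines an element of $S_{A^*}(\hat X\setminus D)$ (``regular at infinity'' via the Grothendieck characterisation \eqref{eq.Groth.reg.inf}), then identify the evaluation functional with the Grothendieck pairing using Theorems \ref{t.Groth.Fr}, \ref{t.Groth.sp} and Corollaries \ref{c.Groth.Fr}, \ref{c.Groth.FS}, and finally appeal to the second Green formula \eqref{eq.Green.2}/\eqref{eq.Green.M.B.2} to compute the action. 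The paper's version is terser (it leaves the verification that $\Phi^*(x,\cdot)\in\mathcal B^*$ implicit), while you spell out the bookkeeping, but the route is the same and your argument is correct.
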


\begin{proof} By the definition of the bilateral fundamental solution to $A$, we have 
\begin{equation} \label{eq.fund.A*A}
A_x \Phi (x,y) = 0, \,\, A^*_y \Phi^* (x,y) = 0 \mbox{ for } x\ne y.
\end{equation}
Hence, the columns of the matrix $\Phi (x,y) $  are solutions to the operator $A$  
with respect to the variable $x $ in $D$ and the rows of the matrix $\Phi (x,y) $ are solutions to the 
operator $A^*$  with respect to the variable $y $ in $X\setminus D$, 'regular at infinity'. Thus, 
 the statement follows from Green formula \eqref{eq.Green.2} 
(see Lemma \ref{eq.dual.Dir} and Theorem \ref{t.Green}) and the dualities stated in 
Theorems \ref{t.Groth.Fr} and \ref{t.Groth.sp}, Corollaries \ref{c.Groth.Fr} and \ref{c.Groth.FS}.
\end{proof}

Please, note that for $p=2$ the matrix $\Phi (x,y) $ represents the reproducing kernel 
with respect to the Grothendieck pairing \eqref{eq.pairing.Gr.fin} 
 but not with respect to the inner product of the space $S_A^{s,2} (D)$. A situation where 
\eqref{eq.pairing.Gr.fin} may be identified with an inner product of the space $S_A^{m,2} (D)$ 
is indicated in \cite{NaciShla}. 

\begin{rem} \label{r.smoothness} We finish the section with the note on the smoothness 
of the boundary $\partial D$. Actually, for a fixed number $s$ it could be finite:
$\partial D \in C^s$ if $m=|s| \in \mathbb N$. If $m=1=|s|$ it is sufficient to ask for domains 
with Lipschitz boundaries. But, of course, dealing with the space $S_A ^{(F)}  (D)$ one needs 
domain $D$ to have $C^\infty$-boundary.
\end{rem}

\section{Sparse minimizing in spaces $S^{s,p}_A (D)$.}
\label{s.sparse} 

\subsection{On existence and uniqueness of minimizers}

Let us begin with an  existence theorem for Problem \ref{pr.A.s.p.D}. 

\begin{prop} \label{p.exist.s.p.D} 
Let $A$ be an elliptic operator on $X$ with the UCP for both $A^*$ and $A$, 
$D$ be a relatively compact domain  in $X$ with $\partial D\in C^\infty$ and $s\in {\mathbb Z}$, 
$1<p <+\infty$. Let also the complement $X \setminus D$ have no  compact components in $X$.
Then Problem \ref{pr.A.s.p.D} has at least one  minimizer $ u^{(0)} = u^{(0)}_N \in S_A^{s,p} (D)$. 
Moreover, if the functional $F$ is strictly convex, then the minimizer is unique. 
\end{prop}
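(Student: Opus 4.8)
The plan is to argue via the direct method of the calculus of variations, exploiting the reflexivity of the Banach space $S_A^{s,p}(D)$ established in Theorem \ref{t.Groth.sp} together with the hypotheses on $F$. First I would observe that the functional $\mathcal F = \mathcal F_N$ in \eqref{eq.F.s.p.D} is proper, i.e. not identically $+\infty$: since $F$ is proper there is some $w \in {\mathbb R}^N$ with $F(w) < +\infty$, and since $\mathcal M$ maps $S_A^{s,p}(D)$ \emph{onto} ${\mathbb R}^N$, there is a $u$ with $\mathcal M u = w$, so $\mathcal F(u) = F(w) + b_N\|u\|_{[W^{s,p}(D)]^k} < +\infty$. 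Hence $\mu := \inf_{u \in S_A^{s,p}(D)} \mathcal F(u) < +\infty$, and since $F$ is coercive and bounded below (coercivity of a convex lower semi-continuous $F$ on ${\mathbb R}^N$ forces $F \geq c$ for some constant $c$), we have $\mathcal F(u) \geq c + b_N \|u\|_{[W^{s,p}(D)]^k}$, so $\mu > -\infty$.

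Next I would take a minimizing sequence $\{u_\ell\} \subset S_A^{s,p}(D)$ with $\mathcal F(u_\ell) \to \mu$. The estimate $\mathcal F(u_\ell) \geq c + b_N\|u_\ell\|_{[W^{s,p}(D)]^k}$ shows $\{\|u_\ell\|_{[W^{s,p}(D)]^k}\}$ is bounded, so $\{u_\ell\}$ is a bounded sequence in the reflexive Banach space $S_A^{s,p}(D)$; passing to a subsequence, $u_\ell \rightharpoonup u^{(0)}$ weakly for some $u^{(0)} \in S_A^{s,p}(D)$. Then I would check lower semi-continuity of $\mathcal F$ along this weak limit: the norm $\|\cdot\|_{[W^{s,p}(D)]^k}$ is weakly lower semi-continuous (being a norm on a Banach space, it is convex and norm-continuous, hence weakly l.s.c.), and $\mathcal M$ is linear continuous into the finite-dimensional space ${\mathbb R}^N$, so $\mathcal M u_\ell \to \mathcal M u^{(0)}$ (weak convergence in ${\mathbb R}^N$ is norm convergence), whence $F(\mathcal M u^{(0)}) \leq \liminf_\ell F(\mathcal M u_\ell)$ by lower semi-continuity of $F$. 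Combining, $\mathcal F(u^{(0)}) \leq \liminf_\ell \mathcal F(u_\ell) = \mu$, so $u^{(0)}$ is a minimizer.

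For uniqueness under strict convexity of $F$: I would note that $u \mapsto \|u\|_{[W^{s,p}(D)]^k}$ is convex and $u \mapsto F(\mathcal M u)$ is convex, so $\mathcal F$ is convex; if $u^{(0)}$ and $\tilde u$ are two distinct minimizers, then along the segment the term $F(\mathcal M \cdot)$ is strictly convex \emph{provided} $\mathcal M u^{(0)} \neq \mathcal M \tilde u$, giving $\mathcal F\big(\tfrac12(u^{(0)}+\tilde u)\big) < \mu$, a contradiction; hence $\mathcal M u^{(0)} = \mathcal M \tilde u$. Here I expect the main subtlety: strict convexity of $F$ alone does not control the behaviour in directions killed by $\mathcal M$, so to get genuine uniqueness of $u^{(0)}$ one also needs strict convexity (or at least, in the relevant sense, uniform convexity) of the norm $\|\cdot\|_{[W^{s,p}(D)]^k}$ — which, as the introduction recalls, holds for $s\in{\mathbb Z}_+$, $1<p<+\infty$. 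I would therefore state the uniqueness conclusion with this understanding, using that $S_A^{s,p}(D)$ inherits a strictly convex (uniformly convex) norm so that $\mathcal F$ is strictly convex on the whole space, which immediately forces $u^{(0)} = \tilde u$. The main obstacle is thus not the existence part (a routine direct-method argument once reflexivity from Theorem \ref{t.Groth.sp} is in hand) but making the uniqueness claim precise about which convexity hypothesis on the norm is actually being invoked.
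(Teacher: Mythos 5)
Your existence argument is essentially the paper's: the paper also invokes properness of $F$ to see $\inf \mathcal F < +\infty$, convexity/coercivity/lower semi-continuity to see it is bounded below, then weak compactness of balls in the reflexive Banach space $S_A^{s,p}(D)$ and weak lower semi-continuity of convex l.s.c.\ functionals. You fill in the routine steps (bounded minimizing sequence, weak limit, norm convergence of $\mathcal M u_\ell$ in the finite-dimensional target) that the paper cites as immediate; same route, more detail.

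On uniqueness your analysis is sharper than the paper's proof, which in fact does not address the uniqueness claim at all --- it only establishes existence. You are right that strict convexity of $F$ alone forces $\mathcal M u^{(0)} = \mathcal M \tilde u$ for any two minimizers $u^{(0)}, \tilde u$, and once $\mathcal M u^{(0)} = \mathcal M \tilde u$ one also gets $\|u^{(0)}\|_{[W^{s,p}(D)]^k} = \|\tilde u\|_{[W^{s,p}(D)]^k}$ from equality of the optimal values of $\mathcal F$; concluding $u^{(0)} = \tilde u$ then needs the \emph{space} $[W^{s,p}(D)]^k$ to be strictly convex (so that two distinct vectors of equal norm have midpoint of strictly smaller norm), which is precisely the distinction between strict convexity of a space and strict convexity of its norm-as-functional that the paper itself makes in Remark \ref{r.unique}. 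The introduction only asserts uniform (hence strict) convexity for $s \in \mathbb Z_+$, whereas Proposition \ref{p.exist.s.p.D} is stated for all $s \in \mathbb Z$; so your caution there mirrors an imprecision in the paper. If one accepts that $[W^{s,p}(D)]^k$ is strictly convex for all $s \in \mathbb Z$, $1<p<\infty$ (which is in fact standard since these spaces are uniformly convex), the uniqueness argument closes exactly as you describe; otherwise the proposition's uniqueness clause is not supported by its proof.
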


\begin{proof} Since   
$F$ is proper,  the infimum of Problem \ref{pr.A.s.p.D} does 
not equal to $+\infty$. Next, as  $F$ is convex, lower semi-continuous and coercive, it is bounded 
from below, see for instance \cite[Theorem 2.11 and Remark 2.13]{BaPr} for reflexive 
Banach spaces, i.e.  infimum of Problem  \ref{pr.A.s.p.D}  is finite. Then the statement 
follows immediately from the weak compactness of balls  in reflexive Banach spaces 
(see Banach-Alaoglu theorem \cite[Theorem 3.15]{Rud_FA}) and weak lower semi-continuity for convex 
lower semicontinuous functionals \cite[Proposition  2.10]{BaPr}. 
\end{proof}

\begin{rem} \label{r.unique}
A normed space is called strictly convex if its balls are strictly convex. 
Note that a norm is not a strictly convex functional even if the space is strictly convex (the strong 
inequality fails on vectors satisfying $u_2 = a\, u_1$ with $a\geq 0$). However, 
if ${\mathcal B}$ is a reflexive strictly convex Banach space then, given  $u_0\in {\mathcal B} $, the 
functional $F (u)= {\mathcal J}\big( \|u - u_0\|_{\mathcal B}\big)$ is strictly convex for any strictly 
increasing strictly convex function ${\mathcal J}: [0,+\infty) \to [0,+\infty)$. In particular, given vector 
$x^{(0)} \in {\mathbb R}^N$, the functional $F (x)= (1/q) \|x - x^{(0)}\|^q_{{\mathbb R}^N_q}$ is 
strictly convex, if $1<q<+\infty$.
\end{rem}

Now, recall that a point $u$ of  a convex set $K$ of a locally convex vector space is called 
an extreme point of $K$ if a convex combination 
$$
u = a u_1 + (1 - a) u _2, \, a\in [0,1], \, u_1 , u_2 \in K,
$$ 
is possible in the situation where $u_0=u_1 = u_2$, only. The set of extreme points of $K$ is usually 
denoted by ${\rm Ext} (K)$. 

In the following statement $B_{s,p,D} (0,1)$ stands for the unit ball in the space $[W^{s,p} (D)]^k$.  

\begin{prop} \label{p.convex.sparse.s.p.D}
Under the hypothesis of Proposition \ref{p.exist.s.p.D}, 
if there exists an element $\hat u \in S^{s,p}_A (D)$ 
 such that  $F$ is continuous at ${\mathcal M}_N \hat u$ then there exists   a 
minimizer $\tilde u^{\sharp} = \tilde u^{\sharp}_N $ of Problem \ref{pr.A.s.p.D} in the following form 
\begin{equation} \label{eq.convex.sparse}
\tilde u^\sharp = \|\tilde u^\sharp\|_{[W^{s,p} (D)]^k} \sum_{i=1}^r \gamma_i u_i ,
\end{equation}
where $1\leq r \leq N$, $0<\gamma_i < 1$ with  
$\sum_{i=1}^r \gamma_i =  1$ and $\{ u_i \}_{i=1}^r \subset 
{\rm Ext} (\overline B_{s,p,D} (0,1))$.
\end{prop}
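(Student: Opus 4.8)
The plan is to apply a representer-type theorem for convex minimization problems over reflexive Banach spaces — precisely the abstract scheme used in \cite[\S 3]{Bart_et_all} and \cite[\S\S 2, 3]{BreCar} — to the functional $\mathcal{F}_N$. The starting point is Proposition \ref{p.exist.s.p.D}, which already guarantees a minimizer $u^{(0)}$ exists; we must now produce one of the special form \eqref{eq.convex.sparse}. First I would observe that if $u^{(0)} = 0$, the claim is essentially vacuous (or the hypothesis of continuity at $\mathcal{M}_N \hat u$ combined with $F$ proper forces $0$ to already be expressible trivially), so assume $\lambda_0 := \|u^{(0)}\|_{[W^{s,p}(D)]^k} > 0$. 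The key structural fact is that $\mathcal{F}_N$ separates into the term $F(\mathcal{M}_N u)$, which depends on $u$ only through the finite-rank operator $\mathcal{M}_N : S^{s,p}_A(D) \to \mathbb{R}^N$, and the norm term $b_N \|u\|$. This is exactly the setting where a minimizer can be sought among convex combinations of at most $N$ extreme points of a ball.

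The main steps I would carry out are as follows. \emph{Step 1 (reduction to a fixed level set of the norm).} Let $\lambda_0 = \|u^{(0)}\|$; since any other element $u$ with $\|u\| = \lambda_0$ and $\mathcal{M}_N u = \mathcal{M}_N u^{(0)}$ is again a minimizer, it suffices to minimize $F(\mathcal{M}_N u)$ over the set $\{u \in S^{s,p}_A(D) : \|u\| \le \lambda_0\}$ and show the minimum is attained at a point of the prescribed form; one checks the constrained and unconstrained problems share minimizers by a standard argument using that $u^{(0)}$ is optimal (if $\|u\| < \lambda_0$ could do strictly better on $F$, rescaling toward $u^{(0)}$ would contradict optimality — here convexity and the continuity of $F$ at $\mathcal{M}_N\hat u$ are used to rule out degenerate behavior). \emph{Step 2 (compactness and Krein--Milman).} The closed ball $\overline{B}_{s,p,D}(0,1)$ intersected with the closed subspace $S^{s,p}_A(D)$ is weakly compact and convex, because $S^{s,p}_A(D)$ is a reflexive Banach space (Theorem \ref{t.Groth.sp}) and the ball is weakly closed; by Krein--Milman it is the closed convex hull of its extreme points, and these extreme points all lie on the unit sphere. \emph{Step 3 (the finite-dimensional image).} The image $C := \mathcal{M}_N(\overline{B}_{s,p,D}(0,1) \cap S^{s,p}_A(D)) \subset \mathbb{R}^N$ is a compact convex set, and $F$ restricted to $\lambda_0 C$ attains its minimum at some extreme point $z^\sharp$ of $\lambda_0 C$ (convex $F$ need not attain its minimum at an extreme point in general, but one uses that the \emph{minimizing set} of a convex function over a compact convex polytope-like set — here $C$ is a compact convex subset of $\mathbb{R}^N$ — contains an extreme point of that set; this is the Bauer-type minimum principle applied carefully, or one invokes directly \cite[Theorem 2.5]{Bart_et_all} / the relevant result in \cite{BreCar}). \emph{Step 4 (Carathéodory).} An extreme point $z^\sharp$ of $\lambda_0 C$ is the image under $\mathcal{M}_N$ of a face of the ball whose preimage, by Carathéodory's theorem in $\mathbb{R}^N$, contains a point $\tilde u^\sharp$ expressible as $\lambda_0 \sum_{i=1}^r \gamma_i u_i$ with $r \le N$, $\gamma_i \in (0,1)$, $\sum \gamma_i = 1$, and $u_i \in \mathrm{Ext}(\overline{B}_{s,p,D}(0,1))$; the precise count $r\le N$ comes from Carathéodory applied to a point in the $(N-1)$-dimensional convex set $C$ (or its appropriate face). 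Finally, $\|\tilde u^\sharp\| \le \lambda_0$ and $\mathcal{M}_N \tilde u^\sharp = z^\sharp/\lambda_0 \cdot \lambda_0$ realizes the optimal $F$-value, while the norm term is $\le b_N\lambda_0 = b_N\|u^{(0)}\|$, so $\tilde u^\sharp$ is a minimizer; a small additional argument shows $\|\tilde u^\sharp\| = \lambda_0$ in the nondegenerate case, giving the stated form with $\|\tilde u^\sharp\|_{[W^{s,p}(D)]^k}$ as the leading scalar.

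The hard part will be Step 3: a convex function on a compact convex set does \emph{not} in general attain its minimum at an extreme point (that is the Bauer maximum principle, which goes the wrong way for minima). What saves the argument is that we are minimizing over the finite-dimensional compact convex set $C \subset \mathbb{R}^N$, and one shows the minimizing face of $F$ on $C$ is itself a nonempty compact convex set, hence has an extreme point, which is then also an extreme point of $C$ — this is where the finite-dimensionality and the explicit appeal to the representer machinery of \cite{Bart_et_all, BreCar} is essential, and where one must be careful that the properness and continuity hypothesis on $F$ (continuity at some $\mathcal{M}_N\hat u$) prevents the minimizing set from escaping to the boundary of the effective domain of $F$ in a way that would break compactness. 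I would state Step 3 as a lemma citing the abstract representer theorem directly rather than reproving it, and spend the remaining effort verifying that $S^{s,p}_A(D)$ with its norm satisfies the hypotheses of that abstract theorem (reflexivity from Theorem \ref{t.Groth.sp}, which is the only nontrivial input).
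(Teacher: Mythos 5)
Your toolkit (Krein--Milman, Carath\'eodory, reduction to the sphere $\|u\|=\|u^{(0)}\|$) is the same as the paper's, but Step~3 contains a genuine gap that the rest of the sketch cannot repair. You assert that the argmin of a convex function $F$ over a compact convex set $C \subset \mathbb{R}^N$ contains an extreme point of $C$, and that the argmin set is a face. Both claims are false: with $C=[0,1]$ and $F(z)=(z-\tfrac{1}{2})^2$ the argmin $\{\tfrac{1}{2}\}$ is neither a face of $C$ nor an extreme point. There is no ``Bauer-type minimum principle''; Bauer's principle locates \emph{maxima} of convex functions at extreme points, while a convex function on a ball can perfectly well minimize at the center. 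So the route through extreme points of the image $C={\mathcal M}_N\big(\overline B_{s,p,D}(0,1)\cap S^{s,p}_A(D)\big)$ cannot work, and the patch you propose (that the ``minimizing face'' yields extreme points of $C$) fails for the same reason -- the argmin set is not a face of $C$.

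The correct argument, and the role of the Fenchel duality the paper cites and you omit, runs entirely on the preimage side. Given a minimizer $u^{(0)}$ from Proposition~\ref{p.exist.s.p.D}, set $K=\bigl\{u\in S^{s,p}_A(D):\|u\|_{[W^{s,p}(D)]^k}\le\|u^{(0)}\|_{[W^{s,p}(D)]^k},\ {\mathcal M}_N u={\mathcal M}_N u^{(0)}\bigr\}$. Every $u\in K$ has ${\mathcal F}_N(u)\le{\mathcal F}_N(u^{(0)})$, and minimality forces equality, so $K$ consists of minimizers, all with $\|u\|=\|u^{(0)}\|$. Since $S^{s,p}_A(D)$ is reflexive (Theorem~\ref{t.Groth.sp}), $K$ is weakly compact and convex, and Krein--Milman gives an extreme point $\tilde u^\sharp$ of $K$. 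The Fenchel optimality condition then produces a dual certificate ${\mathcal M}_N^*\vec h^\sharp$ exposing a face of the ball on which $K$ sits; because $\tilde u^\sharp$ is extreme in $K$ and ${\mathcal M}_N^{-1}({\mathcal M}_N u^{(0)})$ has codimension at most $N$, that face meets the affine subspace only at $\tilde u^\sharp$ and hence has finite affine dimension, and Carath\'eodory \emph{in that face} yields the decomposition into at most $N$ extreme points of $\overline B_{s,p,D}(0,1)$. This face argument on the ball, not on the finite-dimensional image $C$, is what \cite[Theorem 3.3]{BreCar} carries out and what your proposal is missing.
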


\begin{proof} Follows from the standard arguments of Convex Analysis, using Krein--Milman theorem 
\cite[Theorem 3.22]{Rud_FA} and  Carath\'eodory theorem \cite{Carath} with the use of Fenchel' 
duality theorem and the related optimality condition, cf.
for instance, \cite[Theorem 3.3]{BreCar} even in a more general situation.
\end{proof} 

In papers \cite{Bart_et_all}, \cite{BreCar}, the statement similar to Proposition \ref{p.convex.sparse.s.p.D}
were used to obtain a sparse decomposition via fundamental solutions for minimization problem in the 
space of distributions of high order of singularity where the set extreme points of the unit ball 
consists of $\delta$-functionals. In our situation,  for $s\geq 0$ the space $S_A^{s,p} (D)$  
can not contain $\delta$-functionals. Moreover, it is strictly convex for $s\geq 0$ 
and then the set of extreme points of the unit ball coincides with the unit sphere. Hence, 
this kind of statements  can not help to prove the existence of 
sparse minimizers to Problem \ref{pr.A.s.p.D} in form \eqref{eq.sparse.u},  in principle.

\subsection{Existence of sparse minimizers}

\label{s.sparse.min}
As we have seen in section \S \ref{s.Grothendieck} (see formulae \eqref{eq.fund.A*A}), 
the fundamental solution  $\Phi (\cdot ,y)$ belongs  to $S_A(U)$ for each $y \not \in U \subset X$.  
%and $\Phi (\cdot ,y) \subset S_A(K)$ for each $y \not \in K$. 
This  leads us to 
the following statement, where ${\mathcal L } (\{ b_\beta\}_{\beta\in B})$ stands for 
the  linear hull of a family  $\{ b_\beta\}_{\beta\in B}$ in a linear space.

\begin{lem} \label{l.1}
Under the hypothesis of Proposition \ref{p.exist.s.p.D}, 
we have 
$$
\inf_{u \in S^{s,p}_A (D) } {\mathcal F} (u)  = 
\inf_{u   \in {\mathcal L } \big(\{ \Phi_j (\cdot,y)\}_{1\leq j \leq k, \atop y \not \in \overline D}\big)} 
{\mathcal F} (u) ,
$$
where $\Phi_j (\cdot,y)$ is the $j$-th column of the matrix $\Phi (\cdot,y)$. 
\end{lem}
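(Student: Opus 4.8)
The plan is to show that the linear hull $\mathcal{L}\big(\{\Phi_j(\cdot,y)\}_{1\le j\le k,\ y\notin\overline D}\big)$ is dense in $S^{s,p}_A(D)$; once density is established, the equality of infima follows from the continuity of $\mathcal{F}$ on $S^{s,p}_A(D)$ (the map $u\mapsto F(\mathcal{M}u)$ is continuous where $F$ is finite, and $u\mapsto b_N\|u\|_{[W^{s,p}(D)]^k}$ is continuous, so $\mathcal{F}$ is lower semi-continuous and the infimum over a dense subset cannot exceed the infimum over the whole space; the reverse inequality is trivial since the linear hull is contained in $S^{s,p}_A(D)$). Actually one should be slightly careful: $F$ may take the value $+\infty$, so $\mathcal{F}$ need not be globally continuous, but it is lower semi-continuous, and $\inf$ over a dense set of a lower semi-continuous functional equals the global $\inf$ provided we can approximate a near-minimizer; since any minimizing sequence can be approximated in norm by elements of the linear hull and $\mathcal F$ is l.s.c., we get $\inf_{\mathcal L}\mathcal F\le\inf_{S^{s,p}_A}\mathcal F$. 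So the whole problem reduces to the density statement.

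To prove density I would use the Hahn--Banach theorem together with the Grothendieck duality of Theorem \ref{t.Groth.sp}. Suppose $f\in(S^{s,p}_A(D))^*$ vanishes on every $\Phi_j(\cdot,y)$ with $y\notin\overline D$ and $1\le j\le k$. By Theorem \ref{t.Groth.sp} there is a unique $v\in\tilde S_{A^*}^{m-s,p'}(\hat X\setminus\overline D)$ with $f(u)=\langle v,u\rangle_{\mathrm{Gr}}$ for all $u\in S^{s,p}_A(D)$. Applying $f$ to the columns $\Phi_j(\cdot,y)$ and recalling that $\Phi(\cdot,y)\in S^{s,p}_A(D)$ for fixed $y\notin\overline D$, the hypothesis $f(\Phi_j(\cdot,y))=0$ for all $j$ means
\[
-\sum_{j=0}^{m-1}\langle C^A_{m-1-j}v,\,B_j\Phi(\cdot,y)\rangle_{\partial D}=0\quad\text{for all }y\notin\overline D,
\]
which, by the (second) Green formula for $A^*$ in the exterior domain (Lemma \ref{l.Green.A*}, formula \eqref{eq.Green.M.C.2}), says precisely $v(y)=\tilde{\mathcal G}\big(\oplus_j C^A_{m-j-1}v\big)(y)=0$ for all $y\in X\setminus\overline D$. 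Hence $v=0$, so $f=0$. By the Hahn--Banach separation theorem the closure of $\mathcal{L}\big(\{\Phi_j(\cdot,y)\}\big)$ is all of $S^{s,p}_A(D)$, which is the desired density.

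The main obstacle is making the duality step rigorous on all the pieces: one must check that for fixed $y\notin\overline D$ the matrix $\Phi(\cdot,y)$ genuinely lies in $S^{s,p}_A(D)$ (this is because $\Phi(\cdot,y)\in[C^\infty(\overline D)]^k$ by the smoothness of $\Phi$ off the diagonal and $A_x\Phi(x,y)=0$ in $D$, cf. \eqref{eq.fund.A*A}), and that the pairing $\langle v,\Phi_j(\cdot,y)\rangle_{\mathrm{Gr}}$ is exactly the $j$-th component of $\tilde{\mathcal G}(\oplus_j C^A_{m-j-1}v)(y)$ evaluated via \eqref{eq.G*} — i.e. that the two Green-type representations $\mathcal G$ and $\tilde{\mathcal G}$ are genuinely adjoint through the Grothendieck pairing. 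This bookkeeping is where the Dirichlet systems $\{B_j\}$ and $\{C^A_j\}$ and the relation between $G_A$ and $G_{A^*}$ must be used carefully; once that identification is in hand, the vanishing of $f$ on the kernel translates term-by-term into $v\equiv0$ outside $\overline D$, and the argument closes. A secondary (minor) point is the l.s.c. argument for the infima when $F$ is only proper and not everywhere finite, handled as sketched above by approximating a minimizing sequence in the $[W^{s,p}(D)]^k$-norm.
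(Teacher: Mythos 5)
Your proof is correct, and it takes a genuinely different route from the paper's. The paper proves density by stacking two external approximation results from Tarkhanov's book: a Runge theorem giving $S_A(\overline D)$ dense in $S^{s,p}_A(D)$, and a rational-approximation theorem giving $\mathcal L(\{\Phi_j(\cdot,y)\})$ dense in $S_A(\overline D)$. You instead derive the density directly via Hahn--Banach and the Grothendieck duality of Theorem \ref{t.Groth.sp}: a functional $f$ annihilating all columns $\Phi_j(\cdot,y)$, $y\notin\overline D$, is identified with $v\in\tilde S_{A^*}^{m-s,p'}(\hat X\setminus\overline D)$, and the identity $\langle v,\Phi_r(\cdot,y)\rangle_{\rm Gr}=\big(\tilde{\mathcal G}(\oplus_j C^A_{m-j-1}v)\big)_r(y)=v_r(y)$ (from \eqref{eq.G*} and Lemma \ref{l.Green.A*}) forces $v\equiv 0$. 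This is exactly the computation that appears in the \textbf{injectivity} step of the paper's proof of Theorem \ref{t.Groth.sp}, so your argument is self-contained within the paper's own machinery and is arguably cleaner; the cost is that it requires the full-strength duality theorem and the $C^\infty$-smooth boundary, whereas the Runge-theorem route in principle carries its own separate hypotheses, but under the standing assumptions of Proposition \ref{p.exist.s.p.D} both routes apply.

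One remark on the passage from density to equality of infima: your lower-semicontinuity discussion runs in the wrong direction. If $u_n\to u_0$ with $u_n\in\mathcal L$, l.s.c.\ only gives $\mathcal F(u_0)\le\liminf\mathcal F(u_n)$, which does not bound $\inf_{\mathcal L}\mathcal F$ from above by $\mathcal F(u_0)$; what one actually needs is an upper estimate. The cleanest fix is to use that $\mathcal M$ is onto $\mathbb R^N$ and $\mathcal L$ is dense, so $\mathcal M(\mathcal L)=\mathbb R^N$ (a dense subspace has image dense, hence equal, in the finite-dimensional $\mathbb R^N$); then given $u_n\to u_0$ in $\mathcal L$ one corrects $u_n'=u_n+\sum_i(\mathcal M u_0-\mathcal M u_n)_i w_i$ with fixed $w_i\in\mathcal L$, $\mathcal M w_i=e_i$, so that $\mathcal M u_n'=\mathcal M u_0$ exactly and $u_n'\to u_0$ in norm, giving $\mathcal F(u_n')=F(\mathcal M u_0)+b_N\|u_n'\|\to\mathcal F(u_0)$ with no continuity hypothesis on $F$ at $\mathcal M u_0$. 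The paper's own proof also leaves this last step implicit, so this is not a gap relative to the paper, but your written justification of it would not survive scrutiny as stated.
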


\begin{proof} Indeed, by the very definitions of the related spaces, we have 
$$
{\mathcal L } (\{ \Phi_j (\cdot,y)\}_{1\leq j \leq k, \atop y \not \in \overline D}) \subset 
S_A (\overline D) \subset S^{s,p}_A (D). 
$$
Under the hypothesis of  this lemma we may use Runge type theorems, 
stating that the space $S_A (\overline D)$ is dense in 
$S^{s,p}_A ( D)$, see \cite[Theorem 8.2.2]{Tark36} for $s< m$ and 
\cite[Theorems 8.1.2 and 8.1.3]{Tark36} for $s\geq m$. %, \cite{Tark37}. 
Moreover,  by \eqref{eq.bilateral} all columns of the matrix 
$\Phi (x,y)$  belong to $S_A (\overline D)$ with respect to $x$ for each fixed $y \not \in \overline D$.
Again, theorems on rational approximation of solutions to elliptic systems imply that 
the linear hull ${\mathcal L } (\{ \Phi_j (\cdot,y)\}_{1\leq j \leq k, \atop y \not \in \overline D})$ is dense 
in $S_A (\overline D)$, see \cite[Ch. 5, Theorem 5.3.2]{Tark36}. %, \cite{Tark37}.
\end{proof}

\begin{rem}
Actually, it is known that,  under the hypothesis of  this lemma, it is possible to choose a countable set of point 
$\{ y^{(i)} \} \subset X \setminus \overline D$ with no limits point on $\partial D$ such that the linear hull 
${\mathcal L } (\{ \Phi_j (\cdot,y^{(i)})\}_{1\leq j \leq k, \atop i \in {\mathbb N}})$ 
is dense in $S_A (D)$, see \cite[Ch. 5, Theorem 5.3.2]{Tark36}.
%\cite{Tark36, Tark37}. 
This justifies somehow our hypothesis on the existence 
of a sparse minimizer $u^\sharp$ to Problem \ref{pr.A.s.p.D}  
in form \eqref{eq.sparse.u} and results of Section \S \ref{s.N.to.infty}.
\end{rem}

Note that
we have $kN$ uncertain constants  in the summation in 
our conjecture \eqref{eq.sparse.u} related to vectors $\{ \vec{C}_j \}_{j=1}^N \subset {\mathbb R}^{k}$. 
This reflect the fact that for $k>1$ the mapping ${\mathcal M} = {\mathcal M}_N$ may include the data related to 
particular components of a vector $u$ from $S_A ^{s,p}(D)$ but not to the solution $u$ itself. 
For this reason we shift to $kN$ data instead of $N$. 
With this purpose, we fix a set $v^{(\cdot)}_{N}$ of linearly independent elements $\{ v^{(j,r)} \}
_{1 \leq j\leq N, \atop 1\leq r\leq k} \subset (S^{s,p}_A (D))^*$.  
Theorem \ref{t.Groth.sp} yields that this system 
%$\{ v^{(j,r)} \} _{j\in {\mathbb N}, \atop 1\leq r\leq k}$ 
may be uniquely identified within $\tilde S^{m-s,p'}_{A^*} (\hat X \setminus D) $. 

Let us consider the mapping  ${\mathcal M} = {\mathcal M}_{kN} : S^{s,p}_A ( D) 
\to {\mathbb R}^{kN}$ in the following form:
\begin{equation} \label{eq.M.vj}
{\mathcal M} u= \big( \langle u , v^{(1,1)} \rangle, \dots \langle u , 
v^{(1,k)} \rangle, \dots 
\langle u , v^{(N,1)} \rangle , \dots \langle u, v^{(N,k)} \rangle \big) ,\, u \in S^{s,p}_A ( D).
\end{equation}
Now, we note that the columns of the fundamental matrix $\Phi (x,  y^{(j)})$
belong to $S_A (\overline D)\subset S^{s,p}_A ( D)$ if $ y^{(j)}\in X\setminus \overline D$.  
If $ y^{(j)}\in \partial D$ then the columns belong to $S^{s,p}_A ( D)$ if, for instance,  
$|x-y^{(j)}|^{m-n}$ belongs to $[W^{s,p} ( D)]^k$, that we should take into the account 
formulating and proving the following theorem.

\begin{thm} \label{t.sparse.s.p.D.fund.vj} 
Under the hypothesis of Proposition \ref{p.exist.s.p.D}, let 
the mapping 
${\mathcal M}$ be defined by \eqref{eq.M.vj}. If 
there exist an element $\hat u \in S^{s,p}_A (D)$ 
 such that  $F $ is continuous at ${\mathcal M}\hat u$ then,  
given linear  independent vectors $\{ v^{(j,r)}\}_{1\leq j \leq N, \atop 1\leq r \leq k} 
\subset (S^{s,p}_A ( D))^*$, 
there is a sparse minimizer $u^{\sharp} = u^{\sharp} _{kN}$
 to Problem \ref{pr.A.s.p.D} in the form  \eqref{eq.sparse.u}  
with  some set $ y^{(\cdot)} _N$  of points $ y^{(1)},  \dots y^{(N)} $ from $X \setminus 
 D$ and vectors 
$ (\vec {C} ^{(1)}, \dots \vec {C} ^{(N)}) $ from ${\mathbb R}^k$. 
\end{thm}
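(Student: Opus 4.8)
The plan is to reduce Problem~\ref{pr.A.s.p.D} with the mapping ${\mathcal M}={\mathcal M}_{kN}$ of \eqref{eq.M.vj} to a finite-dimensional problem in the dual space via the Grothendieck duality of Theorem~\ref{t.Groth.sp}, and then to identify the finitely many dual data vectors $v^{(j,r)}$ with point evaluations of a fundamental solution. First I would invoke Proposition~\ref{p.exist.s.p.D} to secure the existence of \emph{some} minimizer $u^{(0)}=u^{(0)}_{kN}\in S^{s,p}_A(D)$; since $F$ is continuous at ${\mathcal M}\hat u$ for some $\hat u$, the functional ${\mathcal F}$ is proper, convex, coercive and lower semi-continuous, and the Fenchel/Lagrange duality machinery (as in \cite[\S\S 2--3]{BreCar}, \cite[\S 3]{Bart_et_all}) applies. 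The key structural fact is that the constraint data are governed by only $kN$ continuous linear functionals $v^{(j,r)}\in(S^{s,p}_A(D))^*$, so at a minimizer the subdifferential inclusion $0\in b_N\,\partial\|u^{\sharp}\|_{[W^{s,p}(D)]^k}+{\mathcal M}^*\partial F({\mathcal M}u^{\sharp})$ shows that the ``dual certificate'' $-\,{\mathcal M}^* w$, $w\in\partial F({\mathcal M}u^{\sharp})\subset{\mathbb R}^{kN}$, lies in the $kN$-dimensional subspace ${\mathcal L}(\{v^{(j,r)}\})$ of the dual and is a subgradient of $b_N\|\cdot\|$ at $u^{\sharp}$.

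Next I would translate this dual certificate through the isometry of Theorem~\ref{t.Groth.sp}: each $v^{(j,r)}$ corresponds to an element $V^{(j,r)}\in\tilde S^{m-s,p'}_{A^*}(\hat X\setminus\overline D)$, and by \eqref{eq.Groth.reg.inf} each such solution regular at infinity coincides outside a compact neighbourhood of $\overline D$ with a potential of a compactly supported distribution. The sparsity step is then: the (at most $kN$-dimensional) linear span of the $V^{(j,r)}$ can be written, after a Runge/rational approximation argument as in Lemma~\ref{l.1} and \cite[Ch.~5, Theorem 5.3.2]{Tark36}, using finitely many poles $y^{(1)},\dots,y^{(N)}\in X\setminus D$ — in effect, replacing each $V^{(j,r)}$ by a linear combination of columns of $\Phi^*(\cdot,y^{(i)})$. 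Dually, this says that the restriction of the minimization to the subspace spanned by $\{\Phi_j(\cdot,y^{(i)})\}$ is lossless for the data-fitting term $F({\mathcal M}u)$, while the norm term can only decrease under projection onto it; combined with Lemma~\ref{l.1}, which already gives that the infimum over ${\mathcal L}(\{\Phi_j(\cdot,y)\}_{y\notin\overline D})$ equals the infimum over $S^{s,p}_A(D)$, this pins the minimizer down to a \emph{finite} such span.

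Finally, on the finite-dimensional space ${\mathcal L}(\{\Phi_j(\cdot,y^{(i)})\}_{1\le j\le k,\,1\le i\le N})$ the functional ${\mathcal F}$ is a proper convex coercive lower semi-continuous function of the $kN$ coefficients $\vec C^{(i)}\in{\mathbb R}^k$, so a minimizer $u^{\sharp}=u^{\sharp}_{kN}=\sum_{i=1}^{N}\Phi(\cdot,y^{(i)})\,\vec C^{(i)}$ of the form \eqref{eq.sparse.u} exists by the classical Weierstrass argument (coercivity follows from equivalence of the norm \eqref{eq.norm.A} and the continuity estimate \eqref{eq.cont.mapping}); by construction ${\mathcal F}(u^{\sharp})$ equals the value of the infimum over all of $S^{s,p}_A(D)$, so $u^{\sharp}$ is a genuine minimizer of Problem~\ref{pr.A.s.p.D}. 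The case $y^{(j)}\in\partial D$ is admitted only under the extra hypothesis that $|x-y^{(j)}|^{m-n}\in[W^{s,p}(D)]^k$, which guarantees $\Phi_j(\cdot,y^{(j)})\in S^{s,p}_A(D)$ so the representation stays inside the space.

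\textbf{Main obstacle.} The delicate point is not the abstract existence but the passage from ``the dual certificate lives in a $kN$-dimensional subspace of $\tilde S^{m-s,p'}_{A^*}(\hat X\setminus\overline D)$'' to ``the primal minimizer can be taken in the span of $kN$ kernels $\Phi(\cdot,y^{(i)})$ with poles \emph{off} $D$''. Here one must argue that approximating the $V^{(j,r)}$ by rational combinations of $\Phi^*(\cdot,y^{(i)})$ (Runge-type density) does not merely approximate but can be arranged \emph{exactly} on the relevant finite-dimensional data — equivalently, that one may choose $y^{(1)},\dots,y^{(N)}$ so that the $kN$ functionals $u\mapsto\langle u,\Phi^*(\cdot,y^{(i)})_r\rangle_{\rm Gr}=u_r(y^{(i)})$ span the same space of constraints as the $v^{(j,r)}$, or at least a space on which $F\circ{\mathcal M}$ is unaffected. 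This is exactly where the connectedness of the complement $X\setminus D$ and the UCP for $A,A^*$ enter, via \cite[Ch.~5, Theorem 5.3.2]{Tark36}; making this step rigorous (choosing the poles and controlling both the fitting term and the norm simultaneously) is the heart of the proof.
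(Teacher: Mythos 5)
Your outline gets the ambient machinery right (existence of a minimizer from Proposition~\ref{p.exist.s.p.D}, Fenchel duality to produce a dual certificate in the $kN$-dimensional span of the $v^{(j,r)}$, and Theorem~\ref{t.Groth.sp} to realize those functionals inside $\tilde S^{m-s,p'}_{A^*}(\hat X\setminus\overline D)$), and you correctly flag the sparsity step as the heart of the matter --- but you do not close it, and the route you propose for it cannot work as stated. Runge-type density (Lemma~\ref{l.1}, \cite[Ch.~5, Theorem~5.3.2]{Tark36}) gives \emph{approximation} of the $V^{(j,r)}$ by finite combinations of $\Phi^*(\cdot,y^{(i)})$, not an exact replacement; and your claim that the norm term ``can only decrease under projection'' onto $\mathcal L\bigl(\{\Phi_j(\cdot,y^{(i)})\}\bigr)$ is not justified in a general Banach space (there is no norm-decreasing linear projection onto an arbitrary finite-dimensional subspace of $[W^{s,p}(D)]^k$ that simultaneously preserves ${\mathcal M}u$). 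Combining Lemma~\ref{l.1} (infimum over the \emph{closed} span of all kernel columns) with a Weierstrass argument on a single finite span then does not deliver equality with the global infimum: you could minimize over a finite span whose value is strictly larger.

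The paper closes this gap by a different and more concrete device. Rather than trying to make the finitely many kernel columns span the same dual subspace as the $v^{(j,r)}$, it fixes any minimizer $u^{(0)}$, forms the $kN\times kN$ collocation matrix $W(v^{(\cdot)}_N,y^{(\cdot)}_N)$ of values $V^{(i)}(y^{(j)})$, and invokes Lemma~\ref{l.non.degen.vj}: by the UCP for $A^*$ and the linear independence of $v^{(\cdot)}_N$, the set of $(y^{(1)},\dots,y^{(N)})$ where $\det W$ vanishes has no interior in $(X\setminus\overline D)^N$. Choosing $y^{(\cdot)}_N$ off this thin set, one sets $\vec C = W^{-1}{\mathcal M}u^{(0)}$ and $u^\sharp=\sum_j\Phi(\cdot,y^{(j)})\vec C_j$; the second Green formula then gives $({\mathcal M})_i\Phi(\cdot,y^{(j)})=V^{(i)}(y^{(j)})$, so ${\mathcal M}u^\sharp = W\,W^{-1}{\mathcal M}u^{(0)}={\mathcal M}u^{(0)}$ \emph{exactly}, and the Fenchel optimality relation \eqref{eq.Fenchel.optimal.1} transfers to $u^\sharp$. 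This sidesteps both difficulties you identify: no dual-span matching is needed, and no projection argument is used. You should replace the Runge/projection step in your sketch with this interpolation step, keyed on the UCP-based non-degeneracy of the matrix $W$.
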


\begin{proof} First,  recall that 
the Fenchel conjugate for a  proper convex lower-semiconti\-nu\-ous
functional $f$ on ${\mathcal B}$ is defined as 
$$
f^* (v) = \sup_{u\in {\mathcal B}} \big( \langle v,u \rangle - f(u)\big) , \, v \in 
{\mathcal B}^*.
$$
According to the Fenchel duality theorem, 
\begin{equation} \label{eq.Fenchel.dual}
\inf_{u \in S_A ^{s,p} (D)} 
\big({\mathcal N} (u) + F ({\mathcal M} u) \big) = \min_{\vec{h} \in {\mathbb R}^{kN}} \big(
{\mathcal N}^*({\mathcal M}^* \vec{h}  )  + F^* (-\vec{h})  \big),
\end{equation}
where ${\mathcal N} (u)= b_N  
\|u \|_{[W^{s,p} (D)]^k}$, ${\mathcal N}^*$, $F^*$ are Fenchel's dual functionals 
for  ${\mathcal N}$ and $F$, respectively. 
Moreover if $\vec{h}^\sharp \in {\mathbb R}^{kN}$ and $u^{(0)} \in S_A ^{s,p} (D$) are  minimizing elements 
for the right-hand side and the left-hand side of  
\eqref{eq.Fenchel.dual}, respectively, then 
\begin{equation} \label{eq.Fenchel.optimal.0}
{\mathcal M}^* \vec{h}^\sharp \in \partial {\mathcal N} (u^{(0)}), \,  
{\mathcal M}  u^{(0)} \in \partial F^*  (-\vec{h}^\sharp)
\end{equation}
where $\partial {\mathcal N} (u^{(0)}) $ is a subgradient of the functional ${\mathcal N} $
at the point $u^{(0)}$ and $\partial F^* (\vec{h}^\sharp)$
is a subgradient of the functional $F^* $ at the point $\vec{h}^\sharp$, 
see \cite[Theorems 3.53, 3.54]{BaPr} or 
\cite[Theorem 3.3.5]{BL}, \cite[Theorem III.4 .1 and Proposition III.4.1]{EkTe}. 
Besides, the optimality conditions \eqref{eq.Fenchel.optimal.0} imply that 
the following properties are fulfilled:
\begin{equation*} 
\langle {\mathcal M}^* 
\vec{h}^\sharp , u^{(0)}\rangle = {\mathcal N} (u^{(0)}) + {\mathcal N}^* (
{\mathcal M}^*  \vec{h}^\sharp ), 
\end{equation*}
see, for instance, \cite[Proposition 2.33]{BaPr}. 

However, it is well known that  if $f(u) = \| u - u_0\|_{\mathcal B}$ in  a Banach space $\mathcal B$ then the 
Legendre--Fenchel conjugate for the functional $f$ is given by 
\begin{equation} \label{eq.Fenchel.norm}
f^* (v) = 
\left\{ 
\begin{array}{lll} + \infty & {\rm if} & \|v\|_{{\mathcal B}^*} >1, \\
\langle v,u_0\rangle & { \rm if} & \|v\|_{{\mathcal B}^*} \leq 1.
\end{array}
\right. 
\end{equation}
Thus, cf. \cite[Remark 2.13]{BreCar}, for similar arguments in locally convex spaces, 
\begin{equation} \label{eq.Fenchel.optimal.1}
\langle  \vec{h}^\sharp , {\mathcal M} u^{(0)}\rangle = 
\langle {\mathcal M}^* \vec{h}^\sharp , u^{(0)}\rangle = {\mathcal N} (u^{(0)}) 
= b_N\|u^{(0)}\|_{[W^{s,p} (D)]^k}.
\end{equation}

Next, using Theorem \ref{t.Groth.sp} we identify the 
system $\{ v^{(j,r)} \} _{j\in {\mathbb N}, \atop 1\leq r\leq k}$ as a linearly independent set of solutions 
from $\tilde S^{m-s,p'}_{A^*} (\hat X \setminus D) $. 
Then $\langle u , v^{(i,j)} \rangle = \langle u , v^{(i,j)} \rangle _{{\rm Gr}}$ and 
the adjont mapping ${\mathcal M}^*: ({\mathbb R}^{kN})^* 
\to (S^{s,p}_A (D))^*$ is given as follows:
$$
({\mathcal M}^* \vec{h}) (y) = \sum_{j=1}^N  \sum_{r=1}^k v^{(j,r)} (y) h_{j,r} 
\mbox{ for } \vec{h} = (\vec{h}_1, \dots  \vec{h}_N) \in {\mathbb R}^{kN}, 
$$
with $\vec{h}_j = (h_{j,1}, \dots h_{j,k}) \in {\mathbb R}^{k}$, 
because for all $u \in S^{s,p}_A (D)$ we have 
$$
\langle {\mathcal M} u , \vec{h} \rangle_{{\mathbb R}^{kN}} = \sum_{j=1}^N 
\sum_{r=1}^k \langle u , v^{(j,r)}\rangle _{{\rm Gr}} h_{j,r} =
\langle u , \sum_{j=1}^N  V^{(j)}\vec{h}_{j} \rangle_{{\rm Gr}} ,
$$
where $V^{(j)}$ is the $(k\times k)$-matrix with $r-$th column being equal to $v^{(j,r)}$.

We are looking for  a vector $u^\sharp$ in the form  \eqref{eq.sparse.u} 
where $\vec{C} = (\vec {C}  _1, \dots \vec {C} _N) $, $\vec {C} _j \in {\mathbb R}^k$,   
and the points $\{ y^{(j)}\}_{j=1}^N 
\subset X \setminus D$ satisfying the following relations:
\begin{equation} \label{eq.sparse.rel.1vj}
{\mathcal M} u^{\sharp} 
= \sum_{j=1}^{N} {\mathcal M} _{x} \Phi(x,y^{(j)}) \, \vec {C} _j 
= {\mathcal M} u^{(0)}, 
\end{equation}
\begin{equation} \label{eq.sparse.rel.2vj}
\langle  u^{\sharp}, {\mathcal M} ^* \vec{h}^\sharp \rangle_{{\rm Gr}} = 
\langle {\mathcal M}  u^{\sharp}, \vec{h}^\sharp \rangle_{{\mathbb R}^{kN}} = 
b_N \|u^{\sharp} \|_{[W^{s,p} (D)]^k} 
= b_N \|u^{(0)} \|_{[W^{s,p} (D)]^k}
\end{equation}
with a minimizer $u^{(0)}$ of Problem \ref{pr.A.s.p.D} granted by Proposition 
\ref{p.exist.s.p.D} 
and a minimizer $\vec{h}^\sharp$ of Fenchel's dual problem, see \eqref{eq.Fenchel.dual}.

Then, as $\{ y^{(j)}\}_{j=1}^N 
\subset X \setminus D$, by the definition of the fundamental solution, $u^{\sharp}
\in S_A^{s,p} (D)$. 
Next, by the very construction, $F ({\mathcal M}  u ^{(0)}) = F  ({\mathcal M} u^\sharp)$ 
and moreover, by \eqref{eq.Fenchel.optimal.1}, 
$$
b_N \|u^{(0)} \|_{[W^{s,p} (D)]^k} = 
\langle {\mathcal M} u ^{(0)}, h^\sharp \rangle =
\langle {\mathcal M}   u^{\sharp} , h^\sharp \rangle
 = b_N \|u^{\sharp} \|_{[W^{s,p} (D)]^k}
$$
Thus, under \eqref{eq.sparse.rel.1vj} and 
 \eqref{eq.sparse.rel.2vj}, $u^{\sharp} $ is a minimizer of Problem \ref{pr.A.s.p.D}, too, and 
it is left to indicate the sets $\{ y^{(j)} \} \subset X\setminus D$ and 
$\{ \vec{C}^{(j)} \} \subset {\mathbb R}^k$ providing these relations. 

If $u^{(0)} =0$ then we simply set $ u^{\sharp}  = \sum_{j=1}^N  \Phi(x,y^{(j)}) \, 0 = 0$ with any 
$\{ y^{(j)} \} \subset X\setminus  D$. 

Let us  continue with $u^{(0)}\ne 0$.

\begin{lem} \label{l.non.degen.vj}
Let ${\mathcal M} $ be given by \eqref{eq.M.vj} and $X\setminus \overline D$ be connected. 
Given set $v^{(\cdot)}_N$ of linearly independent system of  vectors from 
$ \tilde S^{m-s,p'}_{A^*} ( \hat X\setminus D)$, 
the  determinant $\det W  (y^{(\cdot)}_N, v^{(\cdot)}_N)$, 
$y^{(\cdot)}_N \subset X\setminus \overline D$ of the matrix 
\begin{equation} \label{eq.W.vj}
W (v^{(\cdot)}_N, y^{(\cdot)}_N) = 
\left( 
\begin{array}{ccccc}
 V^{(1)} (y^{(1)}) &  \dots &  V^{(N)} (y^{(1)})  \\
\dots & \dots &  \dots\\
V^{(1)} (y^{(N-1)}) &  \dots &   V^{(N)} (y^{(N-1)})  \\
 V^{(1)} (y^{(N)}) &  \dots &  V^{(N)} (y^{(N)})  \\
\end{array}
\right)
\end{equation}
may vanish, only if 
\begin{equation}\label{eq.degen.set}
( y^{(1)}, \dots y^{(N)} ) \in \Sigma_1 \times \Sigma_2 \times \dots \times \Sigma_N 
=\Sigma \subset ( X \setminus D)^N, 
\end{equation}
where each set $\Sigma_i$ has no open subsets in $X \setminus D$. 
\end{lem}

\begin{proof} 
Applying $(kN \times kN)$-matrix $W(v^{(\cdot)}_N , y^{(\cdot)}_N)$ 
to a vector $\vec{C} = (\vec{C}_1, \dots \vec{C}_N ) \in 
{\mathbb R}^{k N} $ with vector entries $\{ \vec{C}_j \}_{j=1}^N 
\subset {\mathbb R}^{k}$, we obtain a vector 
$$
\vec{\gamma} (y^{(1)}, \dots , y^{(N)}) = (\vec{\gamma}_1 (y^{(1)}), \dots \vec{\gamma}_N (y^{(N)}))
\in {\mathbb R}^{kN}
$$ 
where the $i$-th vector component $\vec{\gamma}_i (y^{(i)})\in {\mathbb R}^k$ 
is a linear combination of columns of matrices 
$V^{(1)} (y^{(i)}), \dots V^{(N)} (y^{(i)})$ (i.e. linear combination of elements from $v_N^{(\cdot)}$):
$$
\vec{\gamma}_i (y^{(i)}) = 
\sum_{j=1}^N V^{(j)} (y^{(i)}) \vec{C}_j .
$$
If this component vanishes for all $y^{(i)}$ from an open set $U_i \subset X \setminus \overline D$ 
then, as $X \setminus \overline D$ is connected and $A^*$ possesses the Unique Continuation 
Property, $\vec{\gamma} (y^{(i)})$ equals to zero on 
$X \setminus \overline D$. Since $v^{(\cdot)}_N$ is linearly independent, we conclude that all 
the vectors $\{ \vec{C}_j\}_{j=1}^N$ equal to zero, i.e. 
the vector $\vec{C}$ equals to zero, too. Hence, 
in this particular case, the component $\vec{\gamma}_i (y^{(i)})$ may vanish 
on a set  $\Sigma_i \subset X \setminus D$ with no open subsets, only, because 
of the  Unique Continuation Property. Therefore, 
the matrix $ W (v^{(\cdot)}_N, y^{(\cdot)}_N)$ may degenerate if 
\eqref{eq.degen.set} is fulfilled, only. 
\end{proof}

So, we may always choose pairwise distinct points $\{ y^{(i)}\}_{i=1}^{N} \subset (X \setminus 
\overline D )^N \setminus \Sigma$, 
where the  determinant $\det W (v^{(\cdot)}_N, y^{(\cdot)}_N)$ does not equal to zero.
But if for some $j$ we have $ y^{(j)}\in \partial D$ then 
we have to check that $\Phi (x,  y^{(j)})$ belongs to $S^{s,p}_A ( D)$, as we have noted above.
Then we set 
\begin{equation} \label{eq.sparse.vj}
u^\sharp (x, y^{(\cdot)}_N) = \sum_{j=1}^N \Phi (x, y^{(j)})
W(v^{(\cdot)}_N, y^{(\cdot)}_N) \big)^{-1} {\mathcal M} u ^{(0)}.
\end{equation}
Clearly, for $i$-th vector component $({\mathcal M})_i \Phi (x, y^{(j)})$ of 
${\mathcal M} \Phi (x, y^{(j)})$ we have 
$$
({\mathcal M})_i \Phi (x, y^{(j)}) = \langle V^{(i)}, \Phi (x, y^{(j)}) \rangle_{{\rm Gr}} = 
V^{(i)} (y^{(j)}) 
$$
because of Theorem \ref{t.Green}. Hence, by the very construction, 
$$
{\mathcal M} u^\sharp  (x, y^{(\cdot)}) = W(v^{(\cdot)}_N , y^{(\cdot)}_N)
\big( W(v^{(\cdot)}_N,  y^{(\cdot)}_N) \big)^{-1} 
{\mathcal M} u ^{(0)} = {\mathcal M} u ^{(0)} ,
$$
and, moreover, 
$$
\langle {\mathcal M} u^\sharp (x, y^{(\cdot)}_N) ,  \vec{h} ^\sharp \rangle_{{\mathbb R}^{kN}} = 
\langle {\mathcal M}  u ^{(0)} , \vec{h} ^\sharp \rangle _{{\mathbb R}^{kN}}= b_N 
\|u^{(0)} \|_{[W^{s,p} (D)]^k}.
$$
Therefore, $u^\sharp  (x,  y^{(\cdot)}_N)$ satisfies relations \eqref{eq.sparse.rel.1vj}, 
\eqref{eq.sparse.rel.2vj} and hence it is a minimizer to Problem \ref{pr.A.s.p.D}.
\end{proof}

It follows from the proof of Theorem \ref{t.sparse.s.p.D.fund.vj}  that sparse representations
of the form \eqref{eq.sparse.u} for minimizers  to Problem \ref{pr.A.s.p.D}
are not uniquely defined even if the minimizer is unique because we may choose different sets 
$\{ y^{(i)}\}_{i=1}^{N}$ with non-degenerate matrices $W(v^{(\cdot)}_N, y^{(\cdot)}_N)$.

\subsection{Passing from $N$ to infinity}
\label{s.N.to.infty} 

Now, let's remind that all the objects in the formulation of  Problem \ref{pr.A.s.p.D} depend on the dimension 
of the data space; in our situation these are $F=F_{kN}$, ${\mathcal F} = {\mathcal F}_{kN}$, 
${\mathcal M} = {\mathcal M}_{kN}$ 
and the minimizers $u ^{(0)} = u ^{(0)}_{kN}$.  

Let $l_q$ be the separable space of  sequences  with the (finite) norm 
$$
\|\vec h \|_{l_q} = \Big( \sum_{\nu=1}^\infty \sum_{r=1}^k  |h_{\nu,r}|^q  \Big)^{1/q}, \, 1\leq q <+\infty,
$$
and let $\mathfrak M $ be the non-separable Banach space of bounded sequences  
endowed with the norm
$$
\|\vec h \|_{\mathfrak M } = \sup_{\nu \in \mathbb N} \max_{1\leq r \leq k} |h_{\nu,r}|  ;
$$
as it is known, 
$l_{ q} \subset l_{\tilde q}$ for $\tilde q\geq  q$ and $l_q \subset {\mathfrak M}$ for all $q\geq 1$. 
 
Note that the space $S^{s,p}_A ( D)$ 
is separable as a subspace of separable Banach space $[W^{s,p} ( D)]^k$ and hence, the Banach space 
$\tilde S^{m-s,p'}_{A^*} (\hat X \setminus D) $ is separable, too, as its dual $S^{s,p}_A ( D)$ is separable, 
\cite[Theorem 4.6-8]{Krey}. Thus, we may consider that  linearly  independent set $v^{(\cdot)}_{N}$ 
is a subset of a greater system $v^{(\cdot)}_{\infty} = \{ v^{(j,r)} \}
_{j\in {\mathbb N}, \atop 1\leq r\leq k} \subset (S^{s,p}_A (D))^*$ such that its linear 
span is dense there.  Without loss of the generality we may assume that 
\begin{equation} \label{eq.bound.vj}
\| v^{(j,r)}\|_{(S^{s,p}_A (D))^*} \leq 1 \mbox{ for all the pairs } (j,r).
\end{equation}

We define the linear mapping ${\mathcal M}_\infty: S_A ^{s,p} (D) \to \mathfrak M $
via 
\begin{equation} \label{eq.M.vj.infty}
{\mathcal M}_{\infty} u= \big( \langle u , v^{(1,1)} \rangle, \dots \langle u , 
v^{(1,k)} \rangle, \dots 
\langle u , v^{(N,1)} \rangle  
\dots \langle u, v^{(N,k)} \rangle \dots  \big). 
\end{equation}
It is continuous because of \eqref{eq.bound.vj}. 

\begin{lem} \label{l.M.infty}
Under the hypothesis of Proposition \ref{p.exist.s.p.D}, let the linear 
span of the linearly independent system $v^{(\cdot)}_{\infty} \subset (S^{s,p}_A (D))^*$ is dense in 
$(S_A ^{s,p} (D))^*$. Then, given 
$\vec{h}^{(0)} \in \mathfrak M$,  the operator equation 
\begin{equation}\label{eq.M.infty}
 {\mathcal M}_{\infty} u = \vec{h}^{(0)}
\end{equation}
can not have more than one solution $u^{(0)}_\infty$ in $S_A ^{s,p} (D)$.
\end{lem}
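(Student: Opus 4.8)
The plan is to reduce the statement to the injectivity of the operator ${\mathcal M}_\infty$ and then to exploit the density hypothesis together with the Hahn--Banach theorem. Indeed, if $u^{(0)}_1$ and $u^{(0)}_2$ both solve \eqref{eq.M.infty}, their difference $u = u^{(0)}_1 - u^{(0)}_2$ lies in $S_A^{s,p}(D)$ and satisfies ${\mathcal M}_\infty u = 0$; so it suffices to show that the only element $u \in S_A^{s,p}(D)$ with ${\mathcal M}_\infty u = 0$ is $u = 0$.

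First I would observe that ${\mathcal M}_\infty u = 0$ means precisely $\langle u, v^{(j,r)}\rangle = 0$ for every pair $(j,r)$, whence, by bilinearity of the pairing between $S_A^{s,p}(D)$ and its dual, $\langle u, w\rangle = 0$ for every $w$ in the linear span ${\mathcal L}(v^{(\cdot)}_{\infty})$. Next, given an arbitrary functional $v \in (S_A^{s,p}(D))^*$ and $\varepsilon > 0$, I would use the density of ${\mathcal L}(v^{(\cdot)}_{\infty})$ in $(S_A^{s,p}(D))^*$ to pick $w \in {\mathcal L}(v^{(\cdot)}_{\infty})$ with $\|v - w\|_{(S_A^{s,p}(D))^*} < \varepsilon$; then
$$
|\langle u, v\rangle| = |\langle u, v - w\rangle| \le \|u\|_{S_A^{s,p}(D)}\, \|v - w\|_{(S_A^{s,p}(D))^*} < \varepsilon\, \|u\|_{S_A^{s,p}(D)},
$$
and letting $\varepsilon \to 0$ gives $\langle u, v\rangle = 0$ for all $v \in (S_A^{s,p}(D))^*$. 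Since $S_A^{s,p}(D)$ is a Banach space (Theorem \ref{t.Groth.sp}), its dual separates its points by the Hahn--Banach theorem, see \cite{Rud_FA}, so $u = 0$; hence ${\mathcal M}_\infty$ is injective and \eqref{eq.M.infty} has at most one solution $u^{(0)}_\infty$.

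I do not expect any genuine obstacle in this argument: the whole content is that a system in the dual whose linear span is dense is automatically total over the space. The only point deserving a word is that the brackets in \eqref{eq.M.vj.infty} are the canonical duality pairing of $S_A^{s,p}(D)$ with $(S_A^{s,p}(D))^*$ --- identified, if one wishes, with the Grothendieck pairing \eqref{eq.pairing.Gr.fin} via Theorem \ref{t.Groth.sp} --- so that the elementary estimate $|\langle u, v - w\rangle| \le \|u\|_{S_A^{s,p}(D)}\,\|v-w\|_{(S_A^{s,p}(D))^*}$ used above is legitimate. Note also that completeness or reflexivity of $S_A^{s,p}(D)$ is not needed for the uniqueness; these properties become relevant only for the existence of a solution to \eqref{eq.M.infty}, which is not asserted in this lemma.
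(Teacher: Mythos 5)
Your proof is correct and follows essentially the same route as the paper's: reduce uniqueness to the injectivity of the linear map $\mathcal{M}_\infty$, then combine the density of $\mathcal{L}(v^{(\cdot)}_\infty)$ in $(S_A^{s,p}(D))^*$ with the Hahn--Banach separation property. The paper states this in two lines; you spell out the $\varepsilon$-approximation step explicitly, but the underlying argument is identical.
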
 

\begin{proof} As the mapping is linear, we have to prove that any solution to 
\eqref{eq.M.infty} with  $\vec{h}^{(0)}=0$ is trivial. However this follows from Hahn-Banach Theorem 
because $v^{(\cdot)}_{\infty} $ is everywhere dense in $(S_A ^{s,p} (D))^*$.
\end{proof}

Next, we define $\Pi_{kN} : {\mathfrak M} \to {\mathbb R}^{kN} $ as a projection 
 to ${\mathbb R}^{kN}$ and the trivial extension $E_{kN} : {\mathbb R}^{kN} \to \mathfrak M  $: 
$$
\Pi_{kN} \vec{h} = (\vec h_1, \dots \vec h_N), \,  \vec{h}\in {\mathfrak M} , \, 
E_{kN} \vec{g} = (\vec g_1, \dots \vec g_N, 0 , \dots , 0 , \dots ), \, \vec g \in {\mathbb R}^{kN}.   
$$
Of course, the operators $\Pi_{kN} : l_{q} \to {\mathbb R}^{kN} $ and 
$E_{kN} : {\mathbb R}^{kN} \to l_{q} $, $1\leq q<\infty$, are well defined, too. 
In particular,   
\begin{equation}
\label{eq.proj.M}
\Pi _{kN} {\mathcal M}_{\infty} u = {\mathcal M}_{kN} u \mbox{ for all } u \in S_A ^{s,p} (D).
\end{equation}
Clearly, $\{ E_{kN} {\mathcal M}_{kN} u \} $ converges to $ {\mathcal M}_{\infty} u$ 
in $l_{q}$ if $ {\mathcal M}_{\infty} u \in l_{q}$, $1\leq q<+\infty$,
 but it is not usually the case for the examples considered in the next section. 
However, since $\mathfrak M$ is the strong dual for the space of summable sequences $l_1$ 
we see that $\{ E_{kN} {\mathcal M}_{kN} u \} $ converges to $ {\mathcal M}_{\infty} u$ 
*-weakly in  $\mathfrak M$ for each $u \in S_A ^{s,p} (D)$ because 
$$
\langle {\mathcal M}_{\infty} u -  E_{kN} {\mathcal M}_{kN} u , \vec h \rangle _{{\mathcal M},l_1} =  
\sum_{\nu = N+1 } \sum_{r=1}^k \langle u , v^{(\nu,r)} \rangle h_{\nu,r} , \, 
\vec{h} \in l_1.
 $$

The following statement is somehow typical in the theory of Tikhonov's regularization, except the fact that 
we are dealing with the  weak convergence instead of proving bounds for norms. 

\begin{cor} \label{c.infty}
Under the hypothesis of Proposition \ref{p.exist.s.p.D}  and Lemma \ref{l.M.infty}, let $\vec{h}^{(0)} \in 
\mathfrak M $, $1\leq q \leq +\infty$ and, 
for $N\in \mathbb N$, $\vec g \in {\mathbb R}^{kN}$,  
\begin{equation}
\label{eq.F.norm}
F_{kN} (\vec g) =  
\left\{
\begin{array}{lll} 
(1/q)
\|\vec g - \Pi_{kN} \vec h^{(0)}\|^{\tilde p}_{{\mathbb R}^{kN}_{q}}, & {\rm if } &
1\leq q < +\infty, \\
\|\vec g - \Pi_{kN} \vec h^{(0)}\|_{{\mathbb R}^{kN}_{\infty}}, & {\rm if } &
q = +\infty. \\
\end{array}
\right.
\end{equation}
Let also for the constants $b_N$ from \eqref{eq.F.s.p.D} we have 
 $\lim\limits_{N\to +\infty} b_N =0$. Then \eqref{eq.M.infty}  is solvable if and only if there is a positive constant  $C_0$ such that  
\begin{equation}
\label{eq.F.bound}
{\mathcal F}_{kN}(u^{(0)}_{kN} ) \leq C_0 
\,  b_N  \mbox{ for all } N\in \mathbb N;
\end{equation}
besides, if \eqref{eq.M.infty} admits a solution $u ^{(0)}_\infty$ then 
the sequence 
$\{ u^{(0)}_{kN} \}_{N\in \mathbb N}$ of the 
minimizers to related Problem \ref{pr.A.s.p.D} converges weakly to it in $S^{s,p}_A (D)$. 
\end{cor}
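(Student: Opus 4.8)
The plan is to treat this as a standard Tikhonov regularization argument, but carried out for weak convergence rather than norm estimates. First I would prove the "only if" direction together with the convergence statement. Assume \eqref{eq.M.infty} has a solution $u^{(0)}_\infty \in S_A^{s,p}(D)$. Then for each $N$ the element $u^{(0)}_\infty$ is a competitor in Problem \ref{pr.A.s.p.D} with data dimension $kN$, and by \eqref{eq.proj.M} we have ${\mathcal M}_{kN} u^{(0)}_\infty = \Pi_{kN}{\mathcal M}_\infty u^{(0)}_\infty = \Pi_{kN}\vec h^{(0)}$, so $F_{kN}({\mathcal M}_{kN}u^{(0)}_\infty)=0$ by the choice of $F_{kN}$ in \eqref{eq.F.norm}. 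Hence
$$
{\mathcal F}_{kN}(u^{(0)}_{kN}) \leq {\mathcal F}_{kN}(u^{(0)}_\infty) = b_N \|u^{(0)}_\infty\|_{[W^{s,p}(D)]^k},
$$
which is exactly \eqref{eq.F.bound} with $C_0 = \|u^{(0)}_\infty\|_{[W^{s,p}(D)]^k}$.

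Next, assuming \eqref{eq.F.bound} (which we have just shown is necessary, and which will also be our hypothesis in the "if" direction), I would extract the weak limit. From ${\mathcal F}_{kN}(u^{(0)}_{kN}) \leq C_0 b_N$ and the fact that $F_{kN}\geq 0$ (or at least bounded below uniformly, using that $F$ is coercive and the minimum of $F$ over the relevant balls can be normalized away — here the specific form \eqref{eq.F.norm} gives $F_{kN}\geq 0$ directly), we get $b_N\|u^{(0)}_{kN}\|_{[W^{s,p}(D)]^k} \leq {\mathcal F}_{kN}(u^{(0)}_{kN}) \leq C_0 b_N$, so $\|u^{(0)}_{kN}\|_{[W^{s,p}(D)]^k}\leq C_0$ for all $N$. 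Since $S_A^{s,p}(D)$ is a closed subspace of the reflexive Banach space $[W^{s,p}(D)]^k$ (Theorem \ref{t.Groth.sp}), it is itself reflexive, so by Banach--Alaoglu the bounded sequence $\{u^{(0)}_{kN}\}$ has a weakly convergent subsequence $u^{(0)}_{kN_\ell}\rightharpoonup \hat u \in S_A^{s,p}(D)$.

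Then I would identify the limit: for each fixed pair $(j,r)$ and all $N_\ell$ large enough that $kN_\ell \geq $ the index of $v^{(j,r)}$, the $(j,r)$-component of ${\mathcal M}_{kN_\ell}u^{(0)}_{kN_\ell}$ equals $\langle u^{(0)}_{kN_\ell}, v^{(j,r)}\rangle$, and from $F_{kN_\ell}({\mathcal M}_{kN_\ell}u^{(0)}_{kN_\ell})\leq {\mathcal F}_{kN_\ell}(u^{(0)}_{kN_\ell})/1 \leq C_0 b_{N_\ell}\to 0$ together with the explicit form of $F_{kN_\ell}$ we get $\langle u^{(0)}_{kN_\ell}, v^{(j,r)}\rangle \to h^{(0)}_{j,r}$; but weak convergence gives $\langle u^{(0)}_{kN_\ell}, v^{(j,r)}\rangle \to \langle \hat u, v^{(j,r)}\rangle$. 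Hence $\langle\hat u, v^{(j,r)}\rangle = h^{(0)}_{j,r}$ for every $(j,r)$, i.e. ${\mathcal M}_\infty \hat u = \vec h^{(0)}$, so \eqref{eq.M.infty} is solvable and $\hat u = u^{(0)}_\infty$ by the uniqueness in Lemma \ref{l.M.infty}. Since the weak limit is the same ($=u^{(0)}_\infty$) along every subsequence, a standard subsequence argument shows the whole sequence $\{u^{(0)}_{kN}\}$ converges weakly to $u^{(0)}_\infty$. This also completes the "if" direction, since we have produced a solution of \eqref{eq.M.infty}.

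The main obstacle I anticipate is the careful handling of the $q=+\infty$ case and the bookkeeping of how $F_{kN}({\mathcal M}_{kN}u^{(0)}_{kN})\to 0$ controls each individual coordinate: when $q<+\infty$ the power $\tilde p$ and the $\ell^q$-to-coordinate passage is routine, but for $q=+\infty$ one must note that $\|\vec g - \Pi_{kN}\vec h^{(0)}\|_{{\mathbb R}^{kN}_\infty}\to 0$ still dominates each coordinate difference uniformly in $N$, which is fine. A subtler point is ensuring the uniform lower bound $F_{kN}\geq 0$ (or more generally uniformly bounded below) so that the norm bound $\|u^{(0)}_{kN}\|\leq C_0$ genuinely follows from \eqref{eq.F.bound}; with the explicit nonnegative $F_{kN}$ of \eqref{eq.F.norm} this is immediate, but it is worth stating. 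Everything else is the reflexivity of $S_A^{s,p}(D)$ from Theorem \ref{t.Groth.sp}, Banach--Alaoglu, and the density/uniqueness from Lemma \ref{l.M.infty}.
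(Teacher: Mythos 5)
Your proposal is correct and follows essentially the same route as the paper: minimality against the competitor $u^{(0)}_\infty$ for the ``only if'' direction, then boundedness, reflexivity and weak subsequence extraction, identification of the weak limit coordinate-by-coordinate via $F_{kN}\to 0$, uniqueness from Lemma \ref{l.M.infty}, and the standard subsequence trick to upgrade to convergence of the whole sequence. The only cosmetic difference is that you make the passage $F_{kN}\ge 0$ and the $q=\infty$ bookkeeping explicit, which the paper leaves implicit.
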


\begin{proof}
Indeed, for this particular choice of the functional $F_{kN}$ the assumptions 
of Theorem \ref{t.sparse.s.p.D.fund.vj} are fulfilled. 

If there is a vector $u^{(0)}_\infty \in S_A ^{s,p} (D)$ satisfying \eqref{eq.M.infty} then, 
by the minimality of $ u^{(0)}_{kN}$ and \eqref{eq.proj.M}, for all $N \in \mathbb N$ we have  
\begin{equation}\label{eq.est.N}
{\mathcal F}_{kN}(u^{(0)}_{kN} ) \leq {\mathcal F}_{kN}(u^{(0)}_\infty ) = 
\end{equation}
$$
b_N \, \|u^{(0)}_\infty \|_{[W^{s,p} (D)]^k} + F_{kN}(u^{(0)}_\infty)  = 
b_N \, \|u^{(0)}_\infty\|_{[W^{s,p} (D)]^k},
$$
i.e. $C_0 = \|u^{(0)}_\infty\|_{[W^{s,p} (D)]^k}$.

Back, let \eqref{eq.F.bound} hold true. Then, in  particular, 
the sequence $\{  u^{(0)}_{kN} \}_{N\in \mathbb N}$ is bounded in $S_A ^{s,p} (D)$. 
As the space $S_A ^{s,p} (D)$ is reflexive, we may extract a weakly convergent to 
a vector $w^{(0)} \in S_A ^{s,p} (D)$ subsequence $\{  u^{(0)}_{kN_l} \}_{N_l\in \mathbb N}$. 
Since the subsequence is bounded  it converges 
weakly to $w^{(0)}$ if and only if 
$$
\langle f , w^{(0)} - u^{(0)}_{kN_l} \rangle \to 0 \mbox{ as } N_l\to +\infty
$$
for any $f$ from a set $L \subset (S^{s,p}_A (D))^*$ such that ${\mathcal L} (L)$ is everywhere dense 
in $(S^{s,p}_A (D))^*$. Taking $L = v^{(\cdot)}_{\infty} $, for all pairs $(j,r)$ we obtain:
\begin{equation}\label{eq.weak.lim1}
\lim_{N_l \to +\infty }\langle w^{(0)} -  u^{(0)}_{kN_l}, v^{(j,r)} \rangle = 
\lim_{N_l \to +\infty } (({\mathcal M}_\infty w^{(0)})_{j,r} - ({\mathcal M}_{kN_l} u^{(0)}_{kN_l} )_{j,r}) =0 .
\end{equation}
On the other hand, \eqref{eq.F.bound} yields for all $ N_l$, $l \in \mathbb N$,
$$
C_0 b_N \geq 
\left\{
\begin{array}{lll}
\max_{1\leq \nu \leq N_l} \max_{1\leq r \leq k}
| h^{(0)}_{j,r} - ({\mathcal M}_{kN_l}u^{(0)}_{kN_l} )_{j,r}|  
& {\rm if } & q=+\infty,\\
(1/q)\sum_{j=1}^N \sum_{r=1}^k  
|\vec h^{(0)}_{j,r} - ({\mathcal M}_{kN_l}u^{(0)}_{kN_l} )_{j,r}|^{q}
& {\rm if } & 1\leq q<+\infty.
\end{array}
\right.
$$
Since $\lim\limits_{N\to +\infty} b_N =0$, passing to the limit with respect to $N_l\to + \infty$, we obtain
\begin{equation}\label{eq.weak.lim2}
\lim_{N_l \to +\infty } ( \vec h^{(0)}_{j,r} - ({\mathcal M}_{kN} u^{(0)}_{kN_l} )_{j,r}) =0 .
\end{equation}
Combining \eqref{eq.weak.lim1} and \eqref{eq.weak.lim2} we conclude that 
${\mathcal M}_\infty w^{(0)} = \vec{h}^{(0)}$, i.e. $w^{(0)} $ is the unique solution to 
\eqref{eq.M.infty} because of Lemma  \ref{l.M.infty}. In particular, the limit of any other 
weakly convergent subsequence of $\{ u^{(0)}_{kN}\} $  coincides with $w^{(0)}$. 
Besides, according to Weak Compactness Principle for reflexive spaces there are no subsequences 
of $\{ u^{(0)}_{kN}\} $ without weakly convergent subsequences and hence the 
sequence $\{ u^{(0)}_{kN}\} $ itself converges weakly to $w^{(0)}$.
\end{proof}

Note that in the case, where $1<q<+\infty$, the minimizer $u_{kN}$ is unique for each $N 
\in \mathbb N$ (see Remark \ref{r.unique}). 

\begin{rem} \label{r.convergence}
Please, don't be misled about the weak convergence of the sequence $\{ u^{(0)}_{kN}\} $. According to Stiltjes-Vitali 
Theorem, any weakly convergent sequence $\{ w_{\nu}\} $ in the space $S_A ^{s,p} (D)$ actually 
converges uniformly on compact subsets in $D$ together with its partial derivatives of any order.
\end{rem}

\section{Some applications}
\label{s.appl}

\subsection{Variational inverse problems related to neural networks}
\label{s.var}
Actually, we refer to %\cite{Bch}, 
\cite{Bart_et_all}, %\cite{CS}, 
\cite{CS2001}, \cite{SS}, \cite{Un23A}, and others, for a tutorial 
of application of variational problems to Machine Learning.

One of the possibilities is the following. Let ${\mathcal X}  \times  {\mathbb R}$ 
be a probability space with a distribution $\mu$ and let $u : {\mathcal X} \to {\mathbb R}$
be a function that needs to be 'restored' (estimated) via its samples $\{ u(x^{(j)}) \}_{j=1}^N$ related 
to a training set $\{ x^{(j)} , z^{(j)} \}_{j=1}^N \subset {\mathcal X}  \times  {\mathbb R}$.
More precisely, given a function space $\mathcal T$ and a loss function $L: {\mathbb R}\times  {\mathbb R} \to 
(0,+\infty)$ we are looking for a function $u \in \mathcal T$, minimizing the functional  
\begin{equation*} 
 {\mathcal L} (u) =\int _{{\mathcal X}  \times  {\mathbb R} } L (z, u(x)) \, d\mu (x,z) .
\end{equation*}
However, as the measure $\mu$ is practically known on the training set, only, one passes usually to 
a discreet minimization, i.e. we are looking for a function $f \in \mathcal B$, minimizing 
the functional 
\begin{equation} \label{eq.discrete} 
\hat {\mathcal L} (u) =\sum_{j=1}^N \hat L (z^{(j)}, u(x^{(j)}))  
\end{equation}
with a possibly different function space ${\mathcal B}$ and a discretization $\hat L$ of the 
integrand  $L (z, u(x))  d\mu (x,z)$. Since minimizers of \eqref{eq.discrete} 
might be unstable, a regularizing term ${\mathcal J}(u)$ is usually added to \eqref{eq.discrete} 
for minimization in $\mathcal B$: 
\begin{equation*} \label{eq.discrete.reg} 
 \sum_{j=1}^N \hat L (z^{(j)}, u(x^{(j)}))  + {\mathcal J}(u) .
\end{equation*}
 
In our particular situation, we have ${\mathcal B} = S_A^{s,p}(D)$, 
${\mathcal J}(u)=b_N\|u\|_{[W^{s,p} (D)]^k}$ and the minimized 
functional is given by \eqref{eq.F.s.p.D}. In order to specify the linear mapping  related to data, 
 we define  ${\mathcal M} = {\mathcal M}_{kN}: S^{s,p}_A ( D) \to {\mathbb R}^{kN}$ as follows:
\begin{equation} \label{eq.M.points}
{\mathcal M} u= (  u (x^{(1)}), \dots u (x^{(N)})), \, u \in S^{s,p}_A ( D),
\end{equation}
with a set $x^{(\cdot)}_N$ of some pairwise distinct points $\{ x^{(j)} \}_{j=1}^N \subset D$.

\begin{cor} \label{c.sparse.s.p.D.fund} 
Let ${\mathcal M}$ be given by \eqref{eq.M.points}. 
Under the hypothesis of Proposition \ref{p.exist.s.p.D} and Theorem \ref{t.sparse.s.p.D.fund.vj}, 
given pair-wise distinct points $\{ x^{(i)}\}_{i=1}^N \subset \overline D$, 
there is a sparse minimizer $u^{\sharp} = u^{\sharp}_{kN}$ to Problem \ref{pr.A.s.p.D} in the form  \eqref{eq.sparse.u}  
with  some points $ y^{(1)},  \dots y^{(N)} $ from $X \setminus D$ and vectors 
$ \vec {C} ^{(1)}, \dots \vec {C} ^{(N)} $ from ${\mathbb R}^k$. 
\end{cor}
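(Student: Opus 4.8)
The plan is to recognise the evaluation mapping \eqref{eq.M.points} as a particular instance of the abstract mapping \eqref{eq.M.vj} and then apply Theorem \ref{t.sparse.s.p.D.fund.vj}. First one checks that \eqref{eq.M.points} really has the required form: by Proposition \ref{p.Montel.1} the space $S^{s,p}_A(D)$ possesses a reproducing kernel, hence for every $x^{(j)}\in D$ and $1\le r\le k$ the functional $v^{(j,r)}\colon u\mapsto u_r(x^{(j)})$ belongs to $(S^{s,p}_A(D))^*$, so $\mathcal M$ is continuous and of the shape \eqref{eq.M.vj}. Moreover, by Corollary \ref{c.repr.ker} (formula \eqref{eq.repr.ker}) the Grothendieck duality of Theorem \ref{t.Groth.sp} identifies each $v^{(j,r)}$ with a column of the translated fundamental solution $\Phi^*(x^{(j)},\cdot)$, a solution of $A^*$ in a neighbourhood of $X\setminus D$ that is regular at infinity and whose only singularity inside $X$ is the locally integrable one at $x^{(j)}$ dictated by \eqref{eq.bilateral}. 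If some $x^{(j)}$ lies on $\partial D$ one must additionally require, as in the discussion preceding Theorem \ref{t.sparse.s.p.D.fund.vj}, that this column lie in the relevant Sobolev space near $\partial D$; for $x^{(j)}\in D$ this is automatic, and I would present that case.

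The substantive step --- and the one I expect to be the main obstacle --- is to show that for pairwise distinct $x^{(1)},\dots,x^{(N)}$ the $kN$ functionals $\{v^{(j,r)}\}$ are linearly independent, equivalently that $\mathcal M$ maps $S^{s,p}_A(D)$ onto $\mathbb R^{kN}$ as Problem \ref{pr.A.s.p.D} requires. Here I would argue via duality and unique continuation. Suppose $\sum_{j=1}^N\sum_{r=1}^k c_{j,r}v^{(j,r)}=0$ in $(S^{s,p}_A(D))^*$; transporting this identity along the isomorphism of Theorem \ref{t.Groth.sp}, the corresponding combination $\vec G$ of columns of $\Phi^*(x^{(j)},\cdot)$ vanishes, as an element of $\tilde S^{m-s,p'}_{A^*}(\hat X\setminus\overline D)$, i.e. as a function on $X\setminus\overline D$. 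But $\vec G$ is a locally integrable function on all of $X$ satisfying $A^*\vec G=0$ on $X\setminus\{x^{(1)},\dots,x^{(N)}\}$, a connected open set since $n\ge 2$; as $\vec G$ vanishes on the nonempty open subset $X\setminus\overline D$, the Unique Continuation Property of $A^*$ forces $\vec G\equiv 0$ there, hence $\vec G=0$ almost everywhere on $X$. Applying $A^*$ and using \eqref{eq.bilateral} yields $0=A^*\vec G=\sum_{j=1}^N \vec c^{(j)}\,\delta_{x^{(j)}}$ in $\mathcal D'(X)$, where $\vec c^{(j)}=(c_{j,1},\dots,c_{j,k})^T$; since the points are pairwise distinct, all $\vec c^{(j)}=0$, i.e. all $c_{j,r}=0$. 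Running this computation in reverse also gives the surjectivity of $\mathcal M$, which is implicitly needed for Problem \ref{pr.A.s.p.D} to make sense.

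It then remains to assemble the pieces: the geometric and regularity assumptions on $D$ and $X\setminus D$, the UCP for both $A$ and $A^*$, the connectedness of $X\setminus\overline D$ that Lemma \ref{l.non.degen.vj} uses, and the existence of $\hat u\in S^{s,p}_A(D)$ with $F$ continuous at $\mathcal M\hat u$ are all inherited from the hypotheses of Proposition \ref{p.exist.s.p.D} and Theorem \ref{t.sparse.s.p.D.fund.vj}. Hence Theorem \ref{t.sparse.s.p.D.fund.vj} applies to the mapping \eqref{eq.M.points}, producing a sparse minimizer $u^{\sharp}=u^{\sharp}_{kN}$ of Problem \ref{pr.A.s.p.D} in the form \eqref{eq.sparse.u}, with points $y^{(1)},\dots,y^{(N)}\in X\setminus D$ and vectors $\vec C^{(1)},\dots,\vec C^{(N)}\in\mathbb R^k$, as claimed.
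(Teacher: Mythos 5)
Your proposal is correct and follows essentially the same route as the paper's proof: identify the pointwise evaluations as a Grothendieck-dual family of columns of the fundamental solution, prove their linear independence via unique continuation for $A^*$, and invoke Theorem \ref{t.sparse.s.p.D.fund.vj}. The only (cosmetic) difference from the paper's Lemma \ref{l.lin.indep} is that you derive linear independence by applying $A^*$ to the vanishing combination $\vec G$ and reading off $\sum_j \vec c^{(j)}\delta_{x^{(j)}}=0$, whereas the paper pairs $\vec G$ against test functions localized near each $x^{(i_0)}$ --- these are equivalent formulations of the same distributional fact; your remark that evaluation at boundary points $x^{(j)}\in\partial D$ needs extra justification is a fair observation, which the paper glosses over.
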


\begin{proof} Choose the set of functionals $v^{(\cdot)}_N \subset S_A^{s,p} (D)$ consisting 
of $\delta$-functionals related to pair-wise distinct points $\{ x^{(i)}\}_{i=1}^N \subset \overline D$, 
and components of vectors from $S_A^{s,p} (D)$, i.e. $\langle v^{(j,r)} , 
u\rangle = \delta_{x^{(i)}} u_r =  u_r (x^{(i)})$,  
where $r$ is the number of the component. Then, using Green formula \eqref{eq.Green.M.B.2}
we identify $v^{(j,r)}$ with the  
 $r$-th columns $v^{(j,r)} (y) = \Phi_r (x^{(j)},y)$ of the matrix $\Phi (x^{(j)},y)$. 

\begin{lem} \label{l.lin.indep}
Let  ${\mathcal M}$ be given by \eqref{eq.M.points}. 
Given pair-wise distinct points $\{ x^{(i)}\}_{i=1}^N \subset \overline D$, the  
 columns $v^{(j,r)} (y) = \Phi_r (x^{(j)},y)$ of the  matrices  
\begin{equation} \label{eq.matrices}
 V^{(1)} (y)= \Phi (x^{(1)},y) , \dots   , V^{(N)} (y)=  \Phi (x^{(N)},y)  
\end{equation} 
form a  linearly independent 
system of vectors in $S_{A^*}(U)$ for each domain $U \subset X \setminus \{ x^{(j)}\}_{j=1}^N$.
\end{lem}

\begin{proof}
Indeed, set   
$$
v (y) = 
\sum_{i=1}^{N} \Phi(x^{(i)},y) \, \vec {C} _i
$$
with some vectors $\{\vec {C} _1 , \dots \vec {C} _N\} \subset {\mathbb R}^k$. By the properties of 
the bilateral fundamental solution, $v \in S_{A^*}(\hat X \setminus \{ x^{(i)}\}_{i=1}^N)$. According 
to the Unique Continuation Property, if $v \equiv 0$ in $U$, 
then it vanishes on $X \setminus \{ x^{(i)}\}_{i=1}^N$. Now, taking 
a test function $\varphi \in [C^\infty _0 (X)]^k$ supported in a neighbourhood $V(x^{(i_0)})$
of a fixed point $x^{(i_0)}$, satisfying $\{ x^{(i)}\}_{i\ne i_0} \not \in V(x^{(i_0)})$, we see 
that
$$
0 = \int_X v^* (y)  (A  \varphi) (y) dy  =  (\vec C_{i_0})^* \varphi  (x^{(i_0)}) 
$$
because the singularity of the kernel 
$\Phi (x,y)$ on the diagonal $\{x=y\}$ is the same as $|x-y| ^{m-n}$. 
As $i_0$ and $\varphi$ can be chosen arbitrarily, we see that $\vec C_{i}=0$ for all $1\leq i \leq N$.
\end{proof}

Thus, if we set $v^{(\cdot)}_N = \{ v^{(i,r)} (y) = \Phi_r (x^{(j)},y)\}_{1\leq j \leq N, \atop 
1\leq r\leq k}$, then  
it follows from Lemmata \ref{l.non.degen.vj} and \ref{l.lin.indep} that,  
given pair-wise distinct points $\{ x^{(j)}\}_{j=1}^{N} \subset \overline D$ 
 the  determinant $\det W_N (y^{(1)}, \dots , y^{(N)})$ of the matrix 
\begin{equation}\label{eq.W.points}
W_N(v^{(\cdot)}, y^{(\cdot)}) = 
\left( 
\begin{array}{ccccc}
 \Phi (x^{(1)},y^{(1)}) &  \dots &  \Phi (x^{(N)},y^{(1)})  \\
\dots & \dots &  \dots\\
 \Phi (x^{(1)},y^{(N-1)}) &  \dots &   \Phi (x^{(N)},y^{(N-1)})  \\
 \Phi (x^{(1)},y^{(N)}) &  \dots &  \Phi (x^{(N)},y^{(N)})  \\
\end{array}
\right)
\end{equation}
may vanish only if \eqref{eq.degen.set} is fulfilled with the sets $\Sigma_i$, $1\leq i \leq N$,  
that have no open subsets in $X \setminus \overline D$. 

Finally, it follows from Theorem \ref{t.sparse.s.p.D.fund.vj} 
that in this particular situation  
a sparse minimizer $u^\sharp_{kN}$ is given by \eqref{eq.sparse.vj} with 
the matrix $W_N$ as in \eqref{eq.W.points}. 
\end{proof}

In this case equation \eqref{eq.M.infty} is equivalent to the following: given 
the set of pairwise distinct points $\{ x^{(i)}\}_{i=1}^{\infty} \subset \overline D$, and a vector 
$\vec{h}^{(0)} \in {\mathfrak M}$, $\vec{h}^{(0)}= (\vec{h}_1, \dots \vec{h}_N, \dots)$, $\vec{h}_i \in 
{\mathbb R}^k$, find   $u \in S_A ^{s,p} (D)$ such that 
\begin{equation}
\label{eq.M.infty.Machine}
u (x^{(i)}) = \vec{h}_i \mbox{ for all } i\in \mathbb N.
\end{equation}

\begin{cor} \label{c.sparse.s.p.D.infty} 
Let ${\mathcal M} $ be given by \eqref{eq.M.points} and the functional $F$ be given by 
\eqref{eq.F.norm}. 
Under the hypothesis of Proposition \ref{p.exist.s.p.D}, 
let the pairwise distinct points $\{ x^{(i)}\}_{i=1}^{\infty} \subset \overline D$ be dense 
in an open subset $U$ of $D$. Then 1)  equation \eqref{eq.M.infty.Machine} 
can not have more than one solution $u^{(0)}_\infty$ in $S_A ^{s,p} (D)$; 2) if equation \eqref{eq.M.infty.Machine}
has a solution $u^{(0)}_\infty$ in $S_A ^{s,p} (D)$ and  $\lim\limits_{N\to +\infty} b_N =0$ then the sequence of minimizer $\{u^{(0)}_{kN}\}$ 
to Problem \ref{pr.A.s.p.D} converges weakly to it in $S_A ^{s,p} (D)$. 
\end{cor}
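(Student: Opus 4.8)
The plan is to deduce both assertions from Lemma~\ref{l.M.infty} and Corollary~\ref{c.infty}, applied to the concrete system of functionals used in Corollary~\ref{c.sparse.s.p.D.fund}. Put $v^{(\cdot)}_\infty = \{ v^{(i,r)}\}_{i\in\mathbb N,\,1\le r\le k}$, where $\langle u, v^{(i,r)}\rangle = u_r(x^{(i)})$ is the $r$-th component evaluation at $x^{(i)}$ (normalised, if necessary, so that \eqref{eq.bound.vj} holds); as in the proof of Corollary~\ref{c.sparse.s.p.D.fund}, each $v^{(i,r)}$ is identified through Green's formula \eqref{eq.Green.M.B.2} with (a multiple of) the $r$-th column of $\Phi(x^{(i)},\cdot)$, so that $v^{(\cdot)}_\infty \subset \tilde S^{m-s,p'}_{A^*}(\hat X\setminus D)$. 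The one thing that has to be verified in order to invoke Lemma~\ref{l.M.infty} and Corollary~\ref{c.infty} is that $v^{(\cdot)}_\infty$ is a linearly independent set whose linear hull ${\mathcal L}(v^{(\cdot)}_\infty)$ is dense in $(S^{s,p}_A(D))^*$, which amounts to the injectivity of the operator ${\mathcal M}_\infty$ of \eqref{eq.M.vj.infty}.

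For linear independence, Lemma~\ref{l.lin.indep} already gives it for every finite subsystem of $v^{(\cdot)}_\infty$, hence for the whole countable family. For the density (equivalently, the injectivity of ${\mathcal M}_\infty$), by the Hahn--Banach theorem it suffices to check that if $u \in S^{s,p}_A(D)$ satisfies $u_r(x^{(i)})=0$ for all $i\in\mathbb N$ and all $1\le r\le k$, then $u\equiv 0$. Since $S^{s,p}_A(D)\subset S_A(D)\subset[C^\infty(D)]^k$ by elliptic regularity and the points $\{x^{(i)}\}$ are dense in the open set $U\subset D$, such a $u$ vanishes on $U$, and then the Unique Continuation Property of $A$ forces $u\equiv 0$ in $D$. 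Hence the hypotheses of Lemma~\ref{l.M.infty} are fulfilled, and assertion 1) is immediate, since solving \eqref{eq.M.infty.Machine} is the same as solving ${\mathcal M}_\infty u = \vec{h}^{(0)}$ with $\vec{h}^{(0)} = (\vec{h}_1,\vec{h}_2,\dots)$. For assertion 2), $F=F_{kN}$ is exactly of the form \eqref{eq.F.norm}, the minimizers $u^{(0)}_{kN}$ exist by Proposition~\ref{p.exist.s.p.D}, and $\lim_{N\to+\infty}b_N=0$ by hypothesis; thus all assumptions of Corollary~\ref{c.infty} hold, and it gives at once that, whenever \eqref{eq.M.infty.Machine} has a solution $u^{(0)}_\infty\in S^{s,p}_A(D)$, the sequence $\{u^{(0)}_{kN}\}$ of minimizers of Problem~\ref{pr.A.s.p.D} converges weakly to $u^{(0)}_\infty$ in $S^{s,p}_A(D)$.

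I expect the only non-routine step to be the density of ${\mathcal L}(v^{(\cdot)}_\infty)$ in the dual $(S^{s,p}_A(D))^*$, i.e. the injectivity of ${\mathcal M}_\infty$; this is where the density of $\{x^{(i)}\}$ in an open subset of $D$ together with the Unique Continuation Property are genuinely used, and it is the step I would spell out in full. With it established, assertion 1) is a direct reference to Lemma~\ref{l.M.infty} and assertion 2) a direct reference to Corollary~\ref{c.infty}, with no further computation required.
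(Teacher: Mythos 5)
Your proposal is correct and essentially coincides with the paper's argument: identify the evaluation functionals with columns of $\Phi(x^{(i)},\cdot)$ through Green's formula, show that their linear hull is total in $(S^{s,p}_A(D))^*$ via reflexivity, the density of $\{x^{(i)}\}$ in $U$, the smoothness of elements of $S^{s,p}_A(D)$, and UCP for $A$, and then invoke Corollary~\ref{c.infty}. The only cosmetic difference is that the paper proves assertion 1) directly (two solutions agree on $U$ by density and continuity, hence everywhere by UCP) rather than routing it through Lemma~\ref{l.M.infty}, but that is the same underlying argument packaged differently; you also spell out the linear independence check via Lemma~\ref{l.lin.indep} and the normalisation \eqref{eq.bound.vj}, which the paper leaves implicit.
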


\begin{proof} Indeed, if the points $\{ x^{(i)}\}_{i=1}^{\infty} \subset \overline D$ are dense 
in an open subset $U$ of $D$ then any two solutions to \eqref{eq.M.infty} coincide on $U$. 
Then the UCP property for $A$ yields they coincide in $D$.  

According to Corollary \ref{c.infty}, 
in order to prove the second statement  we have to show that 
 the  linear envelope of columns $\{ v^{(j,r)} (y) = \Phi_r (x^{(j)},y)\} $ of the  matrices  
$\{  V^{(j)} (y)= \Phi (x^{(j)},y) \}_{j\in \mathbb N}$ is everywhere dense in $(S^{s,p}_A (D))^*$.
But, as the space $S^{s,p}_A (D)$ is reflexive, this is equivalent to the fact that an 
element $u \in  S^{s,p}_A (D)$, vanishing on $\{ v^{(j,r)} (y) \}$, is identically zero.
On the other hand, if 
$$
 0 = \langle u , V^{(j)} (y)\rangle _{{\rm Gr}} = u(x^{(j)}) \mbox{ for all } j \in \mathbb N,
$$
then $u (x) = 0$ for all $x\in U$ and, by UCP for $A$ we see that $u\equiv 0$ in $D$.
\end{proof}

\begin{rem}
Please, note that under hypothesis of Corollary \ref{c.sparse.s.p.D.infty}, equation 
\eqref{eq.M.infty.Machine} corresponds to the ill-posed problem of 'analytic contunuation': given  
$U\subset D$ and an element $\tilde u \in  S^{s,p}_A (U)$ find $u \in  S^{s,p}_A (D)$ such that 
$u = \tilde u$ on $U$.
\end{rem}

\subsection{The  Dirichlet Problem} 
\label{s.Dir} 
In this section we obtain a sparse minimizer for the well-posed Dirichlet Problem for strongly 
elliptic operators. Namely, one have to consider a (possibly, overdetermined) elliptic 
$(\tilde k \times k)$-matrix operator $\tilde A$ of order $\tilde m$ and 
strongly $(k \times k)$-matrix elliptic operator $A = \tilde A ^* \tilde A$ of order $m= 2\tilde m$.

\begin{prob} \label{pr.Dir}
Given datum $\oplus_{l=0}^{m/2-1} u_j \in \oplus_{l=0}^{m/2-1} [B^{s-l-1/p,p} (\partial D)]^k$
find a vector function $u \in [W^{s,p} (D)]^k$ such that 
\begin{equation*}
\left\{ 
\begin{array}{lcc}
\tilde A^* \tilde Au=0 & in & D, \\
\oplus_{l=0}^{m/2-1} B_j u  = \oplus_{l=0}^{\tilde m-1} u_j  & on & \partial D.
\end{array}
\right.
 \end{equation*}
\end{prob}
Again, if $s\geq m/2$ then $B_j u$ are traces on $\partial D$ and 
for $s<m$ we may treat $B_j u$ as weak boundary values on $\partial D$. 
As it is known, this problem is well-posed under the assumptions above, \cite[\S 2.4]{ShTaBDRK} (the UCP is very 
important here!).

In order to define a 'regularization' of Problem \ref{pr.Dir} with the use of Problem \ref{p.exist.s.p.D} and 
Theorem \ref{t.sparse.s.p.D.fund.vj}, 
we  pick a system of linearly independent elements 
\begin{equation} \label{eq.vj.weak.Dir}
\{ \oplus_{l=0}^{m/2-1} w^{(i)}_l \} _{i=1}^{N}  
 \subset \oplus_{l=0}^{m/2-1}[B^{l-s+1/p,p'} (\partial D)]^k. 
\end{equation}
Then we define the mapping ${\mathcal M} = {\mathcal M}_{mkN/2}: S^{s,p}_A ( D) \to {\mathbb R}^{mkN/2}$ similarly to 
\eqref{eq.M.vj}:  
\begin{equation} \label{eq.M.vj.weak.Dir}
{\mathcal M} u= ( \langle\oplus_{l=0}^{m/2-1}\langle B_l u , w^{(1)}_l \rangle_{\partial D}, \dots 
\oplus_{l=0}^{m/2-1}\langle B_l u , w^{(N)}_l \rangle _{\partial D}), \, 
u \in S^{s,p}_A ( D).
\end{equation}
 
\begin{cor} \label{c.sparse.s.p.D.fund.vj.weak.Dir} 
Let $w^{(\cdot)}$ be a linearly independent system \eqref{eq.vj.weak.Dir} and let 
${\mathcal M}$ be given by \eqref{eq.M.vj.weak.Dir}.
Then, under the hypothesis of Proposition \ref{p.exist.s.p.D}, 
there is a sparse minimizer $u^{\sharp}_{kN}$ to Problem 
\ref{pr.A.s.p.D} in the form  \eqref{eq.sparse.u}  
with  some points $\{ y^{(j,l)} \}_{1\leq j \leq N \atop 
0 \leq l \leq m/2-1} $ from $X \setminus D$ and $\{ \vec{C}^{(j,l)} \}_{1\leq j \leq N \atop 
0 \leq l \leq m/2-1} $  from ${\mathbb R}^k$. 
\end{cor}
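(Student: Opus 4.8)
The plan is to recognize the mapping $\mathcal M$ of \eqref{eq.M.vj.weak.Dir} as a mapping of the form \eqref{eq.M.vj}, so that Theorem \ref{t.sparse.s.p.D.fund.vj} applies directly. First I would check that each functional $u\mapsto\langle B_l u,w^{(i)}_l\rangle_{\partial D}$ (with $1\le i\le N$, $0\le l\le m/2-1$) is a bounded linear functional on $S_A^{s,p}(D)$: this is immediate from the continuity of the boundary operators \eqref{eq.trace.B_j} (if $s\ge l+1$) and \eqref{eq.traces.int} (if $s\le l$) combined with the boundedness of the Besov duality pairing between $[B^{s-l-1/p,p}(\partial D)]^k$ and $[B^{l-s+1/p,p'}(\partial D)]^k$. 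Collecting these functionals (relabelling and splitting into scalar entries as in \eqref{eq.M.vj}) writes $\mathcal M$ in the shape of \eqref{eq.M.vj} with $mN/2$ groups of $k$ data, i.e.\ with $\mathbb R^{mkN/2}$ as target, and it only remains to verify the two hypotheses of Theorem \ref{t.sparse.s.p.D.fund.vj}.

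The continuity hypothesis on $F$ is harmless: for the regularisation of Problem \ref{pr.Dir} one takes $F$ of the norm type \eqref{eq.F.norm}, which is finite and continuous on all of ${\mathbb R}^{mkN/2}$, so that $F$ is continuous at $\mathcal M\hat u$ for every $\hat u\in S_A^{s,p}(D)$. The substantive point is the linear independence of the associated system in $\tilde S^{m-s,p'}_{A^*}(\hat X\setminus\overline D)$. By Theorem \ref{t.Groth.sp} each functional $u\mapsto\langle B_l u,w^{(i)}_l\rangle_{\partial D}$ is represented through the Grothendieck pairing \eqref{eq.pairing.Gr.fin} by a unique element of $\tilde S^{m-s,p'}_{A^*}(\hat X\setminus\overline D)$, and a vanishing linear combination of these elements amounts to a functional $u\mapsto\sum_{i,l}c_{i,l}\langle B_l u,w^{(i)}_l\rangle_{\partial D}$ that is identically zero on $S_A^{s,p}(D)$. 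Since the Dirichlet problem for the strongly elliptic operator $A=\tilde A^*\tilde A$ is well posed in $[W^{s,p}(D)]^k$, see \cite[\S 2.4]{ShTaBDRK}, the Dirichlet data $\oplus_{l=0}^{m/2-1}B_l u$ exhaust $\oplus_{l=0}^{m/2-1}[B^{s-l-1/p,p}(\partial D)]^k$ as $u$ ranges over $S_A^{s,p}(D)$; hence the boundary element $\oplus_l\bigl(\sum_i c_{i,l}w^{(i)}_l\bigr)$ annihilates the whole of $\oplus_l[B^{s-l-1/p,p}(\partial D)]^k$, so it vanishes, and the linear independence of the system \eqref{eq.vj.weak.Dir} forces all $c_{i,l}=0$.

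With both hypotheses in hand, Theorem \ref{t.sparse.s.p.D.fund.vj}, together with Lemma \ref{l.non.degen.vj}, delivers a sparse minimizer $u^{\sharp}$ to Problem \ref{pr.A.s.p.D} in the form \eqref{eq.sparse.u}: one chooses $mN/2$ points of $X\setminus\overline D$, here labelled $\{y^{(j,l)}\}_{1\le j\le N,\,0\le l\le m/2-1}$, off the exceptional set of \eqref{eq.degen.set} (using, as in Lemma \ref{l.non.degen.vj}, that $X\setminus\overline D$ is connected), so that the Green-integral matrix $W(v^{(\cdot)},y^{(\cdot)})$ is invertible, and then reads off the coefficient vectors $\{\vec C^{(j,l)}\}\subset{\mathbb R}^k$ from formula \eqref{eq.sparse.vj}; since every $y^{(j,l)}$ lies in $X\setminus\overline D$, the columns $\Phi(\cdot,y^{(j,l)})$ belong to $S_A(\overline D)\subset S_A^{s,p}(D)$, so $u^{\sharp}$ is admissible. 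I expect the linear independence step above to be the main obstacle: translating the vanishing of the combined functional on $S_A^{s,p}(D)$ into a statement purely about the boundary elements $w^{(i)}_l$ is exactly where the well-posedness of the Dirichlet problem --- and, through it, the UCP --- enters, whereas the remainder is a routine application of the constructions of Section \ref{s.sparse.min}.
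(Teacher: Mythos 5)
Your overall outline is sound, and you correctly identify that the corollary should reduce to Theorem~\ref{t.sparse.s.p.D.fund.vj} once the constituent functionals are identified with elements of $\tilde S^{m-s,p'}_{A^*}(\hat X\setminus \overline D)$ and shown linearly independent there. The route you take for the linear independence step, however, genuinely diverges from the paper's. The paper exploits the \emph{uniqueness} part of the Dirichlet problem: it applies the jump theorem for $\tilde{\mathcal G}$ together with Lemma~\ref{l.chract.traces.A} (for $A^*$) to produce a solution $w\in S^{m-s,p'}_{A^*}(D)$ of a homogeneous Dirichlet problem, concludes $w\equiv 0$ by the uniqueness theorem of \cite[\S 2.4]{ShTaBDRK}, and only then reads off $\sum_i w^{(i)}_\nu \vec C_i = 0$ on $\partial D$. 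You instead invoke the \emph{existence} (surjectivity) part: the Dirichlet trace $u\mapsto\oplus_l B_l u$ is onto $\oplus_l [B^{s-l-1/p,p}(\partial D)]^k$, so the annihilating boundary datum must vanish. This is a cleaner route in that it bypasses Lemma~\ref{l.chract.traces.A} and the weak-jump theorem for $\tilde{\mathcal G}$ altogether; both approaches ultimately rest on well-posedness of Problem~\ref{pr.Dir} but lean on opposite halves of it.

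There is, however, a genuine gap in your final inference. You test independence with scalar coefficients $c_{i,l}$ depending on both $i$ \emph{and} $l$, obtain $\sum_i c_{i,l}\, w^{(i)}_l = 0$ on $\partial D$ separately for each $l$, and then claim that the linear independence of the jets $\{\oplus_{l=0}^{m/2-1} w^{(i)}_l\}_{i=1}^N$ assumed in \eqref{eq.vj.weak.Dir} kills all the $c_{i,l}$. It does not: linear independence of the $m/2$-tuples only rules out a nontrivial relation $\sum_i a_i\,\oplus_l w^{(i)}_l = 0$ with $a_i$ \emph{independent of $l$}. For coefficients allowed to vary with $l$, what you actually need is slice-wise independence of $\{w^{(i)}_l\}_{i=1}^N$ for each fixed $l$, which is strictly stronger than \eqref{eq.vj.weak.Dir} (e.g.\ take $w^{(1)}=(f,g)$, $w^{(2)}=(f,-g)$: the jets are independent but $w^{(1)}_0-w^{(2)}_0=0$). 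The paper's proof avoids this by building the linear combination with a single vector coefficient $\vec C_i\in\mathbb R^k$ per index $i$, not per pair $(i,l)$, and correspondingly it is the jet-wise independence that it invokes at the end. To repair your argument, either restrict to $l$-independent coefficients $c_i$ (which, after the surjectivity step, reduces precisely to the paper's conclusion $\sum_i c_i\, w^{(i)}_l = 0$ for each $l$ and hence $\sum_i c_i\, \oplus_l w^{(i)}_l=0$), or strengthen the hypothesis on $w^{(\cdot)}$ to slice-wise independence if the $(i,l)$-indexed system is really what is meant to feed into Theorem~\ref{t.sparse.s.p.D.fund.vj}.
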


\begin{proof} Set $w^{(i)}_l=0 $ for $m/2\leq l \leq m-1$. 
As we have seen in the proof of the surjectivity in  Theorem \ref{t.Groth.sp}  (see the arguments from \eqref{eq.extension} to 
the end of the proof), 
$$
\oplus_{l=0}^{m-1}\langle B_l u , w^{(i)}_l \rangle_{\partial D} = 
 \oplus_{l=0}^{m-1} \langle B_l u , 
\tilde {\mathcal G} (\oplus _{\nu=0}^{m-1} w^{(i)}_\nu)^+ 
\rangle_{\partial D}
$$
and hence $\oplus_{l=0}^{m-1}\langle B_l u , w^{(i)}_l \rangle_{\partial D}$ can be identified 
within $\tilde S_{A^*} ^{m-s,p'}(\hat X\setminus \overline D)$ as  
$$
\oplus_{l=0}^{m-1}\langle B_l u , v^{(i)}_l \rangle_{{\rm Gr}} \mbox{ with }
v^{(i)} = \tilde {\mathcal G} (\oplus_{\nu=0}^{m-1} w^{(i)}_\nu)^+,
$$
where of course,  $v^{(i)} \in  \tilde S^{m-s,p'}_{A^*} 
(\hat X\setminus  D) $, $1\leq i \leq N$. Let us see that the system 
$\{v^{(i)}\}$ is linearly independent $\tilde S^{m-s,p'}_{A^*} 
(\hat X\setminus \overline D)$. Again, if we set   
\begin{equation}\label{eq.Green.Dir}
v (y) = 
\sum_{i=1}^{N} v^{(i)} \, \vec {C} _i 
=  \tilde {\mathcal G} \Big(\oplus_{\nu=0}^{m-1} \Big(\sum_{i=1}^{N} w^{(i)}_\nu \vec {C} _i \Big) \Big)^+
\end{equation}
with some vectors $\{\vec {C} _1 , \dots \vec {C} _N\} \subset {\mathbb R}^k$. As $X\setminus \overline D$ is 
connected then to the Unique Continuation Property, if $v \equiv 0$ in $U$ in $X\setminus \overline D$ then it 
vanishes on $X \setminus \overline D$. Now, applying Lemma \ref{l.chract.traces.A} to 
the Green operator $\tilde {\mathcal G}$ for the operator $A^*$ instead of the Green operator ${\mathcal G}$ the 
operator $A$, we conclude that there is a vector $w \in S_{A^*}^{m-s,p'} (D)$ such that 
\begin{equation}\label{eq.Cau.Dir}
\oplus _{\nu=0}^{m/2-1} B_\nu w =0 \mbox{ on } \partial D,  \, \oplus _{\nu=m/2}^{m-1} B_\nu w =
\sum_{i=1}^{N} w^{(i)}_\nu \vec {C} _i  
\mbox{ on } \partial D.
\end{equation}
In particular,  $w$ is a solution to the Dirichlet Problem 
\begin{equation*}
\left\{ 
\begin{array}{lcc}
\tilde A^* \tilde A w=0 & in & D, \\
\oplus_{l=0}^{m/2-1} B_j w  = 0 & on & \partial D.
\end{array}
\right.
 \end{equation*}
and hence, under the Unique Continuation Property, $w=0$ in $D$, see \cite[Theorem 2.26]{ShTaBDRK}. 
Finally, according to \eqref{eq.Cau.Dir}, 
$$
\sum_{i=1}^{N} w^{(i)}_\nu \vec {C} _i   = 0 \mbox{ on } \partial D.
$$
As system \eqref{eq.vj.weak.Dir} is linearly independent, we conclude that $\vec {C} _i = 0$ 
for all $1\leq i \leq N$, i.e. the system $\{v^{(i)} \}$ is linearly independent in 
$\tilde S^{m-s,p'}_{A^*} (\hat X\setminus \overline D)$.

Thus, the statement follows from Theorem \ref{t.sparse.s.p.D.fund.vj}.
\end{proof}

Note that if $s>m/2-1+n/p$ then, by the Sobolev Embedding Theorem, $S_A^{s,p} (D) \subset 
[C^{m/2-1} (\overline D)]^k$ and  then $B_j u \in [C^{m/2-j-1} (\partial D)]^k$, i.e. we may calculate 
them point-wisely on $\partial D$. Then we define 
\begin{equation} \label{eq.M.points.Bj.Dir}
{\mathcal M} u= \big( \oplus_{l=0}^{m/2-1} ( B_l u) (x^{(1,l)}) , \dots 
\oplus_{l=0}^{m/2-1}( B_l u) (x^{(N,l)}) \big) ,
\end{equation}
with a set $x^{(\cdot)}_N$ of some pair-wise distinct points $\{ x^{(l,j)} \}_{1\leq j \leq N \atop 
0 \leq l \leq m/2-1} \subset \partial D$.

\begin{cor} \label{c.sparse.s.p.D.fund.Bj.Dir} 
Let $s>m/2-1+n/p$ and ${\mathcal M} = {\mathcal M}_{kNm/2}$ be given by \eqref{eq.M.points.Bj.Dir}.
Under the hypothesis of Proposition \ref{p.exist.s.p.D}, 
given pair-wise distinct points $\{ x^{(i,l)} \}_{1\leq i \leq N \atop 
0 \leq l \leq m/2-1}\subset \partial D$,
there is a sparse minimizer 
$u^{\sharp}= u^{\sharp}_{kNm/2}$ to Problem \ref{pr.A.s.p.D} 
in the form  \eqref{eq.sparse.u}  
with  some points $\{ y^{(j,l)} \}_{1\leq j \leq N \atop 
0 \leq l \leq m/2-1} $ from $X \setminus D$ and $\{ \vec{C}^{(j,l)} \}_{1\leq j \leq N \atop 
0 \leq l \leq m/2-1} $  from ${\mathbb R}^k$. 
\end{cor}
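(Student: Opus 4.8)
The plan is to derive this corollary from Theorem \ref{t.sparse.s.p.D.fund.vj}, in complete analogy with Corollaries \ref{c.sparse.s.p.D.fund} and \ref{c.sparse.s.p.D.fund.vj.weak.Dir}, the role of $N$ there being played here by $Nm/2$. Since $s>m/2-1+n/p$ and $0\le l\le m/2-1$, the Sobolev Embedding Theorem together with the trace operators \eqref{eq.trace.B_j} shows that for every triple $(i,l,r)$, $1\le i\le N$, $0\le l\le m/2-1$, $1\le r\le k$, the functional $v^{(i,l,r)}\colon u\mapsto (B_l u)_r(x^{(i,l)})$ is well defined and continuous on $S^{s,p}_A(D)$; moreover the operator \eqref{eq.M.points.Bj.Dir} is precisely the operator \eqref{eq.M.vj} associated with the system $v^{(\cdot)}=\{v^{(i,l,r)}\}$, and by Theorem \ref{t.Groth.sp} this system is uniquely identified within $\tilde S^{m-s,p'}_{A^{*}}(\hat X\setminus D)$.

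The key step, which I expect to be the main obstacle, is to make this identification explicit as a derivative of $\Phi$ at the boundary point $x^{(i,l)}$. For $x\in D$ one may differentiate the reproducing identity $u(x)=\langle\Phi^{*}(x,\cdot),u\rangle_{\mathrm{Gr}}$ of Corollary \ref{c.repr.ker} in the variable $x$ — legitimate because for fixed $x\in D$ the kernel $\Phi^{*}(x,\cdot)$ is smooth on $X\setminus\overline D$ and the pairing \eqref{eq.pairing.Gr.fin} acts only in the second argument — obtaining $(B_l u)(x)=\langle B_{l,x}\Phi^{*}(x,\cdot),u\rangle_{\mathrm{Gr}}$ for $x\in D$. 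Letting $x\to x^{(i,l)}\in\partial D$, the left-hand side tends to the trace $(B_l u)(x^{(i,l)})$ (which exists thanks to $s>l+n/p$), so one is reduced to verifying that the function $y\mapsto B_{l,x}\Phi^{*}(x,y)|_{x=x^{(i,l)}}$ — a solution of $A^{*}$ on $X\setminus\{x^{(i,l)}\}$, regular at infinity, with a singularity at the boundary point $x^{(i,l)}$ of order $|x^{(i,l)}-y|^{m-l-n}$ — belongs to $\tilde S^{m-s,p'}_{A^{*}}(\hat X\setminus\overline D)$, and that the limit holds in that Banach space. This is exactly where the hypothesis $s>m/2-1+n/p$ enters: it places $|\cdot-x^{(i,l)}|^{m-l-n}$ into $W^{m-s,p'}$ near $\partial D$, in the same way that $\Phi(\cdot,y^{(j)})\in S^{s,p}_A(D)$ for $y^{(j)}\in\partial D$ was discussed before Theorem \ref{t.sparse.s.p.D.fund.vj}. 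The same computation gives, for $y\notin D$, that the $(i,l)$-slot of ${\mathcal M}\Phi(\cdot,y)$ is the $(k\times k)$-matrix $V^{(i,l)}(y)=(B_{l,x}\Phi(x,y))|_{x=x^{(i,l)}}$, so these are the blocks of the matrix $W(v^{(\cdot)},y^{(\cdot)})$ of \eqref{eq.W.vj}.

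Next I would check that $v^{(\cdot)}$ is linearly independent in $\tilde S^{m-s,p'}_{A^{*}}(\hat X\setminus\overline D)$, imitating Lemma \ref{l.lin.indep}. Suppose $v(y)=\sum_{i,l}(B_{l,x}\Phi(x,y))|_{x=x^{(i,l)}}\vec C_{i,l}$, with $\{\vec C_{i,l}\}\subset{\mathbb R}^k$, vanishes on a non-empty open subset of $X\setminus\overline D$; by the UCP for $A^{*}$ it then vanishes on all of $X\setminus\{x^{(i,l)}\}$. Testing against $A\varphi$ with $\varphi\in[C^{\infty}_0(X)]^k$ supported in a small neighbourhood of a single point $x^{(i_0,l_0)}$ (the points being pairwise distinct), the identity $0=\int_X v^{*}(y)(A\varphi)(y)\,dy$ reduces, by $A_x\Phi(x,y)=\delta_{x-y}$ and the fact that the diagonal singularity of $\Phi$ matches $|x-y|^{m-n}$, to $(\vec C_{i_0,l_0})^{*}(B_{l_0}\varphi)(x^{(i_0,l_0)})=0$; as $(B_{l_0}\varphi)(x^{(i_0,l_0)})$ runs over all of ${\mathbb R}^k$ when $\varphi$ varies, $\vec C_{i_0,l_0}=0$, and since $(i_0,l_0)$ is arbitrary the system is linearly independent. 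This lets me apply Lemma \ref{l.non.degen.vj} to the system indexed by the $Nm/2$ slots $(i,l)$: the determinant $\det W(v^{(\cdot)},y^{(\cdot)})$ vanishes only when $(y^{(j,l)})$ lies in a product of sets with empty interior in $X\setminus D$, so one can pick pairwise distinct points $\{y^{(j,l)}\}\subset X\setminus\overline D$ for which $W$ is invertible, and Theorem \ref{t.sparse.s.p.D.fund.vj} then furnishes the sparse minimizer $u^{\sharp}_{kNm/2}$ in the form \eqref{eq.sparse.u} — concretely, given by formula \eqref{eq.sparse.vj} with this $W$.
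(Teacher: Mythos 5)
Your argument is correct in substance but takes a genuinely different route from the paper's. The paper simply shows that the $kNm/2$ functionals $u\mapsto (B_l u)(x^{(i,l)})$, $1\le i\le N$, $0\le l\le m/2-1$, are linearly independent in $(S^{s,p}_A(D))^*$ by invoking the \emph{well-posedness} of the Dirichlet Problem~\ref{pr.Dir}: one solves the Dirichlet problem with a bump $\varphi_{i_0,l_0}$ in the slot $l_0$, localized near $x^{(i_0,l_0)}$ and vanishing near the other points, and concludes the coefficient $c_{i_0,l_0}$ must vanish. It then cites Corollary~\ref{c.sparse.s.p.D.fund.vj.weak.Dir}. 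You instead work on the dual side, identifying the functionals concretely with the boundary derivatives $y\mapsto (B_{l,x}\Phi(x,y))|_{x=x^{(i,l)}}$ and proving linear independence in the style of Lemma~\ref{l.lin.indep}, via UCP for $A^*$ and a localized test against $A\varphi$. Both are valid; the paper's proof is shorter because it offloads the work to the solvability theory of the Dirichlet problem, while yours is self-contained and stays closer to the kernel mechanics already set up for Corollary~\ref{c.sparse.s.p.D.fund}.

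One remark on economy: the step you single out as \emph{the main obstacle} — proving the family $B_{l,x}\Phi^*(x,\cdot)$ converges in $\tilde S^{m-s,p'}_{A^*}(\hat X\setminus\overline D)$ as $x\to x^{(i,l)}\in\partial D$ — is not actually needed. To run the linear-independence argument you only need the scalar identity
\begin{equation*}
\sum_{i,l}\vec C_{i,l}^{\,T}\bigl(B_{l,x}\Phi(x,y)\bigr)\big|_{x=x^{(i,l)}}=0 \quad\text{for all } y\in X\setminus\overline D,
\end{equation*}
and this is obtained directly by inserting $u=\Phi_\rho(\cdot,y)\in S_A(\overline D)\subset S^{s,p}_A(D)$, $y\in X\setminus\overline D$, into the assumed relation $\sum_{i,l}\vec C_{i,l}^{\,T}(B_l u)(x^{(i,l)})=0$; the trace $(B_l\Phi_\rho(\cdot,y))(x^{(i,l)})$ is a classical pointwise value since $\Phi(\cdot,y)$ is smooth up to $\overline D$ for $y\notin\overline D$. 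From here UCP gives vanishing on $X\setminus\{x^{(i,l)}\}$ and your localized test finishes, exactly as you wrote — but with no limit argument and no explicit membership of the boundary derivatives in $\tilde S^{m-s,p'}_{A^*}$. What you do still need from the hypothesis $s>m/2-1+n/p$ is merely that each $u\mapsto (B_l u)(x^{(i,l)})$ defines a \emph{continuous} functional on $S^{s,p}_A(D)$ (Sobolev embedding plus the trace operators~\eqref{eq.trace.B_j}); the Grothendieck duality in Theorem~\ref{t.Groth.sp} then supplies the identification within $\tilde S^{m-s,p'}_{A^*}(\hat X\setminus D)$ abstractly. With this simplification your proof is complete and rests on Theorem~\ref{t.sparse.s.p.D.fund.vj} and Lemma~\ref{l.non.degen.vj} exactly as you intended.
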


\begin{proof} Let us show that the composition of $\delta$-functionals, related to 
pair-wise distinct points $\{ x^{(j,l)} \}_{1\leq j \leq N \atop 
0 \leq l \leq m/2-1}\subset \partial D$, with action of differential operators
$B_j$, $0 \leq j \leq {m/2-1}$,  are linearly independent in 
$(S_A^{s,p} (D))^*$. 
Indeed, let 
$$
\sum_{j=1}^N  \sum_{l=0}^{m-1} c_{j,l} ( B_l u) (x^{(j,l)}) =0 \mbox{ for all } u \in S_A^{s,p} (D)
$$
with some real numbers $c_{j,l}$. 
Taking a test function $\varphi_{i_0,l_0} \in [C^\infty  (\partial D)]^k$ supported in a neighbourhood 
$V(x^{(i_0, l_0)})$
of a fixed point $x^{(i_0,l_0)}$, satisfying $\{ x^{(i,l)}\}_{(i,l)\ne (i_0,l_0)} \not \in V(x^{(i_0,l_0)})$, and using the fact that Problem \ref{pr.Dir} is well-posed we see 
that there is an element $u_0 \in S_A^{s,p} (D)$ such that $B_l u_0 = 0$ on $\partial D$ if $l\ne l_0$
and $B_{l_0} u_0 = \varphi_{i_0,l_0} $ on $\partial D$. As we may alway choose 
$\varphi_{i_0,l_0} (x^{(i_0, l_0)}) \ne 0$ we see that 
$$
0=\sum_{j=1}^N  \sum_{l=0}^{m/2-1} c_{j,l} ( B_l u_0) (x^{(j,l)}) = c_{j_0,l_0} \varphi_{i_0,l_0} 
(x^{(i_0, l_0)}) 
$$
and hence $c_{j_0,l_0} = 0$ with arbitrary $(j_0,l_0)$. Thus the statement of this corollary follows from  Corollary \ref{c.sparse.s.p.D.fund.vj.weak.Dir}.
\end{proof}

In this case operator equation \eqref{eq.M.infty} is equivalent to the following: given 
the set of pairwise distinct points $\{ x^{(i)}\}_{i=1}^{\infty} \subset \overline D$, and a vector 
$\vec{h}^{(0)} \in {\mathfrak M}$, 
$$\vec{h}^{(0)}= ( \oplus_{l=0}^{m/2-1}  \vec{h}_{1,l}, \dots 
\oplus_{l=0}^{m/2-1}  \vec{h}_{N,l}, \dots), \, \vec{h}_{i,l} \in 
{\mathbb R}^k,
$$ 
 find   $u \in S_A ^{s,p} (D)$ such that 
\begin{equation}
\label{eq.M.infty.Dir}
\oplus_{l=0}^{m/2-1} B_l u  (x^{(i,l)}) = \oplus_{l=0}^{m/2-1} \vec{h}_{i,l} \mbox{ for all } i\in \mathbb N.
\end{equation}

\begin{cor} \label{c.sparse.Dir.infty} 
Let $s>m/2-1+n/p$, ${\mathcal M}$ be given by \eqref{eq.M.points.Bj.Dir}, the functional $F$ be given by 
\eqref{eq.F.norm} and  $\lim\limits_{N\to +\infty} b_N =0$. 
Under the hypothesis of Proposition \ref{p.exist.s.p.D},  
let the pair-wise distinct points $\{ x^{(i,l)}\}_{i\in \mathbb N \atop 0\leq l \leq m/2-1} \subset 
\partial D$ be chosen in such a way that for each $0\leq l\leq m/2-1$ the set 
$\{ x^{(i,l)}\}_{i\in \mathbb N \atop 0\leq l \leq m/2-1}$  be dense in $\partial D$. 
Then 1)  equation \eqref{eq.M.infty.Dir} 
can not have more than one solution $u^{(0)}_\infty$ in $S_A ^{s,p} (D)$; 2) if 
equation \eqref{eq.M.infty.Dir} has a solution $u^{(0)}_\infty$ in $S_A ^{s,p} (D)$ then the sequence of 
minimizers $\{u^{(0)}_{kNm/2}\}$ to Problem \ref{pr.A.s.p.D} converges weakly in $S_A ^{s,p} (D)$ to it; 
3) if $ \oplus_{l=0}^{m/2-1} \vec{h}_{i,l} = \oplus_{l=0}^{m/2-1}  u_l (x^{(i,l)})$ for some set 
$\oplus_{l=0}^{m/2-1} u_l  \in \oplus_{l=0}^{m/2-1} [B^{s-l-1/p,p} (\partial D)]^k $, $i \in \mathbb N$, then the  sequence $\{u^{(0)}_{kNm/2}\}$ converges to the unique solution  $u^{(0)}$ to Dirichlet Problem \ref{pr.Dir}. 
\end{cor}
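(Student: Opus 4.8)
The plan is to deduce all three claims from Corollary \ref{c.infty}, whose only non‑immediate hypothesis in the present setting is the density, in $(S_A^{s,p}(D))^*$, of the span of the system of functionals used to build ${\mathcal M}$; this density, in turn, will follow from the same Unique Continuation / well‑posedness argument that yields claim 1).

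First I would observe that, since $s>m/2-1+n/p$, the Sobolev Embedding Theorem gives $S_A^{s,p}(D)\subset[C^{m/2-1}(\overline D)]^k$, so each $B_l u$ ($0\le l\le m/2-1$, $u\in S_A^{s,p}(D)$) is a continuous $k$‑vector function on $\partial D$, and likewise any Dirichlet datum $u_l\in[B^{s-l-1/p,p}(\partial D)]^k$ is continuous on $\partial D$; hence the point values $(B_l u)(x^{(i,l)})$ in \eqref{eq.M.points.Bj.Dir} and $u_l(x^{(i,l)})$ in claim 3) are well defined. For claim 1), if $u_1,u_2\in S_A^{s,p}(D)$ both solve \eqref{eq.M.infty.Dir} then $w:=u_1-u_2$ satisfies $(B_l w)(x^{(i,l)})=0$ for all $i\in\mathbb N$ and $0\le l\le m/2-1$; fixing $l$ and using continuity of $B_l w$ together with the density of $\{x^{(i,l)}\}_{i\in\mathbb N}$ in $\partial D$, we get $B_l w\equiv0$ on $\partial D$ for every such $l$. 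Thus $w$ solves the homogeneous Dirichlet Problem \ref{pr.Dir}, which under the standing hypotheses has only the trivial solution (this is where the UCP is essential), see \cite[Theorem 2.26]{ShTaBDRK}; hence $w=0$.

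For claim 2), I would verify the hypothesis of Lemma \ref{l.M.infty}. By Corollary \ref{c.sparse.s.p.D.fund.Bj.Dir} (via Green's formula \eqref{eq.Green.M.B.2}), the relevant system $v^{(\cdot)}_\infty$ consists of the functionals $u\mapsto(B_l u)_r(x^{(i,l)})$ with $i\in\mathbb N$, $0\le l\le m/2-1$, $1\le r\le k$, and these are linearly independent by the argument in the proof of that corollary. Since $S_A^{s,p}(D)$ is reflexive (Theorem \ref{t.Groth.sp}), the span of $v^{(\cdot)}_\infty$ is dense in $(S_A^{s,p}(D))^*$ if and only if every $u\in S_A^{s,p}(D)$ annihilated by all of $v^{(\cdot)}_\infty$ vanishes — but that is precisely the implication established in claim 1) (with $u$ in place of $w$). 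Thus, with $F$ given by \eqref{eq.F.norm} and $\lim_{N\to\infty}b_N=0$, all hypotheses of Corollary \ref{c.infty} hold, and it yields: if \eqref{eq.M.infty.Dir} has a solution $u^{(0)}_\infty\in S_A^{s,p}(D)$, then the minimizers $\{u^{(0)}_{kNm/2}\}$ of Problem \ref{pr.A.s.p.D} converge weakly to it in $S_A^{s,p}(D)$ (hence, by Remark \ref{r.convergence}, uniformly on compact subsets of $D$ together with all derivatives).

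Finally, for claim 3), if $\oplus_{l=0}^{m/2-1}\vec h_{i,l}=\oplus_{l=0}^{m/2-1}u_l(x^{(i,l)})$ for a datum $\oplus_{l=0}^{m/2-1}u_l\in\oplus_{l=0}^{m/2-1}[B^{s-l-1/p,p}(\partial D)]^k$, then by well‑posedness of Problem \ref{pr.Dir} there is a unique $u^{(0)}\in[W^{s,p}(D)]^k$ with $\tilde A^*\tilde A u^{(0)}=0$ in $D$ (so $u^{(0)}\in S_A^{s,p}(D)$) and $B_l u^{(0)}=u_l$ on $\partial D$; evaluating at the points $x^{(i,l)}$ shows $u^{(0)}$ solves \eqref{eq.M.infty.Dir}, so claim 2) applies with $u^{(0)}_\infty=u^{(0)}$ and $\{u^{(0)}_{kNm/2}\}$ converges weakly to $u^{(0)}$. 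I do not expect a real obstacle here; the only subtle point is the passage from density of the sampling points to an identity on all of $\partial D$, which is exactly why the threshold $s>m/2-1+n/p$ is imposed.
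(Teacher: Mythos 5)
Your proposal is correct and follows essentially the same route as the paper: both reduce claims 1) and 2) to density of the sampling functionals in $(S_A^{s,p}(D))^*$ via Lemma \ref{l.M.infty} and Corollary \ref{c.infty}, pass this density (through reflexivity) to the uniqueness statement that $B_l u=0$ at the dense sampling points forces $B_l u\equiv 0$ on $\partial D$ (where $s>m/2-1+n/p$ supplies the needed continuity), and conclude with the Uniqueness Theorem for the Dirichlet Problem \ref{pr.Dir}. Claim 3) is handled identically in both, invoking well-posedness of Problem \ref{pr.Dir} to produce the solution to \eqref{eq.M.infty.Dir} and then applying claim 2).
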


\begin{proof} 
According to  Lemma \ref{l.M.infty} and Corollary \ref{c.infty}, 
in order to prove the first and the second statements,  we have to show that 
 the  linear envelope of functionals $\{ f_{i,l}  \}_{i\in \mathbb N \atop 0\leq l \leq m/2-1}$ induced 
by the operators $\{ ( B_l u) (x^{(i,l)}) \}_{i\in \mathbb N \atop 0\leq l \leq m/2-1}$ on 
the space $S_A ^{s,p} (D)$
is everywhere dense in $(S^{s,p}_A (D))^*$.
But, as the space $S^{s,p}_A (D)$ is reflexive, this is equivalent to the fact that an 
element $u \in  S^{s,p}_A (D)$, vanishing on $\{ f_{i,l}  \}_{i\in \mathbb N \atop 0\leq l \leq m/2-1}$
is identically zero.
On the other hand, if points $\{ x^{(i,l)}\}_{i=1}^{\infty} \subset \partial D$ are dense 
in a $\partial D$ for each $l$, $0\leq l \leq m/2-1$,
 then for any solutions $u$, satisfying $\oplus_{l=0}^{m/2-1} B_j u (x^{(i,l)})=0$ 
for all $i \in \mathbb N$ we see that $\oplus_{l=0}^{m/2-1} B_j u =0$ on $\partial D$. Hence, by the Uniqueness Theorem for the Dirichlet Problem \ref{pr.Dir}
see, for instance, \cite[\S 2.4]{ShTaBDRK}, $u \equiv  0$  in $D$.

Finally, as we have mentioned before Problem \ref{pr.Dir} has a unique solution $u^{(0)} \in S_A ^{s,p} (D)$ 
for each data $\oplus_{l=0}^{m/2-1} u_l \in \oplus_{l=0}^{m/2-1} [B^{s-l-1/p,p} (\partial D)]^k$. 
This means that operator equation \eqref{eq.M.infty.Dir} is solvable and $u^{(0)}$ is its unique solution.
\end{proof} 

Of course, we may do the same in the weak sense. In this case operator equation \eqref{eq.M.infty} is equivalent to the following: given 
the set 
$$
w^{(\cdot)}_\infty  = \{ \oplus_{l=0}^{m/2-1} w^{(i)}_l \} _{i \in \mathbb N}  
 \subset \oplus_{l=0}^{m/2-1}[B^{l-s+1/p,p'} (\partial D)]^k
$$ 
 and a vector 
$\vec{h}^{(0)} \in {\mathfrak M}$, 
$$\vec{h}^{(0)}= ( \oplus_{l=0}^{m/2-1}  \vec{h}_{1,l}, \dots 
\oplus_{l=0}^{m/2-1}  \vec{h}_{N,l}, \dots), \, \vec{h}_{i,l} \in 
{\mathbb R}^k,
$$ 
 find   $u \in S_A ^{s,p} (D)$ such that 
\begin{equation}
\label{eq.M.infty.Dir.weak}
\oplus_{l=0}^{m/2-1}\langle B_l u , w^{(i)}_l \rangle_{\partial D}= \oplus_{l=0}^{m/2-1} \vec{h}_{i,l} \mbox{ for all } i\in \mathbb N.
\end{equation}

\begin{cor} \label{c.sparse.Dir.infty.weak} 
Let ${\mathcal M}$ be given by \eqref{eq.M.vj.weak.Dir} and the functional $F$ be given by 
\eqref{eq.F.norm} and  $\lim\limits_{N\to +\infty} b_N =0$. 
Under the hypothesis of Proposition \ref{p.exist.s.p.D},  
let $w^{(\cdot)}_\infty$ be a linearly independent system with its linear envelope 
being dense in  $\oplus_{l=0}^{m/2-1} [B^{s-l-1/p,p} (\partial D)]^k $. 
Then 1)  equation \eqref{eq.M.infty.Dir.weak} 
can not have more than one solution $u^{(0)}_\infty$ in $S_A ^{s,p} (D)$; 2) if 
equation \eqref{eq.M.infty.Dir.weak} has a solution $u^{(0)}_\infty$ in $S_A ^{s,p} (D)$ then the sequence of 
minimizers $\{u^{(0)}_{kNm/2}\}$ to Problem \ref{pr.A.s.p.D} converges weakly in $S_A ^{s,p} (D)$ to it; 
3) if $ \oplus_{l=0}^{m/2-1} \vec{h}_{i,l} = \oplus_{l=0}^{m/2-1}\langle u_l  , w^{(1)}_l \rangle_{\partial D}$
 for some set 
$\oplus_{l=0}^{m/2-1} u_l  \in \oplus_{l=0}^{m/2-1} [B^{s-l-1/p,p} (\partial D)]^k $, $i \in \mathbb N$, then the  sequence  $\{u^{(0)}_{kNm/2}\}$ converges to the unique solution  $u^{(0)}$ to Dirichlet Problem \ref{pr.Dir}. 
\end{cor}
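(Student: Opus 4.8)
The plan is to follow verbatim the scheme used for Corollary \ref{c.sparse.Dir.infty}, only replacing the point evaluations $(B_l u)(x^{(i,l)})$ by the weak pairings $\langle B_l u, w^{(i)}_l\rangle_{\partial D}$. First I would record that the setting of Theorem \ref{t.sparse.s.p.D.fund.vj} is indeed in force here: by Corollary \ref{c.sparse.s.p.D.fund.vj.weak.Dir} (setting $w^{(i)}_\nu := 0$ for $m/2 \leq \nu \leq m-1$) the finite subsystems of $w^{(\cdot)}_\infty$ are linearly independent in $(S_A^{s,p}(D))^*$ and may be identified, via Theorem \ref{t.Groth.sp}, with the linearly independent solutions $v^{(i)} = \tilde{\mathcal G}(\oplus_{\nu=0}^{m-1} w^{(i)}_\nu)^+ \in \tilde S_{A^*}^{m-s,p'}(\hat X \setminus D)$; after the usual normalisation \eqref{eq.bound.vj} the hypotheses of Corollary \ref{c.infty} with $F$ as in \eqref{eq.F.norm} are therefore satisfied.

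The key step, as in Corollary \ref{c.sparse.Dir.infty}, is to show that the linear span of the scalar functionals arising from $u \mapsto \langle B_l u, w^{(i)}_l\rangle_{\partial D}$, $i \in \mathbb N$, $0 \leq l \leq m/2-1$, is everywhere dense in $(S_A^{s,p}(D))^*$. Since $S_A^{s,p}(D)$ is reflexive, by the Hahn--Banach theorem this is equivalent to the implication: if $u \in S_A^{s,p}(D)$ satisfies $\langle B_l u, w^{(i)}_l\rangle_{\partial D} = 0$ for all $i$ and all $l$, then $u \equiv 0$. Interpreting the density assumption on $w^{(\cdot)}_\infty$ in the dual Besov space that naturally pairs with the traces/weak boundary values $B_l u \in [B^{s-l-1/p,p}(\partial D)]^k$, and using reflexivity of these Besov spaces for $1 < p < +\infty$, the vanishing of all these pairings forces $\oplus_{l=0}^{m/2-1} B_l u = 0$ on $\partial D$. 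Then $u$ is a solution of the homogeneous Dirichlet Problem \ref{pr.Dir} for $\tilde A^* \tilde A$, and its uniqueness theorem (which relies essentially on the UCP, see \cite[\S 2.4]{ShTaBDRK}) gives $u \equiv 0$ in $D$. This density yields statement 1) through Lemma \ref{l.M.infty} and statement 2) through Corollary \ref{c.infty}.

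For statement 3), if $\oplus_{l=0}^{m/2-1}\vec h_{i,l} = \oplus_{l=0}^{m/2-1}\langle u_l, w^{(i)}_l\rangle_{\partial D}$ for a fixed datum $\oplus_{l=0}^{m/2-1} u_l \in \oplus_{l=0}^{m/2-1}[B^{s-l-1/p,p}(\partial D)]^k$, then Problem \ref{pr.Dir} with that datum has a unique solution $u^{(0)} \in S_A^{s,p}(D)$; by construction $u^{(0)}$ solves operator equation \eqref{eq.M.infty.Dir.weak}, so by statement 1) it is the only solution, and statement 2) then shows that the sequence $\{u^{(0)}_{kNm/2}\}$ of minimizers to Problem \ref{pr.A.s.p.D} converges weakly to $u^{(0)}$ in $S_A^{s,p}(D)$.

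I expect the only genuinely delicate point to be the reduction ``all weak pairings vanish $\Rightarrow \oplus_{l=0}^{m/2-1} B_l u = 0$ on $\partial D$'': one must be careful that the density of the span of $w^{(\cdot)}_\infty$ is stated in the Besov space dual to the space in which the weak boundary values $B_l u$ live, so that reflexivity turns a dense span into a total family separating points of the predual. Everything else is a routine transcription of the arguments already given for Corollary \ref{c.sparse.Dir.infty}.
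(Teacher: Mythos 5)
Your proof is correct and mirrors exactly the scheme the paper uses for the analogous Corollary \ref{c.sparse.Dir.infty}; the paper in fact states Corollary \ref{c.sparse.Dir.infty.weak} without giving its own proof, so yours is a faithful reconstruction of what the authors leave implicit. You helpfully flag the one genuinely delicate point: the corollary as printed asserts density of the span of $w^{(\cdot)}_\infty$ in $\oplus_{l=0}^{m/2-1}[B^{s-l-1/p,p}(\partial D)]^k$, which is inconsistent with $w^{(i)}_l$ living in the dual scale $\oplus_{l=0}^{m/2-1}[B^{l-s+1/p,p'}(\partial D)]^k$ (compare \eqref{eq.vj.weak.Dir}); reading the hypothesis as density in $\big(\oplus_{l=0}^{m/2-1}[B^{s-l-1/p,p}(\partial D)]^k\big)^*$ — exactly how the parallel Cauchy-problem Corollary \ref{c.sparse.Cau.infty.weak} is phrased — is the correct repair, and your use of reflexivity of the Besov spaces to deduce $\oplus_{l=0}^{m/2-1} B_l u=0$ on $\partial D$ from the vanishing of all pairings is then the right substitute for the dense-set-of-points argument used in the strong-trace version. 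The remaining steps (uniqueness of the homogeneous Dirichlet problem, Lemma \ref{l.M.infty}, Corollary \ref{c.infty}, and well-posedness of Problem \ref{pr.Dir} to get statement 3)) are the same as in the paper's proof of Corollary \ref{c.sparse.Dir.infty}.
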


\subsection{The Cauchy problem}
\label{s.Cau}

In a similar way,
 we consider the ill-posed Cauchy Problem for elliptic operators.
First, we pick a non-empty relatively open connected set $\Gamma\subset \partial D$ such that 
$\partial \Gamma$ is smooth and $\partial D \setminus \overline \Gamma$ is non-empty, relatively 
open, too. We also fix a Dirichlet system $\{ B_j\}_{j=0}^{m-1}$ on $\partial D$. 

If $s\geq m$ then, the space $\oplus_{l=0}^{m-1} u_l\in \oplus _{l=0}^{m-1} B^{s-l-1/p,p} (\Gamma)$ can be 
defined with the use of the induced Lebesgue measure on $\partial D$. By Whitney type Theorem, any element 
$\oplus_{l=0}^{m-1} u_l\in \oplus _{l=0}^{m-1} B^{s-l-1/p,p} (\Gamma)$ can be extended up to an element 
$\oplus_{l=0}^{m-1} \tilde u_l\in B^{s-l-1/p,p} (\partial D)$ preserving the norm of the original element. Two 
such extensions $\oplus_{l=0}^{m-1}\tilde u_l$ and $\oplus_{l=0}^{m-1} \hat u_l$ are equivalent in the sense that 
their difference equals to zero on $\Gamma$. Thus the space $\oplus_{l=0}^{m-1} B^{s-l-1/p,p} (\Gamma)$ can be 
naturally treated as a factor space $\oplus_{l=0}^{m-1}\tilde B^{s-l-1/p,p} (\Gamma) = 
\oplus_{l=0}^{m-1}B^{s-l-1/p,p} (\partial D)/B^{s-l-1/p,p}_\Gamma (\partial D)$ with the standard factor norm, 
where $\oplus_{l=0}^{m-1} B^{s-l-1/p,p}_\Gamma (\partial D)$ is the closed subspace of 
$\oplus_{l=0}^{m-1} B^{s-l-1/p,p} (\partial D)$ 
consisting of functions that equal to zero on $\overline \Gamma$.

For $s<m$ we take it for the definition, i.e. 
$$
\oplus_{l=0}^{m-1} \tilde B^{s-l-1/p,p} (\Gamma) = 
\oplus_{l=0}^{m-1} B^{s-l-1/p,p} (\partial D)/B^{s-l-1/p,p}_\Gamma (\partial D), \, s \in \mathbb Z.
$$

\begin{prob} \label{pr.Cau}
Given datum $\oplus_{l=0}^{m-1} u_l \in \oplus_{l=0}^{m-1} [\tilde B^{s-l-1/p,p} (\Gamma)]^k$
find $u \in [W^{s,p} (D)]^k$ such that 
\begin{equation*}
\left\{ 
\begin{array}{lcc}
Au=0 & in & D, \\
\oplus_{l=0}^{m-1} B_l u  = \oplus_{l=0}^{m-1} u_l  & on & \Gamma.
\end{array}
\right.
 \end{equation*}
\end{prob}
Of course, if $s\geq m$ then $B_j u$ are traces on $\partial D$ and 
for $s<m$ we may treat $B_j u$ as weak boundary values on $\partial D$.

As it is known, this problem is ill-posed. However, there are many publications related 
to the problem for different operators in different spaces; moreover, it plays an essential role in many 
applications (Geophysics, Electrodynamics, 
Cardiology, etc.), see, for instance \cite{LvRShi}, \cite{Tark36},  
and their bibliography.  Under assumptions above, it has no more than one solution, 
see \cite[Theorem 2.8]{ShTaLMS}, \cite[Ch. 10]{Tark36} and  
it is densely solvable, see, for instance, \cite[Lemma 3.3]{ShTaLMS} for 
the problem in Hardy classes in $D$  or we can easily prove with the use of Duality Theorem \ref{t.Groth.sp}. 

\begin{prop} \label{p.densely}
Under the hypothesis of Proposition \ref{p.exist.s.p.D}, let  $\Gamma\subset \partial D$ a non-empty 
relatively open connected set such that 
$\partial \Gamma$ is smooth and $\partial D \setminus \overline \Gamma$ is non-empty, 
relatively open, too. If $1<p<+\infty$ then Problem \ref{pr.Cau} is densely solvable.
\end{prop}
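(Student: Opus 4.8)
The plan is to argue by duality, as suggested just before the statement. Since ``densely solvable'' means the set of admissible data for which Problem \ref{pr.Cau} has a solution is dense in $\mathcal D:=\oplus_{l=0}^{m-1}[\tilde B^{s-l-1/p,p}(\Gamma)]^k$, by the Hahn--Banach theorem it suffices to show that the only $\phi\in\mathcal D^*$ annihilating the solvable data $\{(\oplus_{l=0}^{m-1}B_lu)_{|\Gamma}:u\in S_A^{s,p}(D)\}$ is $\phi=0$. First I would identify $\mathcal D^*$: from the definition $\tilde B^{s-l-1/p,p}(\Gamma)=B^{s-l-1/p,p}(\partial D)/B^{s-l-1/p,p}_\Gamma(\partial D)$, the dual $\mathcal D^*$ is the annihilator of $\oplus_{l=0}^{m-1}[B^{s-l-1/p,p}_\Gamma(\partial D)]^k$ inside $\oplus_{l=0}^{m-1}[B^{l-s+1/p,p'}(\partial D)]^k$, i.e. the space of tuples $\oplus_{l=0}^{m-1}g_l$ with $g_l\in[B^{l-s+1/p,p'}(\partial D)]^k$ supported in $\overline\Gamma$. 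Hence, via the trace operators \eqref{eq.trace.B_j}, \eqref{eq.traces.int}, such a $\phi$ kills the solvable data exactly when
\begin{equation*}
f(u):=\sum_{l=0}^{m-1}\langle B_lu,g_l\rangle_{\partial D}=0\quad\text{for all }u\in S_A^{s,p}(D),
\end{equation*}
and the goal becomes to conclude $g_0=\dots=g_{m-1}=0$.

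The key object will be the Green-type potential $v:=\tilde{\mathcal G}\big(\oplus_{l=0}^{m-1}g_l\big)$ of \eqref{eq.G*} (with parameters $(m-s,p')$ in place of $(s',q)$). Since every $g_l$ is supported in the compact set $\overline\Gamma\subsetneq\partial D$, for each $y$ outside $\overline\Gamma$ the kernel $B_{l,x}\Phi(x,y)$ is smooth in $x$ near $\mathrm{supp}\,g_l$, so $v$ should be one and the same solution of $A^*$ on the open set $X\setminus\overline\Gamma$, which is connected because $\overline\Gamma$ is a compact piece of the $C^\infty$ hypersurface $\partial D$ with $\partial\Gamma$ smooth. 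Then I would feed into $f$ the columns $\Phi_r(\cdot,y)$ of the matrix $\Phi(\cdot,y)$ for $y\in X\setminus\overline D$ (they lie in $S_A(\overline D)\subset S_A^{s,p}(D)$); comparing with the defining formula \eqref{eq.G*} of $\tilde{\mathcal G}$ shows that the $r$-th component of $v(y)$ equals $\pm f(\Phi_r(\cdot,y))$, hence $v\equiv0$ on the non-empty open set $X\setminus\overline D$. By the Unique Continuation Property of $A^*$ and the connectedness of $X\setminus\overline\Gamma$ this forces $v\equiv0$ on all of $X\setminus\overline\Gamma$; in particular $v$ vanishes on $D$ and on $X\setminus\overline D$.

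To finish, I would invoke the weak jump formula \eqref{eq.weak.jump.*} for $\tilde{\mathcal G}(\oplus_{l=0}^{m-1}g_l)$: for every test tuple $\oplus_{i=0}^{m-1}\psi_i\in\oplus_{i=0}^{m-1}[C^\infty(\partial D)]^k$ and each $i$,
\begin{equation*}
\langle\psi_i,\,(C^A_{m-i-1}v)^-_{|\partial D}-(C^A_{m-i-1}v)^+_{|\partial D}\rangle_{\partial D}=\langle\psi_i,g_i\rangle_{\partial D}.
\end{equation*}
As $v$ vanishes identically both on $D$ and on $X\setminus\overline D$, both boundary terms on the left are zero, so $\langle\psi_i,g_i\rangle_{\partial D}=0$ for all $\psi_i$ and all $i$, whence $g_i=0$, $0\le i\le m-1$. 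Thus $\mathcal D^*$ contains no nonzero functional annihilating the solvable data, and Problem \ref{pr.Cau} is densely solvable.

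I expect the two delicate steps to be (i) the identification of $\mathcal D^*$ with boundary densities supported in $\overline\Gamma$ --- essentially the quotient/subspace duality for Besov spaces on the relatively open piece $\Gamma$, where one must be careful about the convention for $\tilde B^{s-l-1/p,p}(\Gamma)$ at negative smoothness --- and (ii) the claim that $v$ propagates as a single $A^*$-solution across the ``free'' part $\partial D\setminus\overline\Gamma$ of the boundary together with the connectedness of $X\setminus\overline\Gamma$; this is precisely where the geometric assumptions on $\Gamma$ are used. Everything else is the surjectivity/injectivity mechanism behind Theorem \ref{t.Groth.sp} and the jump formula \eqref{eq.weak.jump.*}.
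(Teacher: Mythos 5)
Your proposal is correct and follows essentially the same duality--plus--jump-formula strategy as the paper (identify the dual of the quotient Besov space, form the Green type potential $\tilde{\mathcal G}(\oplus g_l)$, show it vanishes on $X\setminus\overline D$ by pairing against $\Phi_r(\cdot,y)$, propagate the vanishing across $\partial D\setminus\overline\Gamma$ by UCP and connectedness of $X\setminus\overline\Gamma$, then kill the densities via the weak jump relation \eqref{eq.weak.jump.*}). The only notable difference is that the paper makes the support step explicit with a cutoff $\varphi\in C^\infty(\partial D)$ equal to $1$ near $\overline\Gamma$ and vanishing on an open $U\subset\partial D\setminus\overline\Gamma$, replacing $v_l$ by $\varphi v_l$ via \eqref{eq.factor} before forming the potential, which is exactly the technical device that makes your ``supported in $\overline\Gamma$'' and ``single $A^*$-solution on $X\setminus\overline\Gamma$'' claims clean.
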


\begin{proof} It is known that the dual $({\mathcal B}/ {\mathcal B}_0)^*$  for the factor space 
${\mathcal B}/ {\mathcal B}_0$ for a closed subspace ${\mathcal B}_0$ in ${\mathcal B}$ can be treated 
as subspace of ${\mathcal B}^*$ consisting of functionals vanishing on ${\mathcal B}_0$, see 
\cite[Ch.4]{Shaef}. Hence $(\oplus_{l=0}^{m-1} \tilde B^{s-l-1/p,p} (\Gamma))^*$ may be identified  as 
the subspace in  $\oplus_{l=0}^{m-1} B^{l+1/p-s,p} (\partial D)$ with elements $v_l$ satisfying 
\begin{equation} \label{eq.factor}
\sum _{l=0}^{m-1}\langle v_l, u_l \rangle_{\partial D} = 0 \mbox{ for all } 
\oplus_{l=0}^{m-1} u_l \in 
\oplus_{l=0}^{m-1} B^{s-l-1/p,p}_\Gamma (\partial D). 
\end{equation}
By the Hahn--Banach Theorem, the mapping 
$$
\oplus _{l=0}^{m-1} B_l : S_A^{s,p} (D) \to 
\oplus_{l=0}^{m-1} \tilde B^{s-l-1/p,p} (\Gamma)
$$ 
has dense range if and only if 
any functional from $\big(\oplus_{l=0}^{m-1} \tilde B^{s-l-1/p,p} (\Gamma)\big)^*$,  vanishing 
on $\oplus _{l=0}^{m-1} B_lu$ for any   $u \in S_A^{s,p} (D)$, is identically zero. So, 
let $\oplus_{l=0}^{m-1} v_l \in \oplus_{l=0}^{m-1} B^{l+1/p-s,p} (\partial D)$ satisfies 
\eqref{eq.factor} and 
\begin{equation*} %\label{eq.dense.1}
\sum _{l=0}^{m-1}\langle v_l, B_lu  \rangle_{\partial D} = 0 \mbox{ for all } 
u \in S_A^{s,p} (D).
\end{equation*}
Then, for any function $\varphi \in C^{\infty} (\partial D)$ being equal to $1$ on $\overline \Gamma$ 
and vanishing on an open set $U\subset \partial D\setminus \overline \Gamma$, we have 
$(1-\varphi) \oplus_{l=0}^{m-1} B_l u \in 
\oplus_{l=0}^{m-1} B^{s-l-1/p,p}_\Gamma (\partial D)$, i.e. 
relation 
\eqref{eq.factor} yields
\begin{equation} \label{eq.dense.1}
\sum _{l=0}^{m-1}\langle \varphi v_l, B_lu  \rangle_{\partial D} = 0 \mbox{ for all } 
u \in S_A^{s,p} (D).
\end{equation} 
Then, as we have seen proving the surjectivity in Theorem \ref{t.Groth.sp} 
(formulae \eqref{eq.extension}, \eqref{eq.f.surj}) 
\begin{equation} \label{eq.dense.2}
 \sum _{l=0}^{m-1}\langle \varphi v_l, B_lu  \rangle_{\partial D} = 
\langle (\tilde{\mathcal G}(\oplus_{j=0}^{m-1} \varphi v_j))^+, u\rangle _{{\mathrm Gr}} 
\mbox{ for all } 
u \in S_A^{s,p} (D).
\end{equation}  
Hence, combining \eqref{eq.dense.1}, \eqref{eq.dense.2}
and  the injectivity part of Theorem \ref{t.Groth.sp} we conclude that 
$(\tilde{\mathcal G}(\oplus_{j=0}^{m-1} \varphi v_j))^+=0 $ in $X\setminus D$. But, as  the function 
$\varphi$ vanishes on an open set $U\subset \partial D\setminus \overline \Gamma$ 
and $X\setminus D$ is connected, 
we see that $\tilde{\mathcal G}(\oplus_{j=0}^{m-1} \varphi v_j)) = 0$ on $X \setminus 
\overline \Gamma$. In particular, jump formula \eqref{eq.weak.jump.*} for the integral 
$\tilde{\mathcal G}(\oplus_{j=0}^{m-1} \varphi v_j))$ yields $\oplus_{j=0}^{m-1} \varphi v_j = 0$ on 
$\partial D$, and then $\oplus_{j=0}^{m-1}  v_j = 0$ on $\partial D$ by \eqref{eq.factor},  
that was to be proved.
\end{proof}

Of course, the regularization methods are very useful for this problem, \cite{Tikh77}, \cite{LvRShi}.
In order to define a 'regularization' of Problem \ref{pr.Cau} 
with the use of Problem \ref{pr.A.s.p.D} and 
Theorem \ref{t.sparse.s.p.D.fund.vj}, 
we have to define a suitable mapping, 
related to the data. 

With this purpose, pick 
a system of linearly independent elements 
\begin{equation} \label{eq.vj.weak}
w^{(\cdot)}_N = \{ \oplus_{l=0}^{m-1} w^{(i)}_l \} _{i=1}^{N}  
 \subset \big(\oplus_{l=0}^{m-1}[\tilde B^{s-l-1/p,p} (\Gamma)]^k)\big)^*.
\end{equation}

Then we define the mapping ${\mathcal M}= {\mathcal M}_{kNm} : S^{s,p}_A ( D) \to {\mathbb R}^{mkN}$  similarly to \eqref{eq.M.vj}: 
\begin{equation} \label{eq.M.vj.weak}
{\mathcal M} u= ( \oplus_{l=0}^{m-1}\langle B_l u , w^{(1)}_l \rangle_{\partial D}, \dots 
\oplus_{l=0}^{m-1}\langle B_l u , w^{(N)}_l \rangle _{\partial D}),\, u \in S^{s,p}_A ( D). 
\end{equation}

\begin{cor} \label{c.sparse.s.p.D.fund.vj.weak} 
Let $w^{(\cdot)}_N$ be a linearly independent system \eqref{eq.vj.weak} in the space 
$\big(\oplus_{l=0}^{m-1}[\tilde B^{s-l-1/p,p} (\Gamma)]^k \big)^* $ 
and let ${\mathcal M}$ be given by \eqref{eq.M.vj.weak}. 
Then, under the hypothesis of Proposition \ref{p.exist.s.p.D}, 
there is a sparse minimizer $u^{\sharp}_{kNm}$ to Problem \ref{pr.A.s.p.D} in the form  \eqref{eq.sparse.u}  
with  some points $\{ y^{(l,j)} \}_{1\leq j \leq N \atop 
0 \leq l \leq m-1} $ from $X \setminus D$ and $\{ \vec{C}^{(l,j)} \}_{1\leq j \leq N \atop 
0 \leq l \leq m-1} $  from ${\mathbb R}^k$. 
\end{cor}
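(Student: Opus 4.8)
The plan is to reduce the statement to Theorem~\ref{t.sparse.s.p.D.fund.vj} by exhibiting the mapping $\mathcal{M}$ of \eqref{eq.M.vj.weak} in the form \eqref{eq.M.vj}, that is, by identifying the boundary functionals $u\mapsto \oplus_{l=0}^{m-1}\langle B_l u, w^{(i)}_l\rangle_{\partial D}$ with a linearly independent system inside $(S^{s,p}_A(D))^*\cong \tilde S^{m-s,p'}_{A^*}(\hat X\setminus\overline D)$ (Theorem~\ref{t.Groth.sp}). First, exactly as in the proof of Proposition~\ref{p.densely}, the dual $\big(\oplus_{l=0}^{m-1}[\tilde B^{s-l-1/p,p}(\Gamma)]^k\big)^*$ is realised as the space of boundary functionals annihilating $\oplus_{l=0}^{m-1}[B^{s-l-1/p,p}_{\Gamma}(\partial D)]^k$, i.e. supported in $\overline\Gamma$; accordingly each $w^{(i)}_l$ may be regarded as a boundary distribution supported in $\overline\Gamma$, and $\langle B_l u, w^{(i)}_l\rangle_{\partial D}$ is unchanged when $B_l u$ is taken as the full (weak) boundary value on $\partial D$. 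Then, repeating the passage from \eqref{eq.extension} to \eqref{eq.f.surj} in the surjectivity part of Theorem~\ref{t.Groth.sp}, the functional $u\mapsto \sum_{l}\langle B_l u, w^{(i)}_l\rangle_{\partial D}$ is represented through the Grothendieck pairing by $v^{(i)}:=\tilde{\mathcal G}\big(\oplus_{l=0}^{m-1}w^{(i)}_l\big)^+\in \tilde S^{m-s,p'}_{A^*}(\hat X\setminus\overline D)$, so that $\mathcal{M}$ indeed has the form \eqref{eq.M.vj} attached to the system $\{v^{(i)}\}$ (and its components).

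The key step is the linear independence of $\{v^{(i)}\}_{i=1}^{N}$ in $\tilde S^{m-s,p'}_{A^*}(\hat X\setminus\overline D)$. Assume $\sum_i v^{(i)}\vec C_i$ is the zero element and set $\psi_l:=\sum_i w^{(i)}_l\vec C_i$, still supported in $\overline\Gamma$. By linearity, $\sum_i v^{(i)}\vec C_i$ is the restriction to $X\setminus\overline D$ of the Green integral $\tilde{\mathcal G}\big(\oplus_l\psi_l\big)(y)=-\sum_l\langle B_l\Phi(\cdot,y),\psi_l\rangle_{\partial D}$; since $\mathrm{supp}\,\psi_l\subset\overline\Gamma$, this integral is a well-defined $A^*$-solution on the whole of $X\setminus\overline\Gamma$, not merely on $X\setminus\overline D$. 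As in the proof of Proposition~\ref{p.densely}, $X\setminus\overline\Gamma$ is connected (the nonempty relatively open set $\partial D\setminus\overline\Gamma$ joins $D$ with the connected set $X\setminus\overline D$), so the UCP for $A^*$ together with the vanishing of $\tilde{\mathcal G}(\oplus_l\psi_l)$ on $X\setminus\overline D$ forces $\tilde{\mathcal G}(\oplus_l\psi_l)\equiv 0$ on $X\setminus\overline\Gamma$, in particular on $D$ (note $D\subset X\setminus\overline\Gamma$ because $\overline\Gamma\subset\partial D$). Hence both restrictions $\tilde{\mathcal G}(\oplus_l\psi_l)^-$ and $\tilde{\mathcal G}(\oplus_l\psi_l)^+$ vanish, and the weak-jump formula \eqref{eq.weak.jump.*} for this Green integral yields $\psi_l=C^A_{m-l-1}\tilde{\mathcal G}(\oplus_\nu\psi_\nu)^- - C^A_{m-l-1}\tilde{\mathcal G}(\oplus_\nu\psi_\nu)^+=0$ on $\partial D$ for every $l$. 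Consequently $\sum_i(\oplus_l w^{(i)}_l)\vec C_i=0$ in the dual space, and the linear independence of $w^{(\cdot)}_N$ gives $\vec C_i=0$ for all $i$.

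Once this is established, $\mathcal{M}$ is of the form covered by Theorem~\ref{t.sparse.s.p.D.fund.vj}, so — provided (as in that theorem) $F$ is continuous at $\mathcal{M}\hat u$ for some $\hat u\in S^{s,p}_A(D)$, which holds automatically when $F$ is of the type \eqref{eq.F.norm} — it delivers a sparse minimizer $u^{\sharp}_{kNm}$ of the form \eqref{eq.sparse.u} with points $y^{(l,j)}\in X\setminus D$ and vectors $\vec C^{(l,j)}\in\mathbb{R}^{k}$, $1\leq j\leq N$, $0\leq l\leq m-1$.

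The main obstacle is the linear-independence step: it requires keeping track of the Cauchy-data (factor-space) structure and, above all, rests on the geometric observation that the Green integral $\tilde{\mathcal G}$ of a datum supported in $\overline\Gamma$ solves $A^*$ on the enlarged open set $X\setminus\overline\Gamma$ rather than merely on $X\setminus\overline D$ — this is exactly what allows the UCP to propagate the vanishing into $D$ and then, via the jump relation \eqref{eq.weak.jump.*}, back onto the boundary datum. Apart from this point, already present in the proof of Proposition~\ref{p.densely}, everything is a direct transcription of the proof of Corollary~\ref{c.sparse.s.p.D.fund.vj.weak.Dir}, with the role played there by uniqueness for the homogeneous Dirichlet problem taken over here by this unique-continuation argument.
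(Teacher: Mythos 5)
Your proof is correct and follows essentially the same route as the paper's own: identify the boundary functionals $\oplus_l\langle B_l\,\cdot\,,w^{(i)}_l\rangle_{\partial D}$ with $v^{(i)}=\tilde{\mathcal G}(\oplus_l w^{(i)}_l)^+\in\tilde S^{m-s,p'}_{A^*}(\hat X\setminus\overline D)$, prove linear independence by noting that the Green integral of a boundary datum supported in $\overline\Gamma$ defines an $A^*$-solution on the larger connected open set $X\setminus\overline\Gamma$ so that UCP propagates the vanishing from $X\setminus\overline D$ into $D$ and the jump relation \eqref{eq.weak.jump.*} then forces the datum to vanish, and finally invoke Theorem~\ref{t.sparse.s.p.D.fund.vj}. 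The only cosmetic difference is that the paper implements the support observation via a cutoff $\varphi$ equal to $1$ on $\overline\Gamma$ and vanishing on an open $U\subset\partial D\setminus\overline\Gamma$, whereas you appeal directly to $\mathrm{supp}\,\psi_l\subset\overline\Gamma$, which is the same fact.
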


\begin{proof} As we have seen in the proof of Proposition \ref{p.densely}, see formula 
\eqref{eq.dense.2},
$$
\oplus_{l=0}^{m-1}\langle B_l u , w^{(i)}_l \rangle_{\partial D} = 
\oplus_{l=0}^{m-1}\langle B_l u ,\varphi w^{(i)}_l \rangle_{\partial D} = 
 \oplus_{l=0}^{m-1} \langle B_l u , 
\tilde {\mathcal G} (\oplus _{\nu=0}^{m-1} \varphi w^{(i)}_\nu)^+ 
\rangle_{\partial D}
$$
for any function $\varphi \in C^{\infty} (\partial D)$ being equal to $1$ on $\overline \Gamma$ 
and vanishing on an open set $U\subset \partial D\setminus \overline \Gamma$, 
and hence $\langle\oplus_{l=0}^{m-1}\langle B_l u , w^{(i)}_l \rangle_{\partial D}$ can be identified 
within $\tilde S_{A^*} ^{m-s,p'}(\hat X\setminus \overline D)$ as  
$$
\oplus_{l=0}^{m-1}\langle B_l u , v^{(i)}_l \rangle_{{\rm Gr}} \mbox{ with }
v^{(i)} = \tilde {\mathcal G} (\oplus_{\nu=0}^{m-1} \varphi w^{(i)}_\nu)^+.
$$
As $\varphi w^{(i)}$ vanishes on $U$, 
we see that $v^{(i)} \in S_{A^*} (\hat X\setminus (\partial D\setminus  U)) \cap \tilde S^{m-s,p'}_{A^*} 
(\hat X\setminus \overline D) \cap S^{m-s,p'}_{A^*} (D)$, $1\leq i \leq N$. Let us see that the system 
$\{v^{(i)}\}$ is linearly independent $\tilde S^{m-s,p'}_{A^*} 
(\hat X\setminus \overline D)$. Again, if we set   
$$
v (y) = 
\sum_{i=1}^{N} v^{(i)} \, \vec {C} _i
$$
with some vectors $\{\vec {C} _1 , \dots \vec {C} _N\} \subset {\mathbb R}^k$. As $X\setminus \overline D$ is 
connected then, by the Unique Continuation Property, if $v \equiv 0$ in $X\setminus \overline D$ then it 
vanishes on $X \setminus \overline\Gamma$. Now,  the jump theorems of 
Green type integrals $\tilde {\mathcal G} (\oplus_{\nu=0}^{m-1} \varphi w^{(i)}_\nu)$, see \eqref{eq.weak.jump}, 
we conclude that 
\begin{equation}\label{eq.jumps}
C^A_j (\tilde {\mathcal G} (\oplus_{\nu=0}^{m-1} \varphi w^{(i)}_\nu))^- - 
C^A_j (\tilde {\mathcal G} (\oplus_{\nu=0}^{m-1} \varphi w^{(i)}_\nu))^+  = w^{(i)}_j \mbox{ on }  \Gamma
\end{equation}
in the sense of weak boundary values. 
Therefore, 
$$
0 = \oplus_{j=0}^{m-1} C^A_j v^- - \oplus_{j=0}^{m-1} C^A_j v^+ = 
\sum_{i=1}^{N} w^{(i)} \, \vec {C} _i \mbox{ on }  \Gamma.
$$
As system \eqref{eq.vj.weak} is linearly independent in $\big(\oplus_{l=0}^{m-1}[\tilde B^{s-l-1/p,p} (\Gamma)]^k 
\big)^* $, we conclude that $\vec {C} _i = 0$ 
for all $1\leq i \leq N$, i.e. the system $\{v^{(i)} \}$ is linearly independent in 
$\tilde S^{m-s,p'}_{A^*} (\hat X\setminus \overline D)$. 
Thus, the statement follows from Theorem \ref{t.sparse.s.p.D.fund.vj}.
\end{proof}

Note that if $s>m-1+n/p$ then, by the Sobolev Embedding Theorem, $S_A^{s,p} (D) \subset [C^{m-1} (\overline D)]^k$ 
and  then $B_j u \in [C^{m-j-1} (\partial D)]^k$, i.e. we may calculate them point-wisely on $\Gamma$.
Then we define $ {\mathcal M}= {\mathcal M}_{kNm}  : S_A^{s,p} (D)\to {\mathbb R}^{kNm}$:
\begin{equation} \label{eq.M.points.Bj}
{\mathcal M} u= ( \oplus_{l=0}^{m-1} ( B_l u) (x^{(l,1)}) , \dots 
\oplus_{l=0}^{m-1}( B_l u) (x^{(l,N)}) \big) ,
\end{equation}
with a set $x^{(\cdot)}_N$ of some pair-wise distinct points $\{ x^{(l,j)} \}_{1\leq j \leq N \atop 
0 \leq l \leq m-1} \subset \overline \Gamma$.

\begin{cor} \label{c.sparse.s.p.D.fund.Bj} 
Let $s>m-1+n/p$ and ${\mathcal M}$ be given by \eqref{eq.M.points.Bj}.
Under the hypothesis of Proposition \ref{p.exist.s.p.D}, 
given pair-wise distinct points $\{ x^{(l,j)} \}_{1\leq j \leq N \atop 
0 \leq l \leq m-1}\subset \overline \Gamma$,
there is a sparse minimizer $u^{\sharp}_{kNm}$ to Problem \ref{pr.A.s.p.D} in the form  \eqref{eq.sparse.u}  
with  some points $\{ y^{(l,j)} \}_{1\leq j \leq N \atop 
0 \leq l \leq m-1} $ from $X \setminus D$ and $\{ \vec{C}^{(l,j)} \}_{1\leq j \leq N \atop 
0 \leq l \leq m-1} $  from ${\mathbb R}^k$. 
\end{cor}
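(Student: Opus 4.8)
The plan is to follow the template of Corollary \ref{c.sparse.s.p.D.fund.Bj.Dir}, adapting it from the well-posed Dirichlet setting to the ill-posed Cauchy setting, where only \emph{dense} solvability (Proposition \ref{p.densely}) is available in place of well-posedness. First I would record the two consequences of the hypothesis $s>m-1+n/p$: by the Sobolev Embedding Theorem $S_A^{s,p}(D)\subset[C^{m-1}(\overline D)]^k$, so the operators $B_l$, $0\leq l\leq m-1$, map $S_A^{s,p}(D)$ continuously into $[C^{m-l-1}(\partial D)]^k$; and, since $s-l-1/p>(n-1)/p$ for every $0\leq l\leq m-1$, the space $B^{s-l-1/p,p}(\partial D)$ embeds continuously into $C(\partial D)$, hence the point-evaluation at $x^{(l,j)}\in\overline\Gamma$ descends to a bounded functional on the quotient space $[\tilde B^{s-l-1/p,p}(\Gamma)]^k$ (a representative and any function vanishing on $\overline\Gamma$ agree at $x^{(l,j)}$). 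Consequently each map $u\mapsto (B_lu)_r(x^{(l,j)})$, $1\leq r\leq k$, is a bounded linear functional on $S_A^{s,p}(D)$, factoring through $\oplus_{l}B_l:S_A^{s,p}(D)\to\oplus_{l}[\tilde B^{s-l-1/p,p}(\Gamma)]^k$.

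The central step will be to show that the family $\{u\mapsto(B_lu)_r(x^{(l,j)})\}_{1\leq j\leq N,\ 0\leq l\leq m-1,\ 1\leq r\leq k}$ is linearly independent in $(S_A^{s,p}(D))^*$. Here the Dirichlet argument does not transfer verbatim, because there one uses well-posedness to realize a prescribed boundary test function by an actual solution, whereas for the Cauchy problem that is false. Instead I would argue on the boundary: a nontrivial linear relation among the above functionals produces a bounded functional $\Lambda$ on $\oplus_{l}[\tilde B^{s-l-1/p,p}(\Gamma)]^k$ that annihilates the range of $\oplus_l B_l$; by Proposition \ref{p.densely} that range is dense, so $\Lambda$ vanishes identically; then, evaluating $\Lambda$ on a tuple supported near a single point $x^{(l_0,j_0)}$, missing all the other points, and with prescribed nonzero value of the $r_0$-th component there, forces $c_{l_0,j_0,r_0}=0$, and running over all indices gives linear independence. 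This passage through the annihilator is where I expect the main difficulty: one must be careful that the point-evaluation functionals genuinely descend to the quotient Besov spaces $\tilde B^{s-l-1/p,p}(\Gamma)$ (exactly where $s>m-1+n/p$ enters) and that the required smooth test tuples can be chosen even when some $x^{(l_0,j_0)}$ lies on the relative boundary $\partial\Gamma$.

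With linear independence established, I would conclude exactly as in Corollary \ref{c.sparse.s.p.D.fund.Bj.Dir}: the mapping $\mathcal M$ of \eqref{eq.M.points.Bj} has the form \eqref{eq.M.vj.weak}, with the $\delta$-functionals at $\{x^{(l,j)}\}$ composed with the component selectors playing the role of the system \eqref{eq.vj.weak}, which is linearly independent in $\big(\oplus_{l}[\tilde B^{s-l-1/p,p}(\Gamma)]^k\big)^*$ by the same test-function argument. Corollary \ref{c.sparse.s.p.D.fund.vj.weak} then yields a sparse minimizer $u^\sharp_{kNm}$ to Problem \ref{pr.A.s.p.D} in the form \eqref{eq.sparse.u} with points $\{y^{(l,j)}\}_{1\leq j\leq N,\ 0\leq l\leq m-1}\subset X\setminus D$ and vectors $\{\vec C^{(l,j)}\}_{1\leq j\leq N,\ 0\leq l\leq m-1}\subset\mathbb R^k$. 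As an alternative route, one may instead invoke Theorem \ref{t.Groth.sp} to identify the linearly independent functionals with a linearly independent system in $\tilde S_{A^*}^{m-s,p'}(\hat X\setminus D)$ and apply Theorem \ref{t.sparse.s.p.D.fund.vj} directly, which gives the same $Nm$-term representation.
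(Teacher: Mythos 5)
Your approach is correct and is essentially the paper's own: establish linear independence of the boundary point--evaluation functionals by invoking the dense solvability of the Cauchy problem (Proposition \ref{p.densely}), then reduce to Corollary \ref{c.sparse.s.p.D.fund.vj.weak}. The paper phrases the density step as an explicit approximating sequence $\{u^{(\nu)}\}\subset S_A^{s,p}(D)$ with $\oplus_l B_l u^{(\nu)}$ converging to the jet concentrated at one point and passes to the limit, whereas you rephrase it as the resulting functional annihilating a dense range and hence vanishing on all test tuples; these are the same argument, with your preparatory remark that the Sobolev embedding $B^{s-l-1/p,p}(\partial D)\hookrightarrow C(\partial D)$ (for $s>m-1+n/p$) is what lets point evaluations descend to $\tilde B^{s-l-1/p,p}(\Gamma)$ being precisely the justification the paper uses implicitly when it passes to the limit.
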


\begin{proof} 
Follows from Corollary \ref{c.sparse.s.p.D.fund.vj.weak} 
in the same way, as 
Corollary \ref{c.sparse.s.p.D.fund.Bj.Dir}  
follows from Corollary 
\ref{c.sparse.s.p.D.fund.vj.weak.Dir}.
To see it we have to show that the composition of $\delta$-functionals, related to 
pair-wise distinct points $\{ x^{(l,j)} \}_{1\leq j \leq N \atop 
0 \leq l \leq m-1}\subset \overline \Gamma$, with action of differential operators
$B_j$, $0 \leq j \leq {m-1}$,  are linearly independent in 
$(S_A^{s,p} (D))^*$. 
Indeed, let 
$$
\sum_{j=1}^N  \sum_{l=0}^{m-1} c_{j,l} ( B_l u) (x^{(l,j)}) =0 \mbox{ for all } u \in S_A^{s,p} (D)
$$
with some real numbers $c_{j,l}$. Taking a test function $\varphi_{i_0,l_0} \in 
[C^\infty (\partial D)]^k$ supported in a neighbourhood $V(x^{(i_0, l_0)})$ of a fixed point 
$x^{(i_0,l_0)}$, satisfying $\{ x^{(i,l)}\}_{(i,l)\ne (i_0,l_0)} \not \in V(x^{(i_0,l_0)})$, and using 
the fact that Problem \ref{pr.Cau} is densely solvable we approximate
the jet $\oplus_{j=0}^{m-1} u_j$ with $u_j = 0$, $l\ne l_0$, $u_{l_0} = \varphi_{i_0,l_0}$ 
by a sequence $\{ \oplus_{l=0}^{m-1} B_l u^{(\nu )}\} $ with  $\{ u^{(\nu )} \} \subset S_A ^{s,p} (D)$. 
As we may alway choose 
$\varphi_{i_0,l_0} (x^{(i_0, l_0)}) \ne 0$ %for $r$-th component with 
we see that 
$$
0=\lim_{\nu \to \infty}\sum_{j=1}^N  \sum_{l=0}^{m-1} c_{j,l} ( B_l u^{(\nu )} ) (x^{(j,l)}) = c_{j_0,l_0} \varphi_{i_0,l_0} 
(x^{(i_0, l_0)}) 
$$
and hence $c_{j_0,l_0} = 0$ with arbitrary $(j_0,l_0)$. Thus, the statement of this corollary follows
 from  Corollary \ref{c.sparse.s.p.D.fund.vj.weak}.
\end{proof}

In this case operator equation \eqref{eq.M.infty} is equivalent to the following: given 
the set of pairwise distinct points $\{ x^{(i)}\}_{i=1}^{\infty} \subset \overline D$, and a vector 
$\vec{h}^{(0)} \in {\mathfrak M}$, 
$$\vec{h}^{(0)}= ( \oplus_{l=0}^{m-1}  \vec{h}_{1,l}, \dots 
\oplus_{l=0}^{m-1}  \vec{h}_{N,l}, \dots), \, \vec{h}_{i,l} \in 
{\mathbb R}^k,
$$ 
 find   $u \in S_A ^{s,p} (D)$ such that 
\begin{equation}
\label{eq.M.infty.Cau}
\oplus_{l=0}^{m-1} B_l u  (x^{(i,l)}) = \oplus_{l=0}^{m/2-1} \vec{h}_{i,l} \mbox{ for all } i\in \mathbb N.
\end{equation}

\begin{cor} \label{c.sparse.Cau.infty} 
Let $s>m-1+n/p$, ${\mathcal M}$ be given by \eqref{eq.M.points.Bj}, the functional $F$ be given by 
\eqref{eq.F.norm} and $\lim\limits_{N\to +\infty} b_N =0$. 
Under the hypothesis of Proposition \ref{p.exist.s.p.D}, %and \ref{p.convex.sparse.s.p.D}, 
let the pair-wise distinct points $\{ x^{(i,l)}\}_{i\in \mathbb N \atop 0\leq l \leq m-1} \subset 
\overline \Gamma$ be chosen in such a way that for each $0\leq l\leq m-1$ the set $\{ x^{(i,l)}\}_{i\in \mathbb N}$  
be dense in a relatively open subset $U$ of $\Gamma$. Then 1) operator equation \eqref{eq.M.infty.Cau}
can not have more than one solution $u^{(0)}_\infty$ in $S_A ^{s,p} (D)$; 2) if equation \eqref{eq.M.infty.Cau}
has a solution $u^{(0)}_\infty$ in $S_A ^{s,p} (D)$, then the sequence of minimizers 
$\{u^{(0)}_{kNm}\}$ to Problem \ref{pr.A.s.p.D} converges weakly to it in $S_A ^{s,p} (D)$; 3) if 
$ \oplus_{l=0}^{m-1} \vec{h}_{i,l} = \oplus_{l=0}^{m-1}  u_l (x^{(i,l)})$, $i \in \mathbb N$, for some 
set $\oplus_{l=0}^{m-1} u_l  \in \oplus_{l=0}^{m-1} [\tilde B^{s-l-1/p,p} (\Gamma)]^k $, 
$i \in \mathbb N$, and 
Problem \ref{pr.Cau} admits a solution $u^{(0)}$ for the data $\oplus_{l=0}^{m-1} u_l $, then the  sequence of 
minimizers $\{u^{(0)}_{kNm}\}$ converges to it.  
\end{cor}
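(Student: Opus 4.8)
The plan is to argue exactly as in the proofs of Corollaries \ref{c.sparse.s.p.D.infty} and \ref{c.sparse.Dir.infty}, the only genuinely new ingredient being the uniqueness theorem for the ill-posed Cauchy Problem \ref{pr.Cau}. The whole statement reduces to showing that the linear span of the functionals $f_{i,l}: u \mapsto (B_l u)(x^{(i,l)})$, $i\in\mathbb N$, $0\le l\le m-1$, on $S_A^{s,p}(D)$ is everywhere dense in $(S_A^{s,p}(D))^*$; since $S_A^{s,p}(D)$ is reflexive this is equivalent, by the Hahn--Banach theorem, to the implication: if $u\in S_A^{s,p}(D)$ and $f_{i,l}(u)=0$ for all $i$ and $l$, then $u\equiv 0$ in $D$.

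To establish this implication I would use the hypothesis $s>m-1+n/p$: by the Sobolev Embedding Theorem $S_A^{s,p}(D)\subset [C^{m-1}(\overline D)]^k$ and each operator $B_l$, $0\le l\le m-1$, maps it continuously into $[C^{m-l-1}(\partial D)]^k\subset [C(\partial D)]^k$. Hence $B_l u$ is a continuous function on $\partial D$ and, as $\{x^{(i,l)}\}_{i\in\mathbb N}$ is dense in the relatively open set $U\subset \Gamma$, the vanishing $(B_l u)(x^{(i,l)})=0$ for all $i$ forces $B_l u=0$ on $U$ for every $0\le l\le m-1$. Thus $u\in S_A^{s,p}(D)$ has zero Cauchy data $\oplus_{l=0}^{m-1} B_l u$ on the non-empty relatively open piece $U$ of $\partial D$, so by the uniqueness theorem for the Cauchy problem for elliptic operators with the UCP (see \cite[Theorem 2.8]{ShTaLMS}, \cite[Ch. 10]{Tark36}) we conclude $u\equiv 0$ in $D$. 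In particular the functionals $f_{i,l}$ are linearly independent and can be completed to a system $v^{(\cdot)}_\infty\subset (S_A^{s,p}(D))^*$ with everywhere dense linear span, so the hypotheses of Lemma \ref{l.M.infty} are fulfilled; this already proves item 1).

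Item 2) then follows immediately from Corollary \ref{c.infty}: the assumptions of Proposition \ref{p.exist.s.p.D} and Lemma \ref{l.M.infty} hold, $F=F_{kNm}$ has the form \eqref{eq.F.norm}, and $\lim_{N\to+\infty} b_N=0$; therefore, if equation \eqref{eq.M.infty.Cau} — which is the instance of \eqref{eq.M.infty} for the present choice of ${\mathcal M}$ given by \eqref{eq.M.points.Bj} — admits a solution $u^{(0)}_\infty\in S_A^{s,p}(D)$, then bound \eqref{eq.F.bound} holds and the minimizers $\{u^{(0)}_{kNm}\}$ of Problem \ref{pr.A.s.p.D} converge weakly to it in $S_A^{s,p}(D)$.

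Finally, for item 3), suppose $\oplus_{l=0}^{m-1}\vec h_{i,l}=\oplus_{l=0}^{m-1} u_l(x^{(i,l)})$ for a Cauchy datum $\oplus_{l=0}^{m-1} u_l\in \oplus_{l=0}^{m-1}[\tilde B^{s-l-1/p,p}(\Gamma)]^k$ for which Problem \ref{pr.Cau} has a solution $u^{(0)}\in [W^{s,p}(D)]^k$. Then $u^{(0)}\in S_A^{s,p}(D)$ and $B_l u^{(0)}=u_l$ on $\Gamma$; since $s>m-1+n/p$ both sides are continuous on $\overline\Gamma$, hence $(B_l u^{(0)})(x^{(i,l)})=u_l(x^{(i,l)})=\vec h_{i,l}$ for all $i\in\mathbb N$, i.e. $u^{(0)}$ solves \eqref{eq.M.infty.Cau}. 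By item 2) the sequence $\{u^{(0)}_{kNm}\}$ converges weakly to $u^{(0)}$ in $S_A^{s,p}(D)$ and, by Remark \ref{r.convergence}, also uniformly on compact subsets of $D$ together with all partial derivatives. The only genuinely nontrivial step in the whole argument is the uniqueness step of the second paragraph, where the continuity of the trace operators (guaranteed by the restriction $s>m-1+n/p$) together with the Cauchy uniqueness theorem are both needed to pass from vanishing on a countable dense subset of $U$ to vanishing identically in $D$; everything else is a routine invocation of Lemma \ref{l.M.infty} and Corollary \ref{c.infty}.
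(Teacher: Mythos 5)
Your proof proposal is correct and takes essentially the same route as the paper: reduce items 1) and 2) via Lemma \ref{l.M.infty} and Corollary \ref{c.infty} to showing that the linear span of the evaluation functionals $f_{i,l}$ is dense in $(S_A^{s,p}(D))^*$, use reflexivity and Hahn--Banach to rephrase this as an injectivity statement, pass (via $s>m-1+n/p$ and the density of the $x^{(i,l)}$ in $U$) to vanishing of the Cauchy data on $U$, and invoke the uniqueness theorem for the Cauchy Problem \ref{pr.Cau}; item 3) is then the observation that an actual Cauchy solution $u^{(0)}$ satisfies the interpolation conditions \eqref{eq.M.infty.Cau}, so item 2) applies. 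The only minor blemish is the phrase ``can be completed to a system $v^{(\cdot)}_\infty$ with everywhere dense linear span'': no completion is needed, since the $f_{i,l}$ themselves are the system (their linear independence having been established in the proof of Corollary \ref{c.sparse.s.p.D.fund.Bj} via dense solvability of Problem \ref{pr.Cau}), and density of their span is exactly what the Hahn--Banach step delivers; this does not constitute a gap.
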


\begin{proof} According to Lemma \ref{l.M.infty} and Corollary \ref{c.infty}, 
in order to prove the first and the second statement  we have to show that 
 the  linear envelope of functionals $\{ f_{i,l}  \}_{i\in \mathbb N \atop 0\leq l \leq m-1}$ induced by 
the operators $\{ ( B_l u) (x^{(i,l)}) \}_{i\in \mathbb N \atop 0\leq l \leq m-1}$ on 
the space $S_A ^{s,p} (D)$
is everywhere dense in $(S^{s,p}_A (D))^*$.
But, as the space $S^{s,p}_A (D)$ is reflexive, this is equivalent to the fact that an 
element $u \in  S^{s,p}_A (D)$, vanishing on $\{ f_{i,l}  \}_{i\in \mathbb N \atop 0\leq l \leq m-1}$
is identically zero.
On the other hand, 
if points $\{ x^{(i,l)}\}_{i=1}^{\infty} \subset \overline \Gamma$ are dense 
in a relatively open subset $U$ of $\Gamma$ for each $l$, $0\leq l \leq m-1$,
 then for any solutions $u$, satisfying $\oplus_{l=0}^{m-1} B_j u (x^{(i,l)})=0$ 
for all $i \in \mathbb N$ we see that $\oplus_{l=0}^{m-1} B_j u =0$ on $U$. 
Hence, by the Uniqueness Theorem for the Cauchy Problem \ref{pr.Cau}
see \cite{ShTaLMS}, \cite[Theorem 10.3.5]{Tark36}, $u \equiv  0$  in $D$. 

Finally, if Problem \ref{pr.Cau} admits (a unique) solution $u^{(0)} \in S_A ^{s,p} (D)$ 
for the data $\oplus_{l=0}^{m-1} u_l \in \oplus_{l=0}^{m-1} [\tilde B^{s-l-1/p,p} (\Gamma)]^k$, then 
related operator equation \eqref{eq.M.infty.Dir} is solvable for the data 
$ \oplus_{l=0}^{m-1} \vec{h}_{i,l} = \oplus_{l=0}^{m-1}  u_l (x^{(i,l)})$, $i \in \mathbb N$, and $u^{(0)}$ 
is its unique solution.
\end{proof}

Of course, we may do the same in the weak sense. In this case operator equation \eqref{eq.M.infty} is equivalent to the following: given a set 
$$
w^{(\cdot)}_\infty  = \{ \oplus_{l=0}^{m-1} w^{(i)}_l \} _{i \in \mathbb N}  
 \subset (\oplus_{l=0}^{m-1}[\tilde B^{s-l-1/p,p} (\Gamma)]^k)^*
$$  and a vector 
$\vec{h}^{(0)} \in {\mathfrak M}$, 
$$\vec{h}^{(0)}= ( \oplus_{l=0}^{m-1}  \vec{h}_{1,l}, \dots 
\oplus_{l=0}^{m-1}  \vec{h}_{N,l}, \dots), \, \vec{h}_{i,l} \in 
{\mathbb R}^k,
$$ 
 find   $u \in S_A ^{s,p} (D)$ such that 
\begin{equation}
\label{eq.M.infty.Cau.weak}
\oplus_{l=0}^{m-1}\langle B_l u , w^{(i)}_l \rangle_{\partial D}= \oplus_{l=0}^{m-1} \vec{h}_{i,l} \mbox{ for all } i\in \mathbb N.
\end{equation}

\begin{cor} \label{c.sparse.Cau.infty.weak} 
Let ${\mathcal M}$ be given by \eqref{eq.M.vj.weak} and the functional $F$ be given by 
\eqref{eq.F.norm} and  $\lim\limits_{N\to +\infty} b_N =0$. 
Under the hypothesis of Proposition \ref{p.exist.s.p.D}, 
let $w^{(\cdot)}_\infty $ be a linearly independent system with its linear envelope 
being dense in the space 
$(\oplus_{l=0}^{m-1} 
[\tilde B^{s-l-1/p,p} (\Gamma)]^k)^* $. 
Then 1) the related equation \eqref{eq.M.infty.Cau.weak} 
can not have more than one solution $u^{(0)}_\infty$ in $S_A ^{s,p} (D)$; 2) if 
equation \eqref{eq.M.infty.Cau.weak} has a solution $u^{(0)}_\infty$ in $S_A ^{s,p} (D)$ then the sequence of 
minimizers $\{u^{(0)}_{kNm}\}$ to Problem \ref{pr.A.s.p.D} converges weakly in $S_A ^{s,p} (D)$ to it; 
3) if $ \oplus_{l=0}^{m-1} \vec{h}_{i,l} = \oplus_{l=0}^{m-1}\langle u_l  , w^{(1)}_l \rangle_{\partial D}$
 for some set 
$\oplus_{l=0}^{m-1} u_l  \in \oplus_{l=0}^{m-1} [\tilde B^{s-l-1/p,p} (\Gamma)]^k $, $i \in \mathbb N$, and Problem \ref{pr.Cau} admits a solution $u^{(0)}$ for the data $\oplus_{l=0}^{m-1} u_l $, then the  sequence of minimizers $\{u^{(0)}_{kNm}\}$ converges to it.  
\end{cor}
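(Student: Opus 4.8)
The plan is to deduce the statement from the general weak-convergence mechanism already assembled in Corollary \ref{c.infty} together with Lemma \ref{l.M.infty}, in exactly the same way the point-evaluation Cauchy case (Corollary \ref{c.sparse.Cau.infty}) was handled; the only genuinely new point is a density check for the functionals built from the $w^{(i)}$. First I would note that Corollary \ref{c.sparse.s.p.D.fund.vj.weak} already identifies the mapping ${\mathcal M}$ of \eqref{eq.M.vj.weak} with one of the form \eqref{eq.M.vj} attached to the linearly independent system of solutions $v^{(i)}=\tilde {\mathcal G}(\oplus_{\nu=0}^{m-1}\varphi w^{(i)}_\nu)^+$ in $\tilde S^{m-s,p'}_{A^*}(\hat X\setminus D)$, and that the functional $F$ of \eqref{eq.F.norm} is everywhere continuous, so all hypotheses of Corollary \ref{c.infty} (and hence the existence of the minimizers $u^{(0)}_{kNm}$ and of their sparse representatives) are in force as soon as the relevant density is known.

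Next, for statements 1) and 2): since $S^{s,p}_A(D)$ is reflexive, it suffices to prove that any $u\in S^{s,p}_A(D)$ annihilated by every functional $u\mapsto \oplus_{l=0}^{m-1}\langle B_l u, w^{(i)}_l\rangle_{\partial D}$ vanishes identically in $D$. The key observation is that $\oplus_{l=0}^{m-1}B_l u$ lies in the Banach space $\oplus_{l=0}^{m-1}[\tilde B^{s-l-1/p,p}(\Gamma)]^k$, so an element of it annihilated by a family of functionals whose linear envelope is dense in the dual must itself be zero; hence $\oplus_{l=0}^{m-1}B_l u=0$ on $\Gamma$. Then the Uniqueness Theorem for the Cauchy Problem \ref{pr.Cau} (see \cite{ShTaLMS}, \cite[Theorem 10.3.5]{Tark36}), valid precisely because $A$ has the UCP, forces $u\equiv 0$ in $D$. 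Combining this with Lemma \ref{l.M.infty} (uniqueness for \eqref{eq.M.infty.Cau.weak}) and Corollary \ref{c.infty} (weak convergence of the minimizers) finishes 1) and 2).

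For statement 3), I would argue that if $\oplus_{l=0}^{m-1}\vec h_{i,l}=\oplus_{l=0}^{m-1}\langle u_l, w^{(i)}_l\rangle_{\partial D}$, $i\in\mathbb N$, for a datum $\oplus_{l=0}^{m-1}u_l$ for which Problem \ref{pr.Cau} possesses a solution $u^{(0)}\in S^{s,p}_A(D)$, then $\oplus_{l=0}^{m-1}B_l u^{(0)}=\oplus_{l=0}^{m-1}u_l$ on $\Gamma$, so that $\oplus_{l=0}^{m-1}\langle B_l u^{(0)}, w^{(i)}_l\rangle_{\partial D}=\oplus_{l=0}^{m-1}\vec h_{i,l}$ for every $i$ (the $w^{(i)}_l$ test only against the restriction to $\Gamma$). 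Thus ${\mathcal M}_\infty u^{(0)}=\vec h^{(0)}$, i.e. $u^{(0)}$ solves \eqref{eq.M.infty.Cau.weak}, and by 1) it is its unique solution; statement 2) then yields weak convergence of $\{u^{(0)}_{kNm}\}$ to $u^{(0)}$.

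I expect the only delicate point to be the bookkeeping in the density argument of the second paragraph: one must keep straight the two pictures of the test functionals — as elements of $(\oplus_{l=0}^{m-1}[\tilde B^{s-l-1/p,p}(\Gamma)]^k)^*$ and, under the Grothendieck pairing of Theorem \ref{t.Groth.sp}, as solutions $v^{(i)}\in \tilde S^{m-s,p'}_{A^*}(\hat X\setminus D)$ — and use reflexivity of the Besov spaces (valid for $1<p<+\infty$) to pass from ``dense linear envelope in the dual'' to ``trivial annihilator''. Everything else is a transcription of the Dirichlet weak case, Corollary \ref{c.sparse.Dir.infty.weak}.
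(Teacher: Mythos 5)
Your proof is correct and is exactly the argument the paper has in mind: the corollary is stated without an explicit proof, but the authors plainly expect the reader to transpose the proof of Corollary \ref{c.sparse.Cau.infty} (and the Dirichlet analogue, Corollary \ref{c.sparse.Dir.infty.weak}) using the weak identification from Corollary \ref{c.sparse.s.p.D.fund.vj.weak}. Your reduction of 1) and 2) via reflexivity of $S^{s,p}_A(D)$ to the trivial-annihilator condition, the passage $\oplus_{l=0}^{m-1}B_l u = 0$ on $\Gamma$ via density of the $w^{(i)}$ in the dual Besov space, and the invocation of Cauchy-problem uniqueness are precisely the intended steps; 3) is the routine verification that a Cauchy solution solves \eqref{eq.M.infty.Cau.weak}. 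One minor remark: the ``reflexivity of the Besov spaces'' you flag as the delicate point is not actually what drives the argument in the second paragraph — to conclude $\oplus B_l u = 0$ on $\Gamma$ from annihilation by a dense family in $(\oplus_{l}[\tilde B^{s-l-1/p,p}(\Gamma)]^k)^*$ you only need the Hahn--Banach separation theorem (and continuity of $f\mapsto\langle f, \oplus B_l u\rangle$ on the dual), valid in any Banach space; reflexivity is needed only for $S^{s,p}_A(D)$, which you already use correctly.
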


\subsection{Examples}
\label{s.ex} 

\begin{exmp} \label{ex.homogeneous}
Let $A$ be  an elliptic differential operator with constant coefficients on ${\mathbb R}^n$.  
As it is well known, both $A$ and its formal adjoint $A^*$ possess UCP because their solutions in a domain $D$ 
are actually real analytic there. In particular, the operator $A$ admits a fundamental solution 
of the convolution type, i.e. $\Phi (x,y) = \Phi (x-y)$, see, for instance, \cite[Theorem 8.5]{Rud_FA}, 
and moreover it can be chosen in a rational-exponential form.   

If, in addition, $A$ is  a homogeneous operator with constant coefficients, i.e.
$$
A=\sum_{|\alpha|= m}a_\alpha \partial^\alpha,
$$ 
then the standard fundamental solution for $A$ has the following form: 
$$
\Phi (x,y) = a\Big( \frac{x-y}{|x-y|}\Big) |x-y|^{m-n}+ b(x-y) \ln|x-y|, 
$$
where $a (\zeta)$ is a $(k\times k)$-matrix of real analytic function over the unit sphere in ${\mathbb R}^n$ and 
$b (\zeta)$ is a $(k\times k)$-matrix of homogeneous polynomials of degree $(m-n)$ that is identically zero for odd 
$n$, see \cite[\S 2.2.2]{Tark37}. Note that, in this case, for $m<n$ and a bounded domain $D$, a solution     
$v \in S_{A^*} ({\mathbb R}^n \setminus \overline D)$ is regular at the infinity with respect to $\Phi$ 
if it vanishes at the infinity: $\lim_{|x| \to + \infty} v(x) = 0$.
\end{exmp}

\begin{exmp} \label{ex.CR}
Let $A$ be the Cauchy-Riemann  operator $\overline \partial$ on 
on  ${\mathbb R}^2 \cong {\mathbb C}$. In the complex form we  have 
$$\overline \partial  = \frac{\partial}{\partial \overline z} = \frac{1}{2}
\Big(\frac{\partial}{\partial x} + \iota \frac{\partial}{\partial y} \Big),
$$ 
where $z=x_1+\iota x_2$, $\zeta=y_1+\iota y_2$ and $\iota$ is the imaginary unit. Of course, we  may easily 
decomplexify it to a $(2\times 2)$ system of PDE's. The space $S_{\overline \partial} (D)$ consists of holomorphic 
functions in domain $D$; in particular, the space contains polynomials of variable $z$ of any degree.  
Both $\overline \partial$ and its formal adjoint $\overline \partial^*$ possess UCP 
(even more strong Uniqueness Theorem holds for them). 
Then $\Phi (x,y)= \frac{1}{\pi (\zeta-z)}$ is the standard 
fundamental solution for $\overline \partial$ and the related second Green formula is just the standard Integral 
Cauchy Formula for holomorphic functions.  Again, a holomorphic function  $v$ on  ${\mathbb C} \setminus \overline D$ 
is regular at the infinity with respect to $\Phi$ if it vanishes at the infinity,  
see \cite{Grot2}, \cite{Koth2}, \cite{Silva}.

Since  a very strong uniqueness theorem holds true for holomorphic functions,  
then one may obtain a more strong statement on the determinant
$\det W_N (v^{(\cdot)}, y^{(\cdot)})$ than in Lemma \ref{l.non.degen.vj}. Namely, if
 $v^{(i)} (\zeta)= \frac{1}{2\pi \iota (\zeta-z^{(i)})} $, then, 
for fixed pair-wise distinct points $\{ z^{(1)},z^{(2)}, \dots z^{(N)}) \} \subset D$  the determinant 
$\det W_N (v^{(\cdot)},\zeta^{(\cdot)})$ does not vanish  
  if the points $\{ \zeta^{(1)},\zeta^{(2)}, \dots \zeta^{(N)} \} \subset {\mathbb C} \setminus \overline D $ are 
pair-wise distinct. 

The sparse minimizing with given points in a domain $D$ resembles the situation  with 
the approximation by 'simplest fractions', see \cite{DaDa}, closely related to the so-called 
Pad\'e approximation for holomorphic functions, see \cite{BG-M}.

The Cauchy problem for the Cauchy-Riemann system is known since T. Carleman \cite{Carl}, 
see also book \cite{Aiz} and its bibliography. It naturally appears in Aerodynamics and the Radio Physics.   
\end{exmp}

\begin{exmp} \label{ex.harm}
Let $A$ be the Laplace operator $\Delta = \sum_{j=1}^n \frac{\partial ^2}{\partial x_j^2}$  on ${\mathbb R}^n$; 
its solutions over a domain $D$ are harmonic functions there. In particular, the space of solutions 
contains homogeneous harmonic polynomials of any degree. The standard fundamental solution for $\Delta$ has 
the following form: 
$$
\Phi (x,y) = 
\left\{
\begin{array}{lll} \frac{|x-y|^{2-n}}{(2-n)\sigma_n} , & n> 2,\\
\frac{1}{2\pi} \ln{|x-y|}, & n=2, \\
\end{array}
\right.
$$
where $\sigma_n$ is the square of the unit sphere in ${\mathbb R}^n$. The related second Green formula is just the 
standard Green formula for harmonic functions with $B_0 =1 = C^\Delta_0$, $B_1 =
\frac{\partial }{\partial \nu} = C^\Delta_1$, where $\frac{\partial }{\partial \nu}$ is the normal 
derivative with respect to $\partial D$. 

 A harmonic function  $v$ on  ${\mathbb R}^n \setminus \overline D$, $n\geq 3$,
is regular at the infinity with respect to $\Phi$ if it vanishes at the infinity; for $n=2$ 
it is a function of type \eqref{eq.Groth.reg.inf}.
 
Note that the determinant of matrix 
\eqref{eq.W.points} may vanish on 'thin' sets in $X\setminus \overline D$. 
Indeed, if $n=3$ then $\Phi (x,y)= \frac{1}{4\pi}|x-y|^{-1}$. If $D$ is the unit ball then 
for $N=2$ and the points $x^{(1)} = (0,0,0)$, $x^{(2)} = (1/2,0,0)$, 
 $y^{(1)} = (2,2,0)$,  we have 
$$
v^{(1)} (y) = (4\pi |x^{(1)} - y|)^{-1} = (4\pi |y|)^{-1}   , \quad 
v^{(2)} (y) = (4\pi |x^{(2)} - y|)^{-1} .
$$
In particular,  
$$
|x^{(1)} - y^{(1)}| = 2\sqrt{2}, \, |x^{(2)} - y^{(1)}| = 5/2.
$$
Solving the equation 
$$
5/2 |x^{(1)} - y|   = 
2\sqrt{2}  |x^{(2)} - y| 
$$
we obtain for $y = y^{(2)}$:
%$$
%25 y_1^2 + 25 y_2^2 +25 y^2_3= 32 (y_1-1/2)^2 + 32 y_2^2 +32 y_3^2 ,
%$$
%or, the same
\begin{equation} \label{eq.sphere}
 y_1^2 +  y_2^2 + y_3^2  +y_3-32/7 y_1 + 8 /7= 0 = (y_1-16/7)^2 + y_2^2 + y_3^2 +8 /7 - (16 /7)^2.
\end{equation}
Thus, the columns of the matrix $W_2 (v^{(\cdot)}, y^{(1)},y^{(2)})$ are proportional  on sphere 
\eqref{eq.sphere}  with respect to $y^{(2)}$ for fixed points $x^{(1)}, x^{(2)}, y^{(1)}$ and 
hence $\det W_N (y^{(1)}, y^{(2)})$ vanishes on it; in particular, 
$\det W_2 (y^{(1)}, y^{(2)})$ equals to zero at the point 
$y^{(2)}=(2,-2,0)$.

The Cauchy Problem for the Laplace equation is a classical example of an ill-posed problem 
of Mathematical Physics since J. Hadamard. 
In our context it reads as follows: 
given data $u_0 \in \tilde B^{s-1/p,p} (\Gamma)$, $u_1 \in \tilde B^{s-1- 1/p,p} (\Gamma)$
find $u \in W^{s,p} (D)$ such that 
\begin{equation*}
\left\{ 
\begin{array}{lccc}
\Delta u=0 & in & D, \\
 u  =  u_0  & on & \Gamma,  \\
\frac{\partial u}{\partial \nu}  =  u_1  & on & \Gamma,
\end{array}
\right.
\end{equation*}
see \cite{LvRShi}, \cite{Sh92A}, \cite{ShTaLMS}.

The Dirichlet Problem for the Laplace equation plays an essential role as a model example of 
a well-posed problem of Mathematical Physics. 
In our context it reads as follows: 
given datum $u_0 \in B^{s-1/p,p} (\partial D)$, 
find $u \in W^{s,p} (D)$ such that 
\begin{equation*}
\left\{ 
\begin{array}{lcc}
\Delta u=0 & in & D, \\
 u  =  u_0  & on & \partial D,  \\
\end{array}
\right.
\end{equation*}
see, for instance, \cite{EgShu}, \cite{LiMa72}, \cite{ShTaBDRK}.
\end{exmp}

%\begin{exmp} \label{ex.polyharm}
%Let $A$ be a power of the Laplace operator, i.e. $A=\Delta^r$  on ${\mathbb R}^n$ with $r \in \mathbb N$; 
%its solutions over a domain $D$ are polyharmonic functions there. In particular, the space of solutions 
%contains any polynomial of degree less than $2r$. The standard fundamental solution for $\Delta^r$ has 
%the following form: 
%$$
%\Phi (x,y) = 
%\left\{
%\begin{array}{lll} c_{n,r}|x-y|^{2r-n} , & r-n/2 \not \in {\mathbb Z}_0,\\
% \textcolor{red}{c_{n,r}|x-y|^{2r-n}+ \tilde c_{n,r}|x-y|^{2r-n} \ln|x-y|}, & r-n/2  \in {\mathbb Z}_0, \\
%\end{array}
%\right.
%$$ 
%with some constants $c_{n,r}, \tilde c_{n,r}$. 
%
%A polyharmonic function  $v$ on  ${\mathbb R}^n \setminus \overline D$, $n>2r$,
%is regular at the infinity with respect to $\Phi$ if it vanishes at the infinity; 
%for $n\leq 2r$ it is a function of type \eqref{eq.Groth.reg.inf}. 
%\end{exmp}

%\begin{exmp} \label{ex.Helm}
%Helmholtz operator.
%\end{exmp}

\section{Conclusion}

Thus, we considered a minimization problem (with
finite-dimensional data) in Sobolev spaces of solutions to an  elliptic system 
in a bounded domain. Using the Grothendieck type dualities we prove theorems on the existence of sparse minimizer 
as a linear combination of a suitable fundamental solution to the elliptic system at different points 
in the complement of the domain with respect to the 'interior variable'. The case where 
the number of data passes to infinity were also discussed. Some applications 
to  problem related to neural networks, to the ill-posed Cauchy problem 
for elliptic operators  and to the well-posed Dirichlet problem for strongly elliptic operators were considered.

%\bigskip
\bigskip

{\sc Acknowledgments.} 
This work was supported by the Krasnoyarsk Mathematical Center and financed by the Ministry of Science and Higher Education of the Russian Federation (Agreement No. 075-02-2025-1790).

%\section{Declarations}
%
%\subsection{Funding}
%
%This work was supported by the Krasnoyarsk Mathematical Center and financed by the Ministry of Science and Higher 
%Education of the Russian Federation (Agreement No. 075-02-2025-1790). 

%\subsection{Conflict of Interest}
%
%The authors have no conflicts to disclose.
%
%\subsection{Author Contributions}
%
%{$\quad$}
%
%{\it A. A. Shlapunov}: Conceptualization (lead); Formal analysis (lead); Methodology (lead); Project administration 
%(lead); Writing – original draft (lead); Writing – review and editing (equal). 
%
%{\it A. N. Polkovnikov}: Formal analysis (supporting); Writing – original draft (supporting); Writing –
%review and editing (equal). 
%
%{\it K. V. Gagelgans}:  Formal analysis (supporting); Writing –
%original draft (supporting); Writing – review and editing (equal).
%
%\subsection{Data availability}
%
%Data sharing is not applicable to this article as no new data were created or analyzed in this study.

\end{document}